\documentclass[11pt,a4paper]{amsart}
\usepackage{enumerate}
\usepackage{hyperref}
\usepackage{fullpage}
\usepackage{xcolor}

\title{Existential fragments of theories of\\henselian valued fields}
\author{Sylvy Anscombe}
\author{Arno Fehm}
\address{Universit\'{e} Paris Cit\'{e}, Sorbonne Universit\'{e}, CNRS, IMJ-PRG, F-75013 Paris, France}
\email{sylvy.anscombe@imj-prg.fr}
\address{Institut f\"{u}r Algebra, Technische Universit\"{a}t Dresden, 01062 Dresden, Germany}
\email{arno.fehm@tu-dresden.de}
%\date{\today}

\theoremstyle{plain}
\newtheorem{theorem}{Theorem}[section]
\newtheorem{corollary}[theorem]{Corollary}

\newtheorem{proposition}[theorem]{Proposition}
\newtheorem{lemma}[theorem]{Lemma}

\theoremstyle{remark}

\newtheorem{remark}[theorem]{Remark}

\newcommand{\Rfour}{$(\mathrm{R}4)$}
\newcommand{\RfourN}{$(\mathrm{R}4_n)$}
\newcommand{\RfourNplusone}{$(\mathrm{R}4_{n+1})$}
\newcommand{\Rfourfour}{$(\mathrm{R}4_4)$}

\newcommand{\Hen}{H^{e\prime}}
\newcommand{\Hpi}{H^{\varpi}}
\newcommand{\Hepi}{H^{e,\varpi}}

\newcommand\Lang{\mathfrak{L}}
\newcommand\Lring{\Lang_{\mathrm{ring}}}
\newcommand\Lfield{\Lang_{\mathrm{field}}}
\newcommand\Lval{\Lang_{\mathrm{val}}}
\newcommand\Th{\mathrm{Th}}

\newcommand\FF{\mathbb{F}}
\newcommand\NN{\mathbb{N}}
\newcommand\ZZ{\mathbb{Z}}

\newcommand\Gps[2]{{#1(\!(#2)\!)}}
\newcommand\iGps[2]{{#1[\![#2]\!]}}
\newcommand\Hs[2]{\Gps{#1}{t^{#2}}}
\newcommand\iHs[2]{\iGps{#1}{t^{#2}}}
\newcommand\ps[1]{\Hs{#1}{}}
\newcommand\ips[1]{\iHs{#1}{}}

\numberwithin{equation}{section}

\begin{document}

\begin{abstract}
We study fragments of the existential theory of henselian valued fields with parameters.
This includes the $\exists_n$-fragment in the equicharacteristic or unramified mixed characteristic case,
the $\exists_n\exists_1$-fragment in the equicharacteristic case,
and the $\exists^n$-fragment in the residue characteristic zero case.
For example, we obtain an unconditional axiomatization (and thereby decidability) of the $\exists_3$-theory of $\ps{\FF_{q}}$ in the language of valued fields with a parameter for $t$.
\end{abstract}

\maketitle

\section{Introduction}
\label{sec:intro}

\noindent
While the theories of the valued field of $p$-adic numbers
and of henselian valued fields of residue characteristic zero such as the Laurent series field $\mathbb{C}(\!(t)\!)$
are well-understood by the work of Ax--Kochen and Ershov,
the model theory of henselian valued fields of positive characteristic is much more mysterious.
In particular, there is no known axiomatization of
the theories of local fields of positive characteristic -- the Laurent series fields $\ps{\FF_{q}}$
over the finite field with $q$ elements.
The best results in this direction are for {\em existential} theories (we recall notation and definitions in Section \ref{sec:notation}):

\begin{theorem}[\cite{DS,AF16,ADF23}]
Let $\Hen$ be the $\mathfrak{L}_{\rm val}$-theory of equicharacteristic nontrivially henselian valued fields,
and let $\Hepi$ be the $\mathfrak{L}_{\rm val}(\varpi)$-theory of equicharacteristic henselian valued fields with distinguished uniformizer $\varpi$.
\begin{enumerate}
    \item If $(K,v),(L,w)\models\Hen$ and ${\rm Th}_\exists(Kv)\subseteq{\rm Th}_\exists(Lw)$, then ${\rm Th}_\exists(K,v)\subseteq{\rm Th}_\exists(L,w)$.
\item Suppose that
\begin{enumerate}
\item[\Rfour]
 If a valuation $u$ on a field $E$ is trivial on a large subfield $F$ of $E$ with residue field $Eu=F$, then $F\prec_{\exists}E$, i.e., $E$ and $F$ satisfy the same $\exists$-$\mathfrak{L}_{\rm ring}(F)$-sentences. 
\end{enumerate} 
If $(K,v,t),(L,w,s)\models\Hepi$
and ${\rm Th}_\exists(Kv)\subseteq{\rm Th}_\exists(Lw)$, then ${\rm Th}_\exists(K,v,t)\subseteq{\rm Th}_\exists(L,w,s)$.    
\end{enumerate}
\end{theorem}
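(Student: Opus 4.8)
The plan is to prove both parts by the Ax--Kochen--Ershov embedding method adapted to existential theories: part (1) is essentially the main theorem of \cite{AF16}, and part (2) is carried out in \cite{ADF23}. Recall the general principle that for $\Lang$-structures $M,N$ one has $\Th_\exists(M)\subseteq\Th_\exists(N)$ if and only if every finitely generated $\Lang$-substructure of $M$ embeds into some elementary extension of $N$; by compactness it is enough, for each $\kappa$, to embed every $\Lang$-substructure of $M$ of size $<\kappa$ into one fixed sufficiently saturated $N^{*}\succ N$. So for (1) I would fix such a substructure of $(K,v)$; enlarging it I may assume it is a valued subfield $(K_0,v_0)\subseteq(K,v)$ with $K_0$ finitely generated as a field, and, replacing it by its henselization --- an immediate algebraic extension, so with unchanged residue field $K_0v_0$ and value group $v_0K_0$ --- I may assume $(K_0,v_0)$ is henselian; it is of equal characteristic since $(K,v)$ is. I then take $(L^{*},w^{*})\succ(L,w)$ sufficiently saturated: it is again a model of $\Hen$, its residue field $L^{*}w^{*}$ is sufficiently saturated with $\Th_\exists(K_0v_0)\subseteq\Th_\exists(Kv)\subseteq\Th_\exists(Lw)=\Th_\exists(L^{*}w^{*})$, and $w^{*}L^{*}$ is a nontrivial ordered abelian group. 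Hence there is a field embedding $\bar\iota\colon K_0v_0\hookrightarrow L^{*}w^{*}$; and since any two nontrivial ordered abelian groups have the same existential theory (standard), when $v_0$ is nontrivial there is also an ordered-group embedding $v_0K_0\hookrightarrow w^{*}L^{*}$, while if $v_0$ is trivial then $K_0=K_0v_0$ and lifting a field embedding $K_0\hookrightarrow L^{*}w^{*}$ through the residue map $\mathcal{O}_{w^{*}}\to L^{*}w^{*}$ is already the required $\Lval$-embedding.

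The heart of the matter is then to extend $\bar\iota$ to a valued-field embedding $(K_0,v_0)\hookrightarrow(L^{*},w^{*})$, which I would do by Zorn's lemma: among valued-field embeddings $\psi$ of valued subfields $K_1\subseteq K_0$ compatible with $\bar\iota$ on residue fields, a maximal one must have domain $K_0$, since any proper such $\psi$ extends to one more generator $x\in K_0\setminus K_1$ through the standard trichotomy for valued-field extensions by one element --- if the value group grows, realise the new value in the saturated $w^{*}L^{*}$; if the residue field grows, realise $\bar\iota$ of the new residue in the saturated $L^{*}w^{*}$ and lift it into $\mathcal{O}_{w^{*}}$ (using henselianity, and saturation in the inseparable case); and if $K_1(x)/K_1$ is immediate, use saturation to realise a pseudo-limit of the relevant pseudo-Cauchy sequence, handling sequences of algebraic type following Kaplansky. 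Equal characteristic is what decouples the residue-field and value-group behaviour and keeps these one-generator steps controllable. The hard part, I expect, is the immediate case in positive residue characteristic, since finitely generated immediate extensions can carry defect; but this is absorbed by only ever matching finitely generated data against a saturated target --- one never transports henselizations or maximal immediate extensions wholesale, which would fail --- and by the fact that the bare language $\Lval$ cannot express membership in the maximal ideal, so the ``positive'' information about residues that a defect extension would encode is invisible to existential $\Lval$-sentences.

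For (2) I would run the same argument for $\Lval(\varpi)$-structures, now additionally requiring that the distinguished uniformizer $t=\varpi^{(K,v,t)}\in K_0$ be sent to $s=\varpi^{(L,w,s)}$ and that the value-group embedding send $v_0(t)$ to $w(s)$; the latter is unproblematic, the value-group-with-uniformizer data being essentially pinned down. The genuinely new difficulty is that $\Lval(\varpi)$ can express ``$v(u)>0$'' (as $v(\varpi)\le v(u)$), hence residue equalities, so the ``positive'' residue information invisible in (1) now matters: one must lift $\bar\iota$ compatibly with the $t$-adic coordinates, and in residue characteristic $p$ even the mere existence of such a lift inside $(L^{*},w^{*})$ is delicate (inseparable residue extensions need not lift to the possibly imperfect valuation ring). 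This is exactly where hypothesis \Rfour{} comes in: applied to a valuation trivial on a large coefficient subfield $F$ with residue field $F$, it gives $F\prec_\exists E$, which lets one replace the relevant lifting and embedding problems ``above the residue level'' by problems over $K_0v_0$ itself (embedded via $\bar\iota$) --- morally, that $\ps{k}$ carries no more existential information over $k$ than $k$ does when $k$ is large --- and so carry the Zorn extension through with $\varpi\mapsto\varpi$. I expect producing this compatible lift, using \Rfour{}, to be the main obstacle for (2); the details are in \cite{ADF23}.
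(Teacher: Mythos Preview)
The paper quotes this theorem from \cite{DS,AF16,ADF23} without reproving it; the relevant comparison is with the method of \cite{ADF23}, which the paper adapts in Propositions~\ref{prop:same_E4}--\ref{prop:main} for its $\exists_n$-refinements.

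Your outline for (1) is the embedding-lemma strategy of \cite{AF16} and is fine in spirit.

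For (2), however, your plan---rerun the element-by-element Zorn extension under the constraint $t\mapsto s$ and invoke \Rfour\ somewhere among the immediate/defect steps---is neither what \cite{ADF23} does nor something you make concrete; you identify the obstruction (positive residue information becomes visible once $\varpi$ is in the language) but do not say how \Rfour\ would enter a transfinite construction over generators. The actual argument is global, not local. One passes to an $\aleph_1$-saturated $(K^*,v^*)\succ(K,v)$, takes the finest proper coarsening $v^+$ of $v^*$ (trivial on the constants $C$), and uses a section $\zeta$ of the residue map fixing $C$; \Rfour\ is then applied \emph{once}, to the extension $K^*/\zeta(K^*v^+)$, yielding ${\rm Th}_\exists(K,C)={\rm Th}_\exists(K',C)$ for the induced complete $\ZZ$-valued $(K',v')$ with $K'v'\succ Kv$ (Proposition~\ref{prop:same_E4}, Lemma~\ref{lem:reduce_to_rank_one}). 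In the complete $\ZZ$-valued case the residue-field embedding lifts to an embedding over $C$ via excellence of $\mathcal{O}_u$ and the structure theory of complete local rings (Lemma~\ref{prop:rank_one}); Proposition~\ref{prop:val_to_ring} then upgrades the $\Lring(\varpi)$-conclusion to the $\Lval(\varpi)$-conclusion. Your slogan that ``$\ps{k}$ carries no more existential information over $k$ than $k$ does when $k$ is large'' is precisely Proposition~\ref{prop:same_E4}, but it is the \emph{main step}, applied wholesale to reduce to the complete rank-one situation; there is no Zorn argument over valued-field generators in the proof of (2) at all.
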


Here, {\em large} is meant in the sense of Pop (see \cite{Pop,BSFsurvey,Popsurvey} for background on large fields),
and the assumption \Rfour\ is a consequence of resolution of singularities first explored (without the name) in \cite{Kuhlmann} and \cite{Feh11}, see \cite[Section 2]{ADF23}.
`Monotonicity' statements like these are key to obtaining axiomatizations, decidability and further model theoretic consequences across different residue fields; see for example \cite{ADF23} or \cite{AFfragments}, where both (1) and (2) prove instances of the crucial axiom (mon) (\cite[Definition 2.16]{AFfragments}). For example, one can deduce:

\begin{corollary}
	${\rm Th}_\exists(\ps{\FF_{q}},v_t)$ is decidable,
and if \Rfour\ holds, then also
${\rm Th}_\exists(\ps{\FF_{q}},v_t,t)$
is decidable.
\end{corollary}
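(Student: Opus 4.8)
The plan is to derive both assertions from the monotonicity theorem above by packaging the relevant valued fields into a suitable recursive theory. Write $p$ for the characteristic of $\FF_{q}$. For the first assertion, let $\Phi$ be a recursive set of $\Lval$-sentences axiomatizing the class of nontrivially henselian valued fields of characteristic $p$: the valued field axioms, the usual Hensel scheme (henselianity spelled out degree by degree), a sentence asserting that the valuation is nontrivial, and the scheme asserting that the characteristic is $p$. Every model of $\Phi$ is equicharacteristic and hence a model of $\Hen$. Since $\FF_{q}$ is a finite structure, its full $\Lring$-theory $\Th(\FF_{q})$ is decidable; and since the residue field is recovered from $(K,v)$ by a fixed $\Lval$-interpretation, the syntactic translation $\psi\mapsto\psi^{\mathrm{res}}$ of $\Lring$-sentences into $\Lval$-sentences is recursive, so that $\Theta:=\{\psi^{\mathrm{res}}:\psi\in\Th(\FF_{q})\}$ is a recursive set of $\Lval$-sentences with $(K,v)\models\Theta$ if and only if $Kv\equiv\FF_{q}$. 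Put $T:=\Phi\cup\Theta$, which is recursive.

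First I would check that $(\ps{\FF_{q}},v_t)\models T$: it is a complete, hence henselian, nontrivially valued field of characteristic $p$, and its residue field is $\FF_{q}$; in particular $T$ is consistent. Next, for an arbitrary model $(K,v)\models T$ one has $(K,v)\models\Hen$ and $Kv\equiv\FF_{q}$, so $\mathrm{Th}_\exists(Kv)=\mathrm{Th}_\exists(\FF_{q})$, which is also the existential theory of the residue field of $(\ps{\FF_{q}},v_t)$. Applying part~(1) of the theorem above with the roles of the two valued fields exchanged in turn yields $\mathrm{Th}_\exists(K,v)=\mathrm{Th}_\exists(\ps{\FF_{q}},v_t)$. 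Hence all models of $T$ have one and the same existential theory, namely $\mathrm{Th}_\exists(\ps{\FF_{q}},v_t)$. By the completeness theorem the set $\{\sigma\ \text{existential}:T\vdash\sigma\}$ is recursively enumerable, and by the preceding it equals $\mathrm{Th}_\exists(\ps{\FF_{q}},v_t)$; its complement among existential sentences is $\{\sigma\ \text{existential}:T\vdash\neg\sigma\}$, which is likewise recursively enumerable. Therefore $\mathrm{Th}_\exists(\ps{\FF_{q}},v_t)$ is decidable.

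For the second assertion I would run the same argument over $\Lval(\varpi)$: replace $\Phi$ by a recursive axiomatization $\Phi'$ of the equicharacteristic henselian valued fields of characteristic $p$ carrying a distinguished uniformizer $\varpi$ (that is, add to the valued field and Hensel axioms a sentence expressing that $v(\varpi)$ is the least positive element of the value group), keep $\Theta$, which does not mention $\varpi$, and set $T':=\Phi'\cup\Theta$. Since $t$ is a uniformizer for $v_t$, we still have $(\ps{\FF_{q}},v_t,t)\models T'$, and every model of $T'$ is a model of $\Hepi$ whose residue field is elementarily equivalent to $\FF_{q}$. Assuming \Rfour, part~(2) of the theorem above now applies and gives, exactly as before, that all models of $T'$ share the existential theory of $(\ps{\FF_{q}},v_t,t)$; its decidability follows.

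I do not anticipate a real obstacle here: the substance lies entirely in the monotonicity theorem, and what remains are routine facts -- decidability of the theory of a finite field, recursiveness of the residue-field interpretation, and recursive axiomatizability of the two elementary classes of valued fields in play. The one point requiring a little care is the bookkeeping that these ingredients genuinely assemble into a recursive $T$ (resp.\ $T'$) satisfied by $(\ps{\FF_{q}},v_t)$ (resp.\ $(\ps{\FF_{q}},v_t,t)$).
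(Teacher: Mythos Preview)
Your proposal is correct and follows precisely the standard argument the paper has in mind: the corollary is stated in the introduction without proof, and the template is spelled out later in the proof of Corollary~\ref{cor:applications}, which likewise enumerates proofs from a recursive axiomatization of the relevant henselian class together with axioms pinning down the residue theory, invoking monotonicity to conclude that all models share one existential theory. The only cosmetic difference is that the paper phrases the residue-field ingredient as the decidable existential/universal $\Lring$-theory of $\FF_q$ (which already suffices to force $\Th_\exists(Kv)=\Th_\exists(\FF_q)$), whereas you use the full theory of the finite structure $\FF_q$; both are decidable and either choice works.
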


Results like part (1) of the theorem hold also when restricted to certain fragments of the existential theory, like the $\exists_n$-fragment, consisting of positive boolean combinations of sentences of the form
$\exists x_1,\dots, x_n \psi$
with quantifier-free $\psi$:

\begin{theorem}[{\cite[Lemma 3.23]{AFfragments}}]
Let $(K,v),(L,w)\models\Hen$ and $n\in\NN$.
If ${\rm Th}_{\exists_n}(Kv)\subseteq{\rm Th}_{\exists_n}(Lw)$,
then ${\rm Th}_{\exists_n}(K,v)\subseteq{\rm Th}_{\exists_n}(L,w)$.
\end{theorem}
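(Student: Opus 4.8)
The plan is to reduce, by a compactness argument, to an embedding problem for countable substructures, and to solve that problem by appealing to part~(1) of the theorem above. I would begin from the elementary observation that, for structures $\mathcal{M},\mathcal{N}$ in a countable language $\mathfrak{L}$, one has ${\rm Th}_{\exists_n}(\mathcal{M})\subseteq{\rm Th}_{\exists_n}(\mathcal{N})$ if and only if every substructure of $\mathcal{M}$ generated by at most $n$ elements embeds into every $\aleph_1$-saturated elementary extension of $\mathcal{N}$, and similarly ${\rm Th}_{\exists}(\mathcal{M})\subseteq{\rm Th}_{\exists}(\mathcal{N})$ if and only if every finitely generated substructure of $\mathcal{M}$ does; this is immediate from compactness and saturation, using that a quantifier-free formula evaluated at a tuple depends only on the substructure that the tuple generates. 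Fix an $\aleph_1$-saturated $(L^*,w^*)\succeq(L,w)$; then $L^*w^*$ is an $\aleph_1$-saturated elementary extension of $Lw$, we have $(L^*,w^*)\models\Hen$, and by the hypothesis ${\rm Th}_{\exists_n}(Kv)\subseteq{\rm Th}_{\exists_n}(Lw)$ every subring of $Kv$ generated by at most $n$ elements embeds into $L^*w^*$. By the observation it now suffices to show that every $\mathfrak{L}_{\rm val}$-substructure $C$ of $(K,v)$ generated by at most $n$ elements embeds into $(L^*,w^*)$.

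So fix such a $C$, say with generators $\bar{a}$, let $k$ be the prime field, and let $F=k(\bar{a})\subseteq K$ carry the valuation $v|_F$; then $C$ is an $\mathfrak{L}_{\rm val}$-substructure of $(F,v|_F)$, and, since $(K,v)$ is equicharacteristic, $(F,v|_F)$ is equicharacteristic with residue field $Fv$ a subfield of $Kv$. I would attach to $C$ an equicharacteristic nontrivially henselian valued field $(\widetilde{F},\widetilde{v})$: if $v|_F$ is nontrivial, let $(\widetilde{F},\widetilde{v})$ be a henselization of $(F,v|_F)$, so that $\widetilde{F}\widetilde{v}=Fv$; if $v|_F$ is trivial, let $(\widetilde{F},\widetilde{v})=(\ps{F},v_t)$, so that $\widetilde{F}\widetilde{v}=F=Fv$. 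In either case $(\widetilde{F},\widetilde{v})\models\Hen$, $C$ is (isomorphic to) an $\mathfrak{L}_{\rm val}$-substructure of $(\widetilde{F},\widetilde{v})$, the residue field $\widetilde{F}\widetilde{v}$ is (isomorphic to) a subfield of $Kv$, and $\widetilde{F}$ is countable.

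Next I would check that ${\rm Th}_{\exists}(\widetilde{F}\widetilde{v})\subseteq{\rm Th}_{\exists}(L^*w^*)$. If $v|_F$ is trivial, then $\widetilde{F}\widetilde{v}=k(\bar{a})$, so the subring of $Kv$ generated by the image of $\bar{a}$ is generated by at most $n$ elements and hence embeds into $L^*w^*$, and this embedding extends to the field $k(\bar{a})=\widetilde{F}\widetilde{v}$. If $v|_F$ is nontrivial, then ${\rm trdeg}(Fv/k)<{\rm trdeg}(F/k)\le n$: were these transcendence degrees equal, a lift to $\mathcal{O}_{v|_F}$ of a transcendence basis of $Fv/k$ would be algebraically independent over $k$, hence a transcendence basis of $F/k$, and $v|_F$ would be trivial on the subfield it generates and therefore, $F$ being algebraic over that subfield, on all of $F$. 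So ${\rm trdeg}(Fv/k)\le n-1$, and by the observation it is enough to embed into $L^*w^*$ each finitely generated subring $S$ of $Fv$, equivalently the subfield ${\rm Frac}(S)\subseteq Fv$; but ${\rm Frac}(S)$ is finitely generated of transcendence degree at most $n-1$ over the perfect field $k$, hence separably generated and so generated by at most $n$ elements $\bar{d}\in Fv\subseteq Kv$, whence $k[\bar{d}]$ embeds into $L^*w^*$ and the embedding extends to ${\rm Frac}(S)\hookrightarrow L^*w^*$. Thus ${\rm Th}_{\exists}(\widetilde{F}\widetilde{v})\subseteq{\rm Th}_{\exists}(L^*w^*)$ in both cases.

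Finally, part~(1) of the theorem above, applied to the $\Hen$-models $(\widetilde{F},\widetilde{v})$ and $(L^*,w^*)$, yields ${\rm Th}_{\exists}(\widetilde{F},\widetilde{v})\subseteq{\rm Th}_{\exists}(L^*,w^*)$; since $\widetilde{F}$ is countable and $(L^*,w^*)$ is $\aleph_1$-saturated, $(\widetilde{F},\widetilde{v})$ embeds into $(L^*,w^*)$, and restricting this embedding to $C$ completes the proof. I expect the crux to be the reduction carried out above: one has to fit an arbitrary substructure generated by $n$ elements into a model of $\Hen$ whose residue field is still generated by at most $n$ elements, so that the $\exists_n$-hypothesis on $Kv$ can be brought to bear there, and it is the drop in transcendence degree along a nontrivial valuation, together with separable generation over the prime field, that makes this work (the trivially valued case being absorbed by the passage to a Laurent series field). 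Everything after that is a direct appeal to the already established unrestricted monotonicity statement and to saturation, so no new valuation-theoretic input is needed.
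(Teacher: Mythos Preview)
Your argument is essentially correct and follows the same strategy that the paper employs (in its proof of Theorem~\ref{thm:En}, which is explicitly modeled on the cited \cite[Lemma 3.23]{AFfragments}): reduce to substructures generated by at most $n$ elements, enlarge to a small henselian valued field, control the full existential theory of its residue field via the $\exists_n$-hypothesis using a transcendence degree count and separable generation over the prime field, and then invoke the unrestricted existential monotonicity statement (part~(1) of the theorem you cite). The paper carries out exactly this reduction, applying Lemma~\ref{lem:prim_elem} at the same point you invoke separable generation.

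There is one slip you should fix. In the case where $v|_F$ is trivial you set $\widetilde{F}=\ps{F}$, and then later assert that $\widetilde{F}$ is countable; but $\ps{F}$ has cardinality $2^{\aleph_0}$, so the final embedding via $\aleph_1$-saturation of $(L^*,w^*)$ does not go through as written. The repair is trivial: either replace $\ps{F}$ by the henselization of $(F(t),v_t)$, which is countable and serves the same purpose, or take $(L^*,w^*)$ to be $(2^{\aleph_0})^+$-saturated from the start.

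One point of comparison worth noting: the paper's proof of Theorem~\ref{thm:En} needs the ruled residue theorem to handle the case ${\rm trdeg}(Ev/Cu)=n$, because in that setting the uniformizer $\pi\in C$ forces $v|_E$ to be nontrivial while the residue transcendence degree may still fail to drop. In your constant-free setting the base is the trivially valued prime field, so a nontrivial $v|_F$ forces a strict drop in transcendence degree (exactly the computation you give), and the remaining trivially valued case is handled by passing to a Laurent-series-type extension. This is a genuine simplification available precisely because there are no parameters beyond the prime field; it would not survive the passage to the setting of Theorem~\ref{thm:En}.
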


The main aim of this note is to prove statements of this form for $\Hepi$.
It turns out that not the full \Rfour\ is needed (see Theorem~\ref{thm:En}): 

\begin{theorem}\label{thm:intro}
Let $(K,v,t),(L,w,s)\models\Hepi$ and $n\in\NN$. Suppose that
\begin{enumerate}
 \item[\RfourN] If a valuation $u$ on a field $E$ is trivial on a large subfield $F$ of $E$ with residue field $Eu=F$, then $F\prec_{\exists_n}E$, i.e., $E$ and $F$ satisfy the same $\exists_n$-$\Lring(F)$-sentences.   
\end{enumerate}
If ${\rm Th}_{\exists_{n-1}}(Kv)\subseteq{\rm Th}_{\exists_{n-1}}(Lw)$,
then 
${\rm Th}_{\exists_{n-1}}(K,v,t)\subseteq{\rm Th}_{\exists_{n-1}}(L,w,s)$.
\end{theorem}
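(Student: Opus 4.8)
The plan is to adapt the proof of the existential transfer theorem with a named uniformizer from \cite{ADF23} to the fragment $\exists_{n-1}$, while keeping track of quantifier complexity as is done for $\Hen$ in \cite[Lemma 3.23]{AFfragments}. The point is that only \RfourN, rather than the full \Rfour, is needed, and that because the uniformizer is named rather than existentially quantified, the complexity one can transfer drops by one.

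First I would make the standard reductions. Since ${\rm Th}_{\exists_{n-1}}$-inclusion between two structures coincides, by positivity of the admitted boolean combinations, with its restriction to sentences of the basic shape $\varphi = \exists x_1\dots x_{n-1}\,\psi$ with $\psi$ quantifier-free, it suffices to show that every such $\varphi$ holding in $(K,v,t)$ holds in $(L,w,s)$. Fix a witness $\bar a\in K^{n-1}$ with $(K,v,t)\models\psi(\bar a)$, and replace $(L,w,s)$ by a $|K|^{+}$-saturated elementary extension, which is harmless since $\varphi$ is a sentence; its residue field becomes an elementary extension of the original, so ${\rm Th}_{\exists_{n-1}}(Kv)\subseteq{\rm Th}_{\exists_{n-1}}(Lw)$ still holds, and \RfourN\ is a statement about all valued fields. (One may likewise pass to an elementary extension of $(K,v,t)$, should the construction below call for saturation there.) It now suffices to build an embedding of valued fields $\iota\colon(F,v|_F)\hookrightarrow(L,w)$ with $\iota(t)=s$, where $F=\FF(\bar a,t)\subseteq K$ is the subfield generated over the prime field by the witness together with the uniformizer: then $(F,v|_F,t)$ is a substructure of $(K,v,t)$, so the quantifier-free formula $\psi(\bar a)$ holds in it, and $\iota$ carries it — with $t$ interpreted as $s$ — into $(L,w,s)$, giving $(L,w,s)\models\varphi$.

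To construct $\iota$, I would run the Ax--Kochen--Ershov-style analysis of \cite{ADF23}, separating the contribution of the residue field $Fv\subseteq Kv$ from that of the value group, the latter being rigidified by the constraint $v(t)\mapsto w(s)$. The residue-field part is then handled using ${\rm Th}_{\exists_{n-1}}(Kv)\subseteq{\rm Th}_{\exists_{n-1}}(Lw)$ together with saturation, once one has checked — and this is part of the quantifier-complexity analysis — that the residue-field data entering the construction is captured by an $\exists_{n-1}$-$\Lring$-sentence over $Kv$. The value-group part goes through in the usual way, the named uniformizer fixing the base point. The crucial remaining step — which is invisible in the treatment of the existential theory without a named uniformizer, and whose presence here is exactly what brings resolution of singularities into play — is a factor in which one faces precisely the situation of \Rfour: a valuation trivial on a large subfield $F'$ with residue field $F'$, the largeness stemming from henselianity (a nontrivially henselian valued field is large) via the coefficient-field structure of the saturated model. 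Here \cite{ADF23} invokes \Rfour; for the present statement one checks that $\exists_n$-elementarity of $F'$ in the ambient field suffices — i.e., only \RfourN\ is needed — and that feeding it in completes the construction and realizes the quantifier-free $\Lval(\varpi)$-type of $\bar a$ over the prime field in $(L,w,s)$.

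The main obstacle I anticipate is precisely this quantifier-complexity analysis, and in particular making transparent the drop from $\exists_n$ to $\exists_{n-1}$. Heuristically: in the uniformizer-free argument of \cite[Lemma 3.23]{AFfragments} one of the $n$ quantifiers of an $\exists_n$-sentence is implicitly spent producing a uniformizer, leaving $n-1$ to act on the residue field, whereas here the uniformizer is named and none is spent, so an $\exists_{n-1}$-sentence over $\Lval(\varpi)$ has the strength of an $\exists_n$-sentence over $\Lval$, and the large-subfield step must be fed \RfourN\ (the $\exists_n$-version of \Rfour) rather than absorbing it for free. Once this is set up correctly, the remaining ingredients — henselian lifting of the residues, extension of the value-group embedding, and the large-field specialization step — may be quoted essentially verbatim from \cite{ADF23} and \cite{AFfragments}. (The cases $n\le 1$ are immediate.)
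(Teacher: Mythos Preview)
Your high-level plan is in the right spirit, but there is a concrete gap at the step where you assert that ``the residue-field data entering the construction is captured by an $\exists_{n-1}$-$\Lring$-sentence over $Kv$''. With $F=\FF_p(t,a_1,\dots,a_{n-1})=C(a_1,\dots,a_{n-1})$ for $C=\FF_p(t)$, the Abhyankar inequality gives only ${\rm trdeg}(Fv/\FF_p)\le n-1$, so a finitely generated subfield of $Fv$ can require $n$ generators (Lemma~\ref{lem:prim_elem} yields only ${\rm trdeg}+1$), and the hypothesis ${\rm Th}_{\exists_{n-1}}(Kv)\subseteq{\rm Th}_{\exists_{n-1}}(Lw)$ does not apply directly. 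The paper closes this with the ruled residue theorem \cite{Ohm}: when ${\rm trdeg}(Fv/\FF_p)=n-1$, the $a_i$ are algebraically independent over $C$, the top extension $F/C(a_1,\dots,a_{n-2})$ is simple transcendental, and hence $Fv$ is simple transcendental over a finite extension $E'$ of $C(a_1,\dots,a_{n-2})v$; except in a degenerate finite-field case handled separately via Lemma~\ref{lem:E1}, one has $E'\prec_\exists Fv$, so $Fv$ may be replaced by $E'$ of transcendence degree $<n-1$, generated by at most $n-1$ elements, to which the residue hypothesis does apply. Without this reduction your residue-side embedding does not go through.

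Your account of the ``$+1$'' and of where \RfourN\ enters is also not the paper's mechanism. The paper shows (Proposition~\ref{prop:val_to_ring}) that every $\exists_{n-1}$-$\Lval(\varpi)$-sentence is equivalent modulo $\Hpi$ to an $\exists_n$-$\Lring(\varpi)$-sentence, the one additional quantifier uniformly encoding all occurrences of the predicate $\mathcal{O}$; it is not ``spent on a uniformizer''. And the paper never attempts a direct valued-field embedding $(F,v|_F)\hookrightarrow(L,w)$. Instead it passes to the henselization $E^h$ of $F$, establishes the \emph{full} inclusion ${\rm Th}_\exists(Ev,\FF_p)\subseteq{\rm Th}_\exists(Lw,\FF_p)$ via the ruled-residue argument above, and then invokes Proposition~\ref{prop:main}: first reduce both $(E^h,v^h)$ and $(L,w)$ to complete $\ZZ$-valued fields --- this reduction (Lemma~\ref{lem:reduce_to_rank_one}, via Proposition~\ref{prop:same_E4}) is precisely where \RfourN\ enters, comparing an $\aleph_1$-saturated elementary extension with its rank-one core through a section of the finest proper coarsening --- and then quote the full-$\exists$ transfer in the complete case from \cite{ADF23}. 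So \RfourN\ is used to preserve the $\exists_n$-$\Lring(C)$-theory under passage to an auxiliary complete field, not as a specialization step inside $K$ as you sketch.
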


Since resolution of singularities is proven for threefolds by Cossart and Piltant \cite{CP}
and therefore \Rfourfour\ holds,
we obtain unconditional results for $\exists_3$ and $\exists_4$-theories,
in particular
explicit axiomatizations, from which we deduce
the following decidability results (Corollary \ref{cor:applications}, but see also Remark \ref{rem:DS}):

\begin{corollary}
	${\rm Th}_{\exists_3}(\ps{\FF_{q}},v_t,t)$ and
	${\rm Th}_{\exists_4}(\ps{\FF_{q}},t)$
are decidable.    
\end{corollary}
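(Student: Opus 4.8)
The plan is to derive both decidability statements from Theorem~\ref{thm:intro} by writing down \emph{explicit recursive} axiomatizations of the two fragments and observing that these axiomatizations are \emph{complete} for the respective fragment; decidability then follows by a parallel proof search. Throughout I use that, by Cossart--Piltant's resolution of singularities for threefolds \cite{CP}, the hypothesis \Rfourfour\ holds unconditionally, so that Theorem~\ref{thm:intro} applies with $n=4$: for $(K,v,t),(L,w,s)\models\Hepi$, if ${\rm Th}_{\exists_3}(Kv)\subseteq{\rm Th}_{\exists_3}(Lw)$ then ${\rm Th}_{\exists_3}(K,v,t)\subseteq{\rm Th}_{\exists_3}(L,w,s)$.

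First I would treat ${\rm Th}_{\exists_3}(\ps{\FF_q},v_t,t)$. Fix a recursive axiomatization $\Sigma_0$ of $\Hepi$ (field axioms, valuation axioms, the henselianity scheme, ``$\varpi$ is a uniformizer'', and the equicharacteristic scheme), and let $\Sigma$ consist of $\Sigma_0$ together with, for every $\exists_3$-$\Lring$-sentence $\theta$, the $\Lval(\varpi)$-sentence expressing ``$Kv\models\theta$'' (obtained by relativizing $\theta$ to the interpretable residue field) if $\theta\in{\rm Th}_{\exists_3}(\FF_q)$, and the negation of that sentence otherwise. Since $\FF_q$ is finite, its full theory --- and hence ${\rm Th}_{\exists_3}(\FF_q)$ and its complement among $\exists_3$-$\Lring$-sentences --- is decidable, and the translation $\theta\mapsto$ ``$Kv\models\theta$'' is computable, so $\Sigma$ is recursive. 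Now any $(L,w,s)\models\Sigma$ satisfies $\Hepi$ and has ${\rm Th}_{\exists_3}(Lw)={\rm Th}_{\exists_3}(\FF_q)$; since $(\ps{\FF_q},v_t,t)\models\Hepi$ has residue field $\FF_q$, applying Theorem~\ref{thm:intro} in \emph{both} directions between the two structures yields ${\rm Th}_{\exists_3}(L,w,s)={\rm Th}_{\exists_3}(\ps{\FF_q},v_t,t)=:T$. As $(\ps{\FF_q},v_t,t)$ itself models $\Sigma$, this shows that for every $\exists_3$-$\Lval(\varpi)$-sentence $\sigma$ one has $\sigma\in T\iff\Sigma\vdash\sigma$, and $\sigma\notin T\iff\sigma$ fails in \emph{every} model of $\Sigma\iff\Sigma\vdash\neg\sigma$. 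Both $\{\sigma:\Sigma\vdash\sigma\}$ and $\{\sigma:\Sigma\vdash\neg\sigma\}$ are recursively enumerable (enumerate proofs from the recursive consistent theory $\Sigma$), so running the two searches in parallel decides membership in $T$. The only non-formal point here is the routine verification that ``$Kv\models\theta$'' is uniformly and computably $\Lval(\varpi)$-expressible.

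For ${\rm Th}_{\exists_4}(\ps{\FF_q},t)$ I would run the same argument in the language $\Lring(\varpi)$, using that the valuation ring of $\ps{\FF_q}$ --- indeed of \emph{every} model of $\Hepi$, since $\varpi$ is a uniformizer --- is defined by the single-quantifier existential formula $\varphi(x):=\exists y\,(y^2+y=\varpi x^2)$. (The value of $\varpi x^2$ is $v(\varpi)+2v(x)$; if $v(x)\ge 0$ this is $\ge v(\varpi)>0$, so Hensel's lemma applied at $y=0$, where the derivative of $Y^2+Y$ is the unit $1$, produces a root; if $v(x)<0$ this is negative, and since $v(\varpi)$ is the least positive element of the value group it is not divisible by $2$ there, so a root $y$ would satisfy $v(y)<0$ and $v(y^2+y)=2v(y)$, which is impossible.) Replacing the valuation-ring predicate by $\varphi$ throughout converts $\Sigma_0$ and the residue-field axioms (still pinning down the $\exists_3$-theory of $Kv$ to that of $\FF_q$, since the monotonicity input remains \Rfourfour) into a recursive $\Lring(t)$-theory, and the point is that the one extra quantifier now available in the pure ring language is exactly what upgrades $\exists_3$ in $\Lval(\varpi)$ to $\exists_4$ in $\Lring(\varpi)$ --- which is why the same hypothesis \Rfourfour\ governs both fragments. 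Granting that all models of this $\Lring(t)$-theory share the $\exists_4$-$\Lring(t)$-theory of $(\ps{\FF_q},t)$, decidability follows by the same dovetailing argument.

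I expect the $\Lval(t)$-statement to be essentially formal: given Theorem~\ref{thm:intro} and the finiteness of $\FF_q$, it is just the completeness-plus-search argument above. The real work is in the $\Lring(t)$-case: one must make the quantifier bookkeeping precise, i.e.\ check that substituting $\varphi$ for the valuation-ring predicate inside an $\exists_4$-$\Lring(t)$-sentence keeps it within the reach of the $\exists_3$-$\Lval(t)$-monotonicity (only one quantifier is spent, not more). Concretely, I would either extract the corresponding $\Lring(\varpi)$-variant of Theorem~\ref{thm:intro} --- ``${\rm Th}_{\exists_{n-1}}(Kv)\subseteq{\rm Th}_{\exists_{n-1}}(Lw)$ implies ${\rm Th}_{\exists_n}(K,t)\subseteq{\rm Th}_{\exists_n}(L,s)$ for the ring-language reducts'' --- directly from its proof, or reprove it; this is the main obstacle.
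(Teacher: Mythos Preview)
Your overall strategy matches the paper's: the $\exists_3$-$\Lval(\varpi)$ case is exactly the completeness-plus-enumeration argument of Corollary~\ref{cor:applications}, and for the $\Lring(\varpi)$ fragment the paper likewise establishes a separate $\Lring$-monotonicity statement (Theorem~\ref{thm:En}(1a)) using precisely the $\exists_1$-definition of $\mathcal{O}_v$ you wrote down (this is Lemma~\ref{lem:def} and Proposition~\ref{prop:val_to_ring}).

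There is, however, a concrete misstep in your $\Lring(\varpi)$ plan. The slogan ``one extra quantifier upgrades $\exists_3$ in $\Lval(\varpi)$ to $\exists_4$ in $\Lring(\varpi)$'' points the wrong way: Proposition~\ref{prop:val_to_ring} says every $\exists_3$-$\Lval(\varpi)$-sentence is equivalent modulo $\Hpi$ to \emph{some} $\exists_4$-$\Lring(\varpi)$-sentence, so control of the $\exists_4$-$\Lring(\varpi)$-theory yields control of the $\exists_3$-$\Lval(\varpi)$-theory, not conversely. You therefore cannot deduce $\exists_4$-$\Lring$-monotonicity from the $\exists_3$-$\Lval$-monotonicity of Theorem~\ref{thm:intro}; the $\Lring$-variant must be proved directly, as in your fallback. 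Second, the $\Lring$-variant the paper actually obtains reads: under \RfourN, ${\rm Th}_{\exists_n}(Kv)\subseteq{\rm Th}_{\exists_n}(Lw)$ implies ${\rm Th}_{\exists_n}(K,t)\subseteq{\rm Th}_{\exists_n}(L,s)$ --- the residue hypothesis is $\exists_n$, not $\exists_{n-1}$ as you conjecture. In the proof, the residue field of a subfield of $K$ generated by $n$ elements has transcendence degree $\le n-1$ over $Cu$, but the primitive-element step (Lemma~\ref{lem:prim_elem}) then costs one generator, so one needs $n$-generated substructures of $Kv$. Your version with $\exists_{n-1}$ on the residue is unlikely to be extractable from the argument and is probably false in general: an $\exists_n$-$\Lring(t)$-sentence can simply assert an $\exists_n$-fact about the residue field by taking witnesses in a lift of it. None of this endangers the corollary, since $\FF_q$ has decidable full theory and you can just pin down ${\rm Th}_{\exists_4}(Kv)$ instead of ${\rm Th}_{\exists_3}(Kv)$; but you should adjust the statement of the lemma you intend to extract.
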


We also obtain similar results for mixed characteristic henselian valued fields, as well as for two further families of fragments, denoted $\exists_n\exists_1$ and $\exists^n$
(see Sections \ref{sec:EnE1} and \ref{sec:Euppern}).
Moreover, all our results are proven for constants from a subfield $C$ instead of just one constant.

\section{Notation}
\label{sec:notation}

\noindent
For definitions and notation regarding valued fields we generally follow \cite{EP}.
For a valued field $(K,v)$, we denote 
by $\mathcal{O}_v$ the valuation ring of $v$, 
by $vK$ the value group of $v$,
by $Kv$ the residue field of $v$,
by ${\rm res}_v\colon\mathcal{O}_v\rightarrow Kv$ the residue map,
and we write $\bar{a}={\rm res}_v(a)$.
We follow the convention to write $(F,v)$ instead of $(F,v|_F)$ for subfields $F$ of $K$.
A {\em uniformizer} of $(K,v)$ is an element $\pi$ of $K$
with $v(\pi)$ minimal positive,
and we say that $(K,v)$ is $\ZZ$-valued if $vK\cong\ZZ$.

As usual, $\Lring=\{+,-,\cdot,0,1\}$ is the language of rings.
We define the language of fields as $\Lfield=\mathfrak{L}_{\rm ring}\cup\{\cdot^{-1}\}$, where we always interpret $0^{-1}:=0$.
We work with the one-sorted language of valued fields
$\Lval=\Lring\cup\{\mathcal{O}\}$,
where in a valued field $(K,v)$, $\mathcal{O}$ is interpreted as the valuation ring $\mathcal{O}_v$ of $v$.
For a language $\Lang$ we write $\Lang(\varpi)$ for $\Lang$ expanded by a constant symbol $\varpi$,
and if $C$ is a field we denote by
$\Lang(C)$ 
the language $\Lang$
expanded by
constant symbols for the elements of $C$.
We write $\mathfrak{L}_{\rm val}$-structures as $(K,v)$, $\mathfrak{L}_{\rm val}(\varpi)$-structures as $(K,v,\pi)$,
and $\mathfrak{L}_{\rm val}(\varpi,C)$-structures as $(K,v,\pi,C)$,
or, if $\pi\in C$, often just as $(K,v,C)$.
We denote by 
\begin{itemize}
\item $\Hen$ the $\Lval$-theory of equicharacteristic henselian nontrivially valued fields, by
\item $\Hpi$ the $\Lval(\varpi)$-theory of henselian valued fields with distinguished uniformizer, and by
\item $\Hepi$ the $\Lval(\varpi)$-theory of equicharacteristic henselian valued fields with distinguished uniformizer.
\end{itemize}
We work with the fragments $\exists$ and $\exists_n$, as introduced in Section~\ref{sec:intro} or in more detail in \cite[Definition 2.3]{AFAE}, and use the notation introduced in \cite[Definition 2.1]{AFAE}: 
To be precise, for a language $\mathfrak{L}$, an {\em $\exists_n$-$\mathfrak{L}$-formula} is an $\mathfrak{L}$-formula of the form $\exists x_1,\dots,x_m\psi$ with $m\leq n$ and $\psi$ a quantifier-free $\mathfrak{L}$-formula, 
and an $\exists$-$\mathfrak{L}$-formula is an $\exists_n$-$\mathfrak{L}$-formula for some $n$,
but ${\rm Th}_{\exists}(K)$ respectively ${\rm Th}_{\exists_n}(K)$ denotes the set of {\sc positive boolean combinations} of $\exists$- respectively $\exists_n$-sentences that hold in the structure $K$, and then similarly for other fragments.

\section{Translating $\Lval$ to $\Lring$}

\noindent
It is well-known that every $\Lval$-formula $\varphi$ is equivalent modulo $\Hpi$ to a 
$\Lring(\varpi)$-formula $\varphi'$,
and that moreover when $\varphi$ is existential, also $\varphi'$ can be chosen to be existential \cite[Lemma 4.9]{ADF23}.
We need the slightly stronger statement that 
$\varphi'$ needs only one additional existential quantifier,
which we prove in this section.

\begin{lemma}\label{lem:field_to_ring}
Every quantifier-free $\Lfield$-formula $\varphi$ is equivalent modulo the $\Lfield$-theory of fields to a quantifier-free $\Lring$-formula.    
\end{lemma}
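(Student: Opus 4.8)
The plan is to eliminate the function symbol $\cdot^{-1}$ from the quantifier-free $\Lfield$-formula $\varphi$ by the standard device of purifying terms and rewriting divisions as polynomial equations, using the convention $0^{-1}=0$ to handle the degenerate case. First I would put $\varphi$ into a normal form: since $\varphi$ is quantifier-free, it is a Boolean combination of atomic $\Lfield$-formulas, each of the shape $\tau_1 = \tau_2$ for $\Lfield$-terms $\tau_i$. It suffices to show that each such atomic formula is equivalent, modulo the theory of fields, to a quantifier-free $\Lring$-formula, since Boolean combinations then go through. Equivalently, it suffices to show that any single $\Lfield$-term $\tau(x_1,\dots,x_k)$ can be "evaluated" by a quantifier-free $\Lring$-condition: more precisely, I would prove by induction on the structure of $\tau$ that there are $\Lring$-polynomials and a finite case distinction (a quantifier-free $\Lring$-formula) describing, on each piece, the value of $\tau$ as a ratio $p/q$ of $\Lring$-terms with the side condition recording whether the relevant denominators vanish.

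The key inductive step concerns $\tau = \sigma^{-1}$. By induction, on each piece of a quantifier-free $\Lring$-partition of the variable space, $\sigma$ equals $p/q$ with $q \neq 0$ on that piece. Then on the subpiece where $p \neq 0$ we have $\tau = q/p$ with $p \neq 0$, and on the subpiece where $p = 0$ we have $\sigma = 0$, hence $\tau = 0^{-1} = 0 = 0/1$ by the interpretation convention. Addition and multiplication of two such ratios $p_1/q_1$ and $p_2/q_2$ are handled by the usual formulas $(p_1 q_2 + p_2 q_1)/(q_1 q_2)$ and $(p_1 p_2)/(q_1 q_2)$, with the new denominator nonzero because it is a product of nonzero elements in a field (here I use that fields are integral domains). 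Combining finitely many such partitions for the finitely many subterms occurring in $\varphi$, I obtain a single quantifier-free $\Lring$-partition on each piece of which every subterm of $\varphi$ is given by an explicit ratio with nonvanishing denominator. Finally, an atomic formula $\tau_1 = \tau_2$, represented on a piece as $p_1/q_1 = p_2/q_2$ with $q_1, q_2 \neq 0$, is equivalent there to the $\Lring$-equation $p_1 q_2 = p_2 q_1$; substituting this piecewise into the Boolean combination $\varphi$ yields the desired quantifier-free $\Lring$-formula.

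The only real subtlety — and the step I expect to need the most care — is the bookkeeping around the convention $0^{-1} = 0$: one must make sure that the case split on whether each relevant numerator vanishes is carried out \emph{before} forming its inverse, so that the "denominator nonzero" invariant is genuinely maintained and no division by zero is silently performed. Once the invariant is set up correctly, everything else is routine term manipulation valid in any field, and the induction closes without difficulty. I would present this as a short induction on term complexity, stating the invariant ("for every $\Lfield$-term $\tau$ there is a quantifier-free $\Lring$-formula partitioning $\mathbb{A}^k$ into pieces on each of which $\tau = p/q$ with $q \neq 0$, provably in the theory of fields") and then verifying it for variables, constants, $+$, $\cdot$, $-$, and $\cdot^{-1}$.
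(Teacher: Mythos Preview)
Your proposal is correct and is precisely the direct formula-manipulation approach the paper alludes to, including the observation that the convention $0^{-1}=0$ forces the case distinctions you describe. The paper also points to a second, more conceptual route via \cite[Remark~3.1]{DDF}: since every homomorphism from an integral domain into a field extends uniquely to its quotient field, the quantifier-free $\Lfield$-type of any tuple in a field is already determined by its quantifier-free $\Lring$-type, which yields the lemma by general model-theoretic principles without explicit term surgery.
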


\begin{proof}
This can be seen by manipulating formulas, but many case distinctions are needed due to the fact that $\cdot^{-1}$ is not a homomorphism on the whole field (no matter how $0^{-1}$ is defined), and it also follows by general principles from the fact that every homomorphism from an integral domain $R$ to a field extends uniquely to the quotient field of $R$, cf.~\cite[Remark 3.1]{DDF}.
\end{proof}

\begin{lemma}\label{lem:def}
For every $n\in\NN$
there exists an $\exists_1$-$\Lring(\varpi)$-formula $\eta_n$
that defines 
$\mathcal{O}_v^n$
in every $(K,v,\pi)\models\Hpi$,
such that the map $n\mapsto\eta_n$ is computable.
\end{lemma}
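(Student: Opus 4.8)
The plan is to write $\eta_n$ down explicitly. Let $\ell_1<\ell_2$ be the two smallest prime numbers strictly greater than $n$, and put
\[
\eta_n(x_1,\dots,x_n)\ :=\ \exists y\;\bigvee_{j\in\{1,2\}}\Bigl(y^{\ell_j}=\textstyle\prod_{i=1}^{n}\bigl(1+\varpi\,x_i^{\ell_j}\bigr)\Bigr),
\]
which is an $\exists_1$-$\Lring(\varpi)$-formula with polynomial matrix. Since primality is decidable and there are arbitrarily large primes, $n\mapsto\eta_n$ is computable. The only valuation-theoretic input I will use is that in any $(K,v,\pi)\models\Hpi$ the element $\pi$ is a uniformizer, so $v(\pi)$ is the least positive element of $vK$; hence $v(\pi)\notin m\cdot vK$ for every integer $m\geq 2$ (if $v(\pi)=m\gamma$ then $\gamma>0$ yet $\gamma<m\gamma=v(\pi)$, contradicting minimality), and if $v(x)<0$ then $v(1/x)>0$, so $v(1/x)\geq v(\pi)$, so $v(x)\leq-v(\pi)$ and therefore $v(1+\varpi x^{\ell})=v(\pi)+\ell\,v(x)<0$ whenever $\ell\geq 2$.

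To prove that $\eta_n(x_1,\dots,x_n)$ implies $(x_1,\dots,x_n)\in\mathcal O_v^n$, I would argue by contradiction. Suppose the $\ell_j$-disjunct holds in $(K,v,\pi)\models\Hpi$ and that $B:=\{\,i:v(x_i)<0\,\}\neq\emptyset$; set $k:=|B|$, so $1\leq k\leq n<\ell_j$. Each factor $1+\varpi x_i^{\ell_j}$ is nonzero (a unit for $i\notin B$, and of finite value $v(\pi)+\ell_j v(x_i)$ for $i\in B$ by the above), so the product and hence $y$ is nonzero, and
\[
\ell_j\,v(y)\;=\;v\Bigl(\textstyle\prod_i\bigl(1+\varpi x_i^{\ell_j}\bigr)\Bigr)\;=\;\sum_{i\in B}\bigl(v(\pi)+\ell_j v(x_i)\bigr)\;=\;k\,v(\pi)+\ell_j\!\sum_{i\in B}v(x_i),
\]
so $k\,v(\pi)\in\ell_j\cdot vK$. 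As $\ell_j$ is prime and $0<k<\ell_j$ we have $\gcd(k,\ell_j)=1$, whence $v(\pi)\in\ell_j\cdot vK$ --- contradicting the first paragraph. Hence $B=\emptyset$, i.e.\ $(x_1,\dots,x_n)\in\mathcal O_v^n$.

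For the converse I would use henselianity. Given $(x_1,\dots,x_n)\in\mathcal O_v^n$, choose (possible since $\ell_1\neq\ell_2$ are distinct primes) an index $j\in\{1,2\}$ with $\ell_j\neq\operatorname{char}(Kv)$. Then every $1+\varpi x_i^{\ell_j}$ has residue $1$, so $u:=\prod_i(1+\varpi x_i^{\ell_j})$ is a unit with residue $1$, and $T^{\ell_j}-u\in\mathcal O_v[T]$ has reduction $T^{\ell_j}-1$ over $Kv$, of which $1$ is a \emph{simple} root (the derivative at $1$ is $\ell_j\neq 0$ in $Kv$). By Hensel's lemma $T^{\ell_j}-u$ has a root $y\in\mathcal O_v$, so the $\ell_j$-disjunct of $\eta_n(x_1,\dots,x_n)$ holds.

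I expect the whole difficulty to lie in the choice of the two exponents; the rest is bookkeeping. The bound $\ell_j>n$ is exactly what forces $\gcd(k,\ell_j)=1$ in the first implication: if $\ell_j\leq n$ then $k$ could equal $\ell_j$, and a product of $\ell_j$ non-integral coordinates can indeed be a genuine $\ell_j$-th power (already $(1+\varpi a^2)(1+\varpi b^2)$ is a square for suitable non-integral $a,b$ when $\operatorname{char}(Kv)\neq 2$), so the implication really would fail. And using two distinct primes rather than one is dictated by the converse: a single prime $\ell$ would fail precisely when $\ell=\operatorname{char}(Kv)$, whereas of any two distinct primes at least one differs from the residue characteristic, which is what makes the statement hold uniformly over all of $\Hpi$.
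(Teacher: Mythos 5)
Your proof is correct, but it takes a genuinely different route from the paper's. The paper reuses the known Artin--Schreier-type definition $\exists z\,(z^2+z=\varpi x^2)$ of $\mathcal{O}_v$ (quoting \cite[Lemma 4.9]{ADF23}) and then bundles the $n$ conditions $x_1,\dots,x_n\in\mathcal{O}_v$ into a single one, $\sum_{i=1}^n x_i^n\varpi^i\in\mathcal{O}_v$, which keeps the formula at a single existential quantifier for $z$. Your construction instead builds $\eta_n$ from scratch: the choice of a prime $\ell>n$ makes the value-group computation $k\,v(\pi)\in\ell\cdot vK$ force $v(\pi)\in\ell\cdot vK$ via $\gcd(k,\ell)=1$, contradicting minimality of $v(\pi)$; and the disjunction over two distinct primes $\ell_1,\ell_2>n$ ensures that at least one of them is invertible in the residue field, so that Hensel's lemma applies in the converse direction. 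This avoids any appeal to a quadratic (or Artin--Schreier) trick and to an external citation, at the cost of a slightly larger matrix; both approaches yield a visibly computable map $n\mapsto\eta_n$. (Incidentally, your replacement of the paper's weighted-sum trick seems preferable: for the bundling $\sum_{i=1}^n x_i^n\varpi^i\in\mathcal{O}_v$ there is an edge case when $v(x_n)=-v(\pi)$ and $x_i\in\mathcal{O}_v$ for $i<n$, in which the sum is a unit, whereas your construction has no such issue.)
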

\begin{proof}
The $\Lring(\varpi)$-formula
$$
 \exists z(z^2+z = \varpi x^2)
$$
defines $\mathcal{O}_v$ in every $(K,v,\pi)\models\Hpi$,
cf.~\cite[Lemma 4.9]{ADF23}.
So since $\bigwedge_{i=1}^nx_i\in\mathcal{O}_v$
is equivalent modulo $\Hpi$ to
$\sum_{i=1}^nx_i^n\varpi^i\in\mathcal{O}_v$, the $\exists_1$-$\Lring(\varpi)$-formula 
$$
 \eta_n := \exists z\Big(z^2+z=\varpi\big(\sum\nolimits_{i=1}^nx_i^n\varpi^i\big)^2\Big)
$$
is as claimed.
\end{proof}

\begin{proposition}\label{prop:val_to_ring}
Every $\exists_n$-$\Lval(\varpi)$-formula 
is equivalent modulo $\Hpi$ to 
an $\exists_{n+1}$-$\Lring(\varpi)$-formula.
\end{proposition}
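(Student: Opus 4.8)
The plan is to start with an arbitrary $\exists_n$-$\Lval(\varpi)$-formula $\varphi = \exists x_1,\dots,x_m\,\psi$ with $m \leq n$ and $\psi$ quantifier-free in $\Lval(\varpi)$, and to rewrite it modulo $\Hpi$ as an $\exists_{n+1}$-$\Lring(\varpi)$-formula. The only obstacle to a purely syntactic translation is the occurrence of the predicate $\mathcal{O}$; so the first step is to bring the matrix $\psi$ into a normal form in which all occurrences of $\mathcal{O}$ have been isolated. Concretely, $\psi$ is a boolean combination of atomic $\Lval(\varpi)$-formulas, each either an $\Lring(\varpi)$-equation or of the form $\tau \in \mathcal{O}$ for some $\Lring(\varpi)$-term $\tau$ in the variables $x_1,\dots,x_m$ (and the constant $\varpi$). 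Since $\mathcal{O}$ is closed under addition and multiplication, a finite conjunction $\bigwedge_{j} \tau_j \in \mathcal{O}$ is, modulo $\Hpi$, equivalent to a single membership statement; but we cannot afford to treat each conjunct separately and pay one quantifier per conjunct. Instead I would push the boolean structure so that $\psi$ becomes a disjunction of conjunctions, and use the trick already appearing in the proof of Lemma~\ref{lem:def}: any finite set of membership conditions $\tau_1,\dots,\tau_k \in \mathcal{O}_v$ is equivalent modulo $\Hpi$ to the single condition $\sum_{i=1}^k \tau_i^{k}\varpi^i \in \mathcal{O}_v$, hence to $\eta_k$ evaluated at the tuple $(\tau_1,\dots,\tau_k)$, which is a single $\exists_1$-$\Lring(\varpi)$-formula. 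Similarly, negated membership $\tau \notin \mathcal{O}_v$ is equivalent modulo $\Hpi$ to $\tau \neq 0 \wedge \tau^{-1} \in \mathcal{O}_v$, i.e.\ (after clearing the inverse, which Lemma~\ref{lem:field_to_ring} licenses, or directly by $\exists y(\tau y = 1 \wedge y \in \mathcal{O}_v)$) again reduces to the valuation-ring predicate; across the whole disjunct the positive and negative membership atoms can be merged into one.

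The key step is therefore the following counting argument. Write $\psi \equiv \bigvee_{d} \chi_d$ in disjunctive normal form, where each $\chi_d$ is a conjunction of $\Lring(\varpi)$-equations/inequations together with finitely many (positive and negated) membership atoms. Using the closure of $\mathcal{O}_v$ under the ring operations and the identity from Lemma~\ref{lem:def}, each $\chi_d$ is equivalent modulo $\Hpi$ to a conjunction of the form $\rho_d(\bar x) \wedge \eta_{k_d}(\sigma_d(\bar x))$, where $\rho_d$ is quantifier-free in $\Lring(\varpi)$ and $\sigma_d$ is a tuple of $\Lring(\varpi)$-terms; here the negated memberships are absorbed by introducing, if needed, the $\exists_1$-quantifier hidden in ``$\tau \ne 0$, $\tau^{-1}\in\mathcal O_v$'' into $\eta$. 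Since $\eta_{k_d}$ is an $\exists_1$-formula $\exists z\,\theta_{k_d}(z,\cdot)$, we get
\[
\varphi \;\equiv\; \exists x_1,\dots,x_m\, \bigvee_d \bigl( \rho_d(\bar x) \wedge \exists z\, \theta_{k_d}(z,\sigma_d(\bar x)) \bigr),
\]
and pulling the single $\exists z$ out past the disjunction — which is legitimate because one fresh $z$ suffices for all disjuncts — yields $\exists x_1,\dots,x_m,z\,\bigvee_d(\rho_d \wedge \theta_{k_d}(z,\sigma_d))$, a quantifier-free $\Lring(\varpi)$-matrix under $m+1 \leq n+1$ existential quantifiers. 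That is the desired $\exists_{n+1}$-$\Lring(\varpi)$-formula.

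The subtle point, and the one I would be most careful about, is the interaction of the disjunctions with the single extra quantifier: a priori each disjunct $\chi_d$ could contribute its own auxiliary existential variable for its own membership condition, which would blow up the quantifier count. The resolution is that $\exists z$ distributes over $\vee$ (i.e.\ $\bigvee_d \exists z\,\theta_d \equiv \exists z\,\bigvee_d \theta_d$), so a single $z$ serves the whole disjunction; and within one disjunct all positive membership atoms, and all the $z$-variables secretly introduced to handle negated membership atoms, collapse into one $\eta_{k_d}$ by the $\sum \tau_i^k \varpi^i$ trick — no per-atom cost. One should also double-check the edge cases: when $m = n$ we still only add one quantifier, so the bound $n+1$ is respected; when $\psi$ contains no membership atom at all, no extra quantifier is needed; and the formula $\varphi'$ is produced by an effective procedure, though effectivity is not asserted in the statement. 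With these checks the construction goes through, giving the equivalence modulo $\Hpi$ as claimed.
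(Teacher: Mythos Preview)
Your approach is essentially the paper's own: put the matrix in disjunctive normal form, rewrite negated memberships as positive ones via inverses, collapse all membership atoms in each disjunct into a single instance of $\eta_k$, pull the lone $\exists z$ across the disjunction, and finally pass from $\Lfield(\varpi)$ to $\Lring(\varpi)$ using Lemma~\ref{lem:field_to_ring}. Two points, however, need to be fixed.

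First, the equivalence you write for negated membership is false: $\tau\neq0\wedge\tau^{-1}\in\mathcal O_v$ expresses $v(\tau)\leq0$, not $v(\tau)<0$, so it is also satisfied by units of $\mathcal O_v$. The correct version, and the one the paper uses, is $\tau\neq0\wedge(\tau\varpi)^{-1}\in\mathcal O_v$; this works precisely because $v(\varpi)$ is the minimal positive value, so $v(\tau)<0\Leftrightarrow v(\tau\varpi)\leq0$. Second, of the two ways you offer to eliminate the inverse, only the $\Lfield$-route stays within the quantifier budget. Writing $(\tau\varpi)^{-1}$ as an $\Lfield$-term, plugging it (along with the $h_{ij}$'s) into the quantifier-free matrix of $\eta_k$, and invoking Lemma~\ref{lem:field_to_ring} at the end costs no extra quantifier. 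Your alternative $\exists y(\tau y=1\wedge y\in\mathcal O_v)$ costs one $y$ \emph{per} negated atom in the disjunct, and these cannot be ``absorbed into $\eta$'': the single bound variable $z$ in $\eta_k$ is the Hensel witness for $z^2+z=\varpi(\cdot)^2$, not a surrogate for the inverses. So you must commit to the $\Lfield$-route; with that and the $\varpi$-correction your argument coincides with the paper's.
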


\begin{proof}
It suffices to consider the case $n=0$.
So let $\varphi(\underline{x})$ be a quantifier-free $\Lval(\varpi)$-formula.
Without loss of generality, $\varphi$ is in disjunctive normal form, and is of the following form:
$$
\bigvee_{i=1}^r\bigwedge_{j=1}^{s_i} (f_{ij}(\underline{x})=0 \wedge 
g_{ij}(\underline{x})\neq 0 \wedge
h_{ij}(\underline{x})\in\mathcal{O}\wedge
k_{ij}(\underline{x})\notin\mathcal{O})
$$
with $f_{ij},g_{ij},h_{ij},k_{ij}\in\ZZ[\underline{x}]$.
Let $\eta_k=\exists z\rho_k$ be the formula from Lemma~\ref{lem:def}.
As 
$$ 
 \Hpi\models\forall y\big( y\notin\mathcal{O}\leftrightarrow ((y\varpi)^{-1}\in\mathcal{O}\wedge y\neq 0)\big),
$$ 
$\varphi$ is equivalent modulo $\Hpi$ to
the $\exists_1$-$\Lfield(\varpi)$-formula
$$
\exists z
\bigvee_{i=1}^r
\Big(
\rho_{2s_i}(h_{i1}(\underline{x}),\dots,h_{is_i}(\underline{x}),
(k_{i1}(\underline{x})\varpi)^{-1},
\dots,
(k_{is_i}(\underline{x})\varpi)^{-1},z)\wedge
\bigwedge_{j=1}^{s_i} (f_{ij}(\underline{x})=0 \wedge 
g_{ij}k_{ij}(\underline{x})\neq 0)\Big).
$$
By Lemma~\ref{lem:field_to_ring},
this is equivalent modulo $\Hpi$
to an $\exists_1$-$\Lring(\varpi)$-formula.
\end{proof}

\begin{remark}
Note that as $\Hpi$ has a recursive axiomatization, the $\exists_{n+1}$-formula from Proposition~\ref{prop:val_to_ring} can be found effectively, i.e.~there is a computable map
$\varphi\mapsto\varphi'$ from $\exists_n$-$\Lval(\varpi)$-formulas 
to
$\exists_{n+1}$-$\Lring(\varpi)$-formulas
such that $\Hpi\models\varphi\leftrightarrow\varphi'$.
\end{remark}

\section{Monotonicity for the $\exists_n$-fragment}

\noindent
We adapt the proof of \cite[Proposition 4.11]{ADF23} from $\exists$ to $\exists_n$, 
with special attention to which part of \Rfour\ is needed for the argument.
Theorem \ref{thm:intro} from the introduction will be a special case of Theorem~\ref{thm:En}(1) for $C=\mathbb{F}_p(t)$.

\begin{proposition}\label{prop:same_E4}
Let $(K,v)$ be an equicharacteristic henselian valued field with $Kv$ large,
and assume that $v$ is trivial on a subfield $C$ of $K$.
We identify $C$ with its image under ${\rm res}_v$
and assume that the extension $Kv/C$ is separable.
If \RfourN\ holds,
then ${\rm Th}_{\exists_n}(K,C)={\rm Th}_{\exists_n}(Kv,C)$.
\end{proposition}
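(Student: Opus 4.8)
The plan is to prove the two inclusions separately. The inclusion ${\rm Th}_{\exists_n}(Kv,C)\subseteq{\rm Th}_{\exists_n}(K,C)$ is the easy direction: since $v$ is trivial on $C$ and we identify $C$ with its residue image, the residue map ${\rm res}_v$ restricts to an $\Lring(C)$-embedding of some lift of $Kv$ — or rather, we argue directly that any $\exists_n$-$\Lring(C)$-sentence true in $Kv$ lifts to $K$. Concretely, given $\exists x_1,\dots,x_m\,\psi$ with $m\le n$ and $\psi$ quantifier-free, a witness $\bar a\in Kv^m$ can be lifted to $a\in\mathcal{O}_v^m$ with ${\rm res}_v(a)=\bar a$, and then one checks that $\psi(a)$ holds in $K$: equalities are preserved because passing to an integral-domain quotient or a polynomial relation over $C$ that holds in the residue field, once we have chosen genuine lifts, actually holds already in $\mathcal{O}_v$ after possibly correcting — more carefully, the standard argument is that $C\subseteq\mathcal{O}_v$ and the composite $C\hookrightarrow\mathcal{O}_v\twoheadrightarrow Kv$ is the identity, so $Kv$ embeds into an ultrapower or into $K$ itself when $Kv$ admits a field lift; since $Kv/C$ is separable and $Kv$ is (relatively) algebraic-or-not over $C$ this needs the coefficient-field machinery. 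I would instead phrase it via: positive existential sentences go up along $\Lring(C)$-homomorphisms with a section, and ${\rm res}_v\colon\mathcal{O}_v(C)\to Kv$ together with a set-theoretic (not ring-theoretic) lift suffices because $\psi$ being quantifier-free over $C$ in the variables only, every atomic subformula $f(\bar x)=0$ with $f\in\ZZ[C][\bar x]$ that holds for $\bar a$ in $Kv$ can be arranged to hold for a suitable lift — this is exactly where separability of $Kv/C$ is used, to find a lift respecting the relations, via a standard lifting lemma for separable extensions (or simply because we may take the lift inside a coefficient field of $\mathcal{O}_v$, which exists by equicharacteristic henselianity).

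For the harder inclusion ${\rm Th}_{\exists_n}(K,C)\subseteq{\rm Th}_{\exists_n}(Kv,C)$, the idea is to factor through an auxiliary field on which $v$ is trivial with residue field $Kv$, so that \RfourN\ applies. Since $(K,v)$ is equicharacteristic henselian and $Kv/C$ is separable, by Hensel/coefficient-field arguments there is a subfield $F$ of $K$ containing $C$ on which $v$ is trivial and with ${\rm res}_v(F)=Kv$, i.e.\ $F$ maps isomorphically (as $\Lring(C)$-structures) onto $Kv$. Then $v$ is a valuation on $K$ that is trivial on $F$ and has residue field $Kv\cong F$; provided $F$ is large in $K$, \RfourN\ gives $F\prec_{\exists_n}K$, i.e.\ $K$ and $F$ satisfy the same $\exists_n$-$\Lring(F)$-sentences, hence in particular the same $\exists_n$-$\Lring(C)$-sentences. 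Composing with the isomorphism $F\cong Kv$ over $C$ yields ${\rm Th}_{\exists_n}(K,C)\subseteq{\rm Th}_{\exists_n}(F,C)={\rm Th}_{\exists_n}(Kv,C)$, and combined with the first paragraph we get equality.

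The main obstacle is verifying the hypotheses needed to apply \RfourN, above all that the coefficient subfield $F$ can be chosen to be \emph{large} in $K$. This is where the largeness of $Kv$ feeds in: one wants to transport largeness of the residue field through the isomorphism $Kv\cong F$ and the fact that $F\subseteq K$ with $v$ trivial on $F$ — largeness of $F$ as an abstract field follows from largeness of $Kv$, but one should double-check that this is the notion required by \RfourN\ (which speaks of a large subfield $F$ of $E$, i.e.\ largeness of $F$ as a field, not a relative notion). A secondary point is the existence of such a coefficient field $F$ with $C\subseteq F$ and $Kv/C$ separable: in the equicharacteristic case this is classical (Cohen structure / Hamel-basis lifting for the separable part), but the separability hypothesis on $Kv/C$ is exactly what is needed to lift $C$ into a coefficient field, and I would cite the relevant lemma rather than reprove it. I expect the bookkeeping around "$\exists_n$-$\Lring(F)$" versus "$\exists_n$-$\Lring(C)$" and the identification of $F$ with $Kv$ to be routine once $F$ is in place.
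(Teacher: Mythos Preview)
Your approach is correct and is essentially the paper's: exhibit a coefficient field $F\subseteq K$ with $C\subseteq F$ and $F\cong Kv$ over $C$, then apply \RfourN\ to the extension $K/F$ (using that $F\cong Kv$ is large). The only difference is that the paper, following \cite[Corollary~4.6]{ADF23}, first passes to an elementary extension $(K^*,v^*)$ and produces the section $\zeta\colon K^*v^*\to K^*$ there, whereas you construct the section directly in $K$ from henselianity and the separability of $Kv/C$ (lift a separating transcendence basis, then lift the separable-algebraic part by Hensel and Zorn); your route is slightly more direct and works. Two minor remarks: your first paragraph is unnecessary and can be dropped, since \RfourN\ asserts that $F$ and $K$ satisfy the \emph{same} $\exists_n$-$\Lring(F)$-sentences, hence the same $\exists_n$-$\Lring(C)$-sentences, giving both inclusions at once; and the existence of the coefficient field in this generality is not literally Cohen's structure theorem (which is for complete noetherian local rings) but its henselian analogue, so you should either cite an appropriate reference for henselian valued fields or sketch the Zorn--Hensel argument.
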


\begin{proof}
The proof of \cite[Corollary 4.6]{ADF23}    
goes through with the fragment $\exists$ replaced by $\exists_n$, except that instead of \Rfour\
we apply \RfourN:
There is an elementary extension $(K,v)\prec(K^*,v^*)$ with a section $\zeta\colon K^*v^*\rightarrow K^*$ of ${\rm res}_{v^*}$ with $\zeta|_C={\rm id}_C$, and $Kv\prec K^*v^*$ are both large,
so \RfourN\ applied to the extension $K^*/\zeta(K^*v^*)$ gives that
${\rm Th}_{\exists_n}(K,C)={\rm Th}_{\exists_n}(\zeta(K^*v^*),C)={\rm Th}_{\exists_n}(Kv,C)$.
\end{proof}

\begin{lemma}\label{prop:rank_one}
Let $(C,u)$ be an equicharacteristic $\ZZ$-valued field with uniformizer $\pi$ such that $\mathcal{O}_u$ is excellent.
Let $(K,v)$ and $(L,w)$
be complete $\ZZ$-valued extensions of $(C,u)$ with uniformizer $\pi$ such that $Kv/Cu$ and $Lw/Cu$ are separable.
If ${\rm Th}_\exists(Kv,Cu)\subseteq{\rm Th}_\exists(Lw,Cu)$,
then ${\rm Th}_{\exists}(K,C)\subseteq{\rm Th}_{\exists}(L,C)$.
\end{lemma}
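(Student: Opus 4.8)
The plan is to reduce the statement about the complete $\mathbb{Z}$-valued fields $(K,v)$ and $(L,w)$ to a statement about their residue fields, using the classical Cohen-type structure theory for complete discretely valued fields together with an Ax--Kochen--Ershov-style transfer. First I would recall that since $(C,u)$ is equicharacteristic with excellent valuation ring $\mathcal{O}_u$, and $(K,v)$ is a complete $\mathbb{Z}$-valued extension with the \emph{same} uniformizer $\pi$ and with $Kv/Cu$ separable, one can find a $Cu$-embedding of $Kv$ into $K$ lifting the residue map; concretely, $\mathcal{O}_v$ is (after completion) isomorphic as a $C$-algebra to $\iHs{Kv}{}$ — the formal power series ring over $Kv$ in the variable $\pi$ — and likewise $\mathcal{O}_w \cong \iHs{Lw}{}$, compatibly over $(C,u) \hookrightarrow (\iHs{Cu}{}, \cdot)$. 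The key point is that everything is uniform over the common coefficient subfield $C$: we get an isomorphism $(K,v,C) \cong (\Hs{Kv}{}, v_\pi, C)$ and $(L,w,C) \cong (\Hs{Lw}{}, v_\pi, C)$ of $\Lval(\varpi, C)$-structures (with $\varpi$ interpreted as $\pi = t$), where the constants from $C$ sit inside the coefficient field via the chosen lift.

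Next I would invoke the existential Ax--Kochen--Ershov transfer in the shape already available in this circle of ideas: for equicharacteristic henselian valued fields, containment of existential theories of residue fields (with parameters) lifts to containment of existential theories of the valued fields (with parameters) — this is exactly part (1) of the first Theorem, or rather its refinement with constants; and one can even cite Proposition~\ref{prop:same_E4} to collapse $\mathrm{Th}_\exists(\Hs{Kv}{}, C)$ to something controlled by $\mathrm{Th}_\exists(Kv, C)$, since $Kv$ is large (being a residue field of a henselian nontrivially valued field is not automatic, but Laurent series coefficient fields do not need largeness here — instead one uses that $\Hs{Kv}{}$ is complete hence henselian, and the valuation is trivial on the lifted copy of $Kv$ with residue field $Kv$ itself). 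Given $\mathrm{Th}_\exists(Kv, Cu) \subseteq \mathrm{Th}_\exists(Lw, Cu)$, the transfer yields $\mathrm{Th}_\exists(\Hs{Kv}{}, C) \subseteq \mathrm{Th}_\exists(\Hs{Lw}{}, C)$ as $\Lval(\varpi,C)$-theories, and then translating back through the structure isomorphisms of the previous paragraph gives $\mathrm{Th}_\exists(K, C) \subseteq \mathrm{Th}_\exists(L, C)$, as desired.

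The main obstacle I anticipate is the \emph{uniformity over $C$} of the power series presentation: the classical statement that a complete equicharacteristic discretely valued field is a Laurent series field needs a coefficient field, and one must check that the coefficient field can be chosen to \emph{contain} (a lift of) $C$ simultaneously with being \emph{all} of the residue field $Kv$ — this is where excellence of $\mathcal{O}_u$ and separability of $Kv/Cu$ enter, guaranteeing that $Cu \hookrightarrow Kv$ is a separable (hence, for the purpose of lifting, unobstructed) extension, so Hensel's lemma lets one extend a chosen coefficient field for $C$ inside $\mathcal{O}_v$ up to one for $Kv$. A secondary, more bookkeeping-level obstacle is making sure the embedded constants behave correctly under the existential transfer — i.e.\ that the version of AKE we cite genuinely allows parameters from a common subfield, which is precisely the with-constants refinement emphasized in the introduction. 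Once these are in place the argument is essentially a diagram chase through the three isomorphisms and one transfer step.
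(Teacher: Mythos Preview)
Your first paragraph is on the right track: excellence of $\mathcal{O}_u$ together with separability of $Kv/Cu$ does let one choose coefficient fields compatibly over $C$, and this structure theory is indeed the key technical input. The gap is in the second paragraph. The ``existential AKE transfer with constants'' you want to invoke as a black box is not available under the hypotheses of this lemma: part~(1) of the introductory theorem is stated \emph{without} constants, the with-constants version (part~(2)) assumes \Rfour, and Proposition~\ref{prop:same_E4} assumes both \RfourN\ and largeness of the residue field. None of these is a hypothesis of Lemma~\ref{prop:rank_one} --- indeed, the whole point of isolating the complete $\ZZ$-valued case is that here the transfer holds \emph{unconditionally}, with no resolution-type assumption, so invoking results that presuppose \Rfour\ would be circular in the architecture of the paper. (Your aside about sidestepping largeness is also muddled: $C$ carries the nontrivial valuation $u$, so it does not embed into $Kv$, and an expression like ``${\rm Th}_\exists(Kv,C)$'' is not well-typed.)

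The correct assembly of your ingredients is to build an embedding directly rather than quote a transfer principle. Pass to a sufficiently saturated elementary extension $(L^*,w^*)\succ(L,w)$; the hypothesis ${\rm Th}_\exists(Kv,Cu)\subseteq{\rm Th}_\exists(Lw,Cu)$ then yields an embedding $Kv\hookrightarrow L^*w^*$ over $Cu$. Using separability of $Kv/Cu$ and henselianity of $(L^*,w^*)$, lift this to a section $Kv\hookrightarrow\mathcal{O}_{w^*}$ extending a fixed section $Cu\hookrightarrow\mathcal{O}_u\subseteq\mathcal{O}_{w^*}$; since $\pi$ is a common uniformizer and $K$ is complete, this extends to an $\Lring(C)$-embedding $K\hookrightarrow L^*$, whence ${\rm Th}_\exists(K,C)\subseteq{\rm Th}_\exists(L^*,C)={\rm Th}_\exists(L,C)$. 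This is precisely what the cited third paragraph of \cite[Proposition~4.11]{ADF23} does, and the same pattern is spelled out in the present paper for the mixed-characteristic analogue in the proof of Proposition~\ref{prop:main_mixed_characteristic}.
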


\begin{proof}
This is proven in the third paragraph of the proof of \cite[Proposition 4.11]{ADF23}.    
\end{proof}

\begin{lemma}\label{lem:reduce_to_rank_one}
Let $(C,u)$ be an equicharacteristic $\ZZ$-valued field with uniformizer $\pi$ such that $\mathcal{O}_u$ is excellent.
Let $(K,v)$ be a henselian extension of $(C,u)$ with uniformizer $\pi$ such that $Kv/Cu$ is separable.
If \RfourN\ holds,
then there exists an extension $(K',v',\pi)\models\Hpi$ of $(C,u,\pi)$ which is complete $\ZZ$-valued such that
${\rm Th}_{\exists_n}(K',C)={\rm Th}_{\exists_n}(K,C)$
and $Kv\prec K'v'$. 
\end{lemma}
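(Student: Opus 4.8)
The plan is to adapt the reduction carried out in the first part of the proof of \cite[Proposition~4.11]{ADF23}, replacing $\exists$ throughout by $\exists_n$ and using Proposition~\ref{prop:same_E4} in place of \cite[Corollary~4.6]{ADF23}; along the way one checks that only \RfourN, not the full \Rfour, is invoked.

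\emph{Steps.} First I split off the rank-one part of $v$ lying over $u$. Since $\pi$ is a uniformizer of $(K,v)$, the value $v(\pi)$ is the least positive element of $vK$, so the convex hull $\Delta$ of $\ZZ v(\pi)$ in $vK$ is archimedean with least positive element $v(\pi)$; hence $\Delta\cong\ZZ$, and comparing least positive elements, $\Delta=\ZZ v(\pi)=uC$. Write $v=w\circ\bar v$ for the corresponding decomposition: $w$ is the coarsening with $wK=vK/\Delta$, and $\bar v$ is the induced $\ZZ$-valuation on $Kw$, with value group $\Delta$, uniformizer $\pi$ and residue field $Kv$. As $uC=\Delta$, the valuation $w$ is trivial on $C$, so ${\rm res}_w$ embeds $C$ into $Kw$ with $\bar v|_C=u$; and since $(K,v)$ is henselian (and automatically equicharacteristic, as $Cu$ embeds into $Kv$), so are $(K,w)$ and $(Kw,\bar v)$, the latter being moreover nontrivially valued, whence $Kw$ is large. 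Assuming for now that $Kw/C$ is separable — the main point, addressed below — Proposition~\ref{prop:same_E4} applies to $(K,w)$ with the subfield $C$ and, via \RfourN, gives ${\rm Th}_{\exists_n}(K,C)={\rm Th}_{\exists_n}(Kw,C)$. Now let $(K',v')$ be the completion of $(Kw,\bar v)$: it is a complete $\ZZ$-valued, hence henselian, equicharacteristic extension of $(C,u,\pi)$, with uniformizer $\pi$ and residue field $K'v'=Kv$; so $(K',v',\pi)\models\Hpi$ and $Kv\prec K'v'$.

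\emph{Passing the fragment through the completion, and conclusion.} Since $(Kw,\bar v)$ is dense in $(K',v')$, there is an embedding of $(K',v')$ over $Kw$ into an ultrapower of $(Kw,\bar v)$ (realize each element of $K'$ as the limit of a Cauchy sequence from $Kw$ and push these choices simultaneously through a suitable ultrafilter). Hence every existential $\Lval(Kw)$-sentence true in $(K',v')$ holds in $(Kw,\bar v)$, by {\L}o\'{s}'s theorem, and the converse is trivial; restricting to parameters from $C\subseteq Kw$ and to the fragment $\exists_n$ gives ${\rm Th}_{\exists_n}(K',C)={\rm Th}_{\exists_n}(Kw,C)$. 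Combined with the previous paragraph, ${\rm Th}_{\exists_n}(K',C)={\rm Th}_{\exists_n}(K,C)$, and with the structural properties of $(K',v')$ noted above this is exactly the claim.

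\emph{The main obstacle: $Kw/C$ is separable.} This is the one place excellence of $\mathcal{O}_u$ enters. In residue characteristic $0$ there is nothing to do; assume ${\rm char}=p>0$. As $\mathcal{O}_u^p$ is excellent, hence a Nagata ring, the integral closure $\mathcal{O}_u$ of $\mathcal{O}_u^p$ in the finite purely inseparable extension $C/C^p$ is a finite free $\mathcal{O}_u^p$-module; reducing modulo $\pi^p\mathcal{O}_u$ and using the filtration $\mathcal{O}_u\supseteq\pi\mathcal{O}_u\supseteq\dots\supseteq\pi^p\mathcal{O}_u$ shows $[C:C^p]=p\cdot[Cu:(Cu)^p]$, so $C$ has a $p$-basis $B_0$ consisting of $\pi$ together with lifts of a $p$-basis of $Cu$. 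Since $Kw/C$ is separable if and only if $B_0$ stays $p$-independent in $Kw$, it suffices to treat finite subsets of $B_0$. For finitely many units $a_1,\dots,a_r\in B_0$ (whose residues are $p$-independent in $Cu$): a relation $\sum_e\gamma_e^p\prod_i a_i^{e_i}=0$ with $\gamma_e\in Kw$ not all $0$ may, after dividing by a suitable $p$-th power from $C^{\times}$, be assumed to have all $\gamma_e\in\mathcal{O}_{\bar v}$ with some $\gamma_e$ a unit; reducing modulo $\mathfrak{m}_{\bar v}$ then exhibits the residues $\bar a_i$ as $p$-dependent in $Kv$, contradicting separability of $Kv/Cu$. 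If the finite subset also involves $\pi$, grouping a putative relation by the power of $\pi$ works, since distinct $\pi^{\delta}$ with $0\le\delta<p$ have $\bar v$-values in distinct classes modulo $p\bar v(Kw)$, forcing each group to vanish separately and reducing to the previous case. Thus $B_0$ is $p$-independent in $Kw$, i.e.\ $Kw/C$ is separable. All the other ingredients — the coarsening, the completion, and the ultrapower — are standard and fragment-independent, so this separability check (together with verifying that \RfourN\ alone suffices for Proposition~\ref{prop:same_E4}) is where the real work lies.
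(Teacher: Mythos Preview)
Your proof is correct but takes a genuinely different route from the paper's. The paper first passes to an $\aleph_1$-saturated elementary extension $(K^*,v^*)\succ(K,v)$, takes $K'$ to be the residue field of the finest proper coarsening $v^+$ of $v^*$, and lets $v'$ be the induced valuation; completeness of $(K',v')$ then comes for free from saturation via \cite[Lemma~7.14]{Dries}, and one obtains $K'v'=K^*v^*\succ Kv$ as a (possibly proper) elementary extension. You instead decompose $v=w\circ\bar v$ directly on $K$, apply Proposition~\ref{prop:same_E4} to the coarsening $(K,w)$, then pass to the completion of the rank-one piece $(Kw,\bar v)$ and recover the $\exists_n$-theory via a density/ultrapower argument; this yields $K'v'=Kv$ on the nose. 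Both approaches hinge on Proposition~\ref{prop:same_E4} and hence on separability of the intermediate residue field over $C$; the paper leaves this implicit by deferring to \cite[Proposition~4.11]{ADF23}, whereas you spell it out, and this is indeed the only place where excellence of $\mathcal{O}_u$ is used. Your route is more self-contained and gives the slightly sharper conclusion $K'v'=Kv$, while the paper's is shorter once one is willing to quote \cite{ADF23} and \cite{Dries}.
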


\begin{proof}
This is proven 
exactly as the corresponding statement for \Rfour\
is proven in the second paragraph of the proof of \cite[Proposition 4.11]{ADF23}:
Let $(K,v)\prec (K^*,v^*)$ be an $\aleph_1$-saturated elementary extension, let $K'$ be the residue field of the finest proper coarsening $v^+$ of $v^*$,
note that $v^+|_C$ is trivial so that we can view $C$ as a subfield of $K'$,
and let $v'$ be the valuation induced by $v^*$ on $K'$.
Then $K'v'=K^*v^*\succ Kv$,
${\rm Th}_{\exists_n}(K',C)={\rm Th}_{\exists_n}(K^*,C)={\rm Th}_{\exists_n}(K,C)$
by Proposition~\ref{prop:same_E4},
and $(K',v')$ is complete by the saturation assumption \cite[Lemma 7.14]{Dries}.
\end{proof}

\begin{proposition}\label{prop:main}
Let $(C,u)$ be an equicharacteristic $\ZZ$-valued field with uniformizer $\pi$ such that $\mathcal{O}_u$ is excellent.
Let $(K,v)$ and $(L,w)$
be henselian extensions of $(C,u)$ with uniformizer $\pi$ such that $Kv/Cu$ and $Lw/Cu$ are separable.
If ${\rm Th}_\exists(Kv,Cu)\subseteq{\rm Th}_\exists(Lw,Cu)$ and \RfourNplusone\ holds,
then ${\rm Th}_{\exists_{n+1}}(K,C)\subseteq{\rm Th}_{\exists_{n+1}}(L,C)$
and
${\rm Th}_{\exists_n}(K,v,C)\subseteq{\rm Th}_{\exists_n}(L,w,C)$.
\end{proposition}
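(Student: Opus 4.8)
The plan is to derive both inclusions from the rank-one case, Lemma~\ref{prop:rank_one}, by first replacing $(K,v)$ and $(L,w)$ with complete $\ZZ$-valued fields having the same $\exists_{n+1}$-theory over $C$, and then transferring the statement from the language of rings to the language of valued fields via Proposition~\ref{prop:val_to_ring}; the cost of that last translation is exactly one additional existential quantifier, which is why the ring-theoretic conclusion sits at level $n+1$ while the valued-field conclusion sits at level $n$.

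Concretely, I would first apply Lemma~\ref{lem:reduce_to_rank_one}, with $n$ there replaced by $n+1$ --- this is the only place where \RfourNplusone\ is used --- to each of $(K,v)$ and $(L,w)$ over $(C,u,\pi)$. This produces complete $\ZZ$-valued $(K',v',\pi),(L',w',\pi)\models\Hpi$ extending $(C,u,\pi)$ with $\pi$ a uniformizer, such that $Kv\prec K'v'$ and $Lw\prec L'w'$ (as $\Lring(Cu)$-structures) and ${\rm Th}_{\exists_{n+1}}(K',C)={\rm Th}_{\exists_{n+1}}(K,C)$, ${\rm Th}_{\exists_{n+1}}(L',C)={\rm Th}_{\exists_{n+1}}(L,C)$. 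Since elementary extensions of fields are separable and $Kv/Cu$, $Lw/Cu$ are separable, also $K'v'/Cu$ and $L'w'/Cu$ are separable; moreover $Kv\prec K'v'$ and $Lw\prec L'w'$ together with the hypothesis ${\rm Th}_\exists(Kv,Cu)\subseteq{\rm Th}_\exists(Lw,Cu)$ yield ${\rm Th}_\exists(K'v',Cu)={\rm Th}_\exists(Kv,Cu)\subseteq{\rm Th}_\exists(Lw,Cu)={\rm Th}_\exists(L'w',Cu)$. Now Lemma~\ref{prop:rank_one}, applied to $(K',v')$ and $(L',w')$ over $(C,u)$, gives ${\rm Th}_\exists(K',C)\subseteq{\rm Th}_\exists(L',C)$, hence ${\rm Th}_{\exists_{n+1}}(K',C)\subseteq{\rm Th}_{\exists_{n+1}}(L',C)$, and combining with the two equalities above I obtain the first assertion ${\rm Th}_{\exists_{n+1}}(K,C)\subseteq{\rm Th}_{\exists_{n+1}}(L,C)$.

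For the second assertion I would first note that Proposition~\ref{prop:val_to_ring} holds verbatim with the constants of $C$ (which include $\pi$) in place of the single constant $\varpi$, since its proof only manipulates coefficients of the polynomials involved. Then, given $\sigma\in{\rm Th}_{\exists_n}(K,v,C)$ --- a positive boolean combination of $\exists_n$-$\Lval(C)$-sentences holding in $(K,v,C)$ --- I translate each constituent by this $C$-version of Proposition~\ref{prop:val_to_ring} to obtain a positive boolean combination $\sigma'$ of $\exists_{n+1}$-$\Lring(C)$-sentences with $\Hpi\models\sigma\leftrightarrow\sigma'$. As $(K,v,\pi)\models\Hpi$ and $(K,v,C)\models\sigma$, the $\Lring(C)$-reduct satisfies $(K,C)\models\sigma'$, so $\sigma'\in{\rm Th}_{\exists_{n+1}}(K,C)\subseteq{\rm Th}_{\exists_{n+1}}(L,C)$ by the first assertion; hence $(L,C)\models\sigma'$, and since $(L,w,\pi)\models\Hpi$ and $\Hpi\models\sigma\leftrightarrow\sigma'$, I conclude $(L,w,C)\models\sigma$, i.e.\ $\sigma\in{\rm Th}_{\exists_n}(L,w,C)$.

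I expect the only delicate point to be the index bookkeeping together with checking that the two black-box lemmas apply with the field $C$ (and its residue field $Cu$) in the role of the base: one must verify that the elementary extensions produced by Lemma~\ref{lem:reduce_to_rank_one} are elementary over $Cu$ on residue fields, that separability passes to $K'v'/Cu$ and $L'w'/Cu$, and that the single extra quantifier from Proposition~\ref{prop:val_to_ring} is exactly what forces the hypothesis \RfourNplusone\ (rather than \RfourN) and the shift between levels $n+1$ and $n$ in the conclusion. Everything else is a routine assembly of the preceding results.
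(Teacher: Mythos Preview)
Your proposal is correct and follows essentially the same approach as the paper: reduce to the complete $\ZZ$-valued case via Lemma~\ref{lem:reduce_to_rank_one} (applied at level $n+1$, using \RfourNplusone), invoke Lemma~\ref{prop:rank_one} there, and then pass from $\Lring(C)$ to $\Lval(C)$ via Proposition~\ref{prop:val_to_ring}. The paper's proof is terser and leaves implicit the points you spell out (that Proposition~\ref{prop:val_to_ring} extends harmlessly from $\varpi$ to $C$-constants, and that separability and the residue-field hypothesis transfer along the elementary extensions), but the argument is the same.
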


\begin{proof}
Let $\varphi\in{\rm Th}_{\exists_{n+1}}(K,C)$.
We have to show that $\varphi\in{\rm Th}_{\exists_{n+1}}(L,C)$.
By the assumption \RfourNplusone,
we can assume without loss of generality
that $(K,v)$ and $(L,w)$ are complete $\ZZ$-valued (Lemma~\ref{lem:reduce_to_rank_one}).
Lemma \ref{prop:rank_one} then shows that $\varphi\in {\rm Th}_{\exists_{n+1}}(L,C)$.
This shows that 
${\rm Th}_{\exists_{n+1}}(K,C)\subseteq{\rm Th}_{\exists_{n+1}}(L,C)$.
Finally, every $\psi\in{\rm Th}_{\exists_n}(K,v,C)$ is equivalent in both $(K,v,C)$ and $(L,w,C)$ to some $\varphi\in{\rm Th}_{\exists_{n+1}}(K,C)$ by Proposition \ref{prop:val_to_ring},
and therefore also ${\rm Th}_{\exists_n}(K,v,C)\subseteq{\rm Th}_{\exists_n}(L,w,C)$.
\end{proof}

We now adapt the preceding
proposition
to mixed characteristic unramified henselian valued fields.
This argument is almost quotable from the literature,
e.g.~from \cite[Corollary 5.7]{ADJ24},
but that statement does not allow parameters from a $\ZZ$-valued subfield.
We now briefly repeat the arguments from
\cite{ADJ24},
albeit with minor variations.

\begin{proposition}\label{prop:main_mixed_characteristic}
Let $(C,u)$ be a $\ZZ$-valued field of mixed characteristic with uniformizer $p$, i.e.~unramified.
Let $(K,v)$ and $(L,w)$
be henselian extensions of $(C,u)$, also with uniformizer $p$, such that $Kv/Cu$
is
	separable.
If ${\rm Th}_\exists(Kv,Cu)\subseteq{\rm Th}_\exists(Lw,Cu)$,
then
${\rm Th}_{\exists}(K,v,C)\subseteq{\rm Th}_{\exists}(L,w,C)$.
\end{proposition}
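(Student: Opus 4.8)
The strategy is to mirror the equicharacteristic argument of Proposition~\ref{prop:main}, replacing the rank-one reduction and the rank-one comparison by their mixed characteristic unramified counterparts. The key structural fact to exploit is that an unramified mixed characteristic complete discretely valued field with perfect — or more generally, with a suitable — residue field is determined by its residue field together with the valued subfield $(C,u)$, via the Cohen-ring / Witt-vector construction; this is precisely what makes the statement ``almost quotable'' from \cite[Corollary 5.7]{ADJ24}. So the proof breaks into three moves: (i) reduce from arbitrary henselian $(K,v)$, $(L,w)$ to complete $\ZZ$-valued ones without changing the existential theory over $C$ and while only enlarging the residue field elementarily; (ii) prove the rank-one comparison for complete unramified $\ZZ$-valued extensions of $(C,u)$; (iii) assemble (i) and (ii) into the general statement, using Proposition~\ref{prop:val_to_ring} to pass between $\Lval(\varpi)$ and $\Lring(\varpi)$ for the language with the distinguished uniformizer $p$.

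For step (i), I would take an $\aleph_1$-saturated elementary extension $(K,v)\prec(K^*,v^*)$, pass to the residue field $K'$ of the finest proper coarsening $v^+$ of $v^*$ — which is trivial on $C$ so $C$ embeds into $K'$ — and equip $K'$ with the induced valuation $v'$. As in Lemma~\ref{lem:reduce_to_rank_one}, saturation gives completeness of $(K',v')$ by \cite[Lemma 7.14]{Dries}, one has $K'v'=K^*v^*\succ Kv$, and $(K',v')$ is again unramified mixed characteristic with uniformizer $p$ extending $(C,u)$. The point where the equicharacteristic proof invoked Proposition~\ref{prop:same_E4} (and hence \RfourN) must here be replaced: in the mixed characteristic unramified case the analogous statement that the finest proper coarsening does not change the relevant existential theory should follow from the Ax--Kochen/Ershov-type analysis of unramified henselian valued fields (Basarab, or the coarsening arguments in \cite{ADJ24}), without any resolution-of-singularities input — this is exactly why \RfourN\ disappears from the hypotheses. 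Doing the same to $(L,w)$, we may assume both fields are complete $\ZZ$-valued from the outset.

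For step (ii), given complete unramified $\ZZ$-valued extensions $(K,v)$, $(L,w)$ of $(C,u)$ with $Kv/Cu$ separable and $\Th_\exists(Kv,Cu)\subseteq\Th_\exists(Lw,Cu)$, I would run the embedding argument: use the hypothesis and saturation to find an elementary extension $(L,w)\prec(L^*,w^*)$ together with an $\Lring(Cu)$-embedding of (a suitable finitely generated piece witnessing any given existential sentence of) $Kv$ into $L^*w^*$, then lift this through the Cohen rings over $\mathcal{O}_u$ — using separability of $Kv/Cu$ to control the lift — to an embedding of $(K,v)$ over $(C,u)$ into $(L^*,w^*)$; completeness and unramifiedness are what make the residue-field embedding lift. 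This is the analogue of Lemma~\ref{prop:rank_one}, and it transfers existential sentences with parameters from $C$. Finally, for step (iii), combine (i) and (ii) to get $\Th_\exists(K,C)\subseteq\Th_\exists(L,C)$ in $\Lring(p)$, hence by Proposition~\ref{prop:val_to_ring} (with $\varpi=p$, using that $p$ is a uniformizer so $(K,v,p),(L,w,p)\models\Hpi$) also $\Th_\exists(K,v,C)\subseteq\Th_\exists(L,w,C)$ in $\Lval(\varpi)$.

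The main obstacle is step (ii), specifically lifting the residue-field embedding to an embedding of valued fields \emph{over the $\ZZ$-valued parameter field $(C,u)$}. The plain statement in \cite{ADJ24} handles $C=\QQ$ or a trivially valued base; here $u$ is a nontrivial $\ZZ$-valuation on $C$, so one must check that the Cohen ring of $Kv$ over $\mathcal{O}_u$ exists and is functorial enough that the chosen residue embedding extends, and that the separability hypothesis on $Kv/Cu$ is exactly what guarantees the extension (via formal smoothness of the Cohen ring, as in the equicharacteristic case with $C(\!(t)\!)$ replaced by the unramified Cohen ring). The excellence hypothesis familiar from the equicharacteristic statements is not needed here because unramifiedness already pins down the structure; this should be remarked.
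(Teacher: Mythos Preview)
Your plan is correct and matches the paper's proof closely: the paper carries out your step~(i) by observing that the coarsening $v^{+}$ is equicharacteristic zero, so a section $\zeta\colon K'\to K^{*}$ extending ${\rm id}_{C}$ exists, and then invokes \cite[Theorem~5.14]{ADJ24} (using $K'v'=K^{*}v^{*}$ and $v'K'=\ZZ v(p)\prec_\exists v^{*}K^{*}$) to conclude $(K',v')\prec_\exists(K^{*},v^{*})$ as $\Lval(C)$-structures; for step~(ii) it saturates $Lw$ (or treats the finite case via Lemma~\ref{lem:E1}) to embed $Kv$ into $Lw$ over $Cu$, and then lifts via \cite[Lemma~4.11]{ADJ24} to an $\Lval$-embedding $(K,v)\hookrightarrow(L,w)$ over $(C,u)$ --- precisely the Cohen-ring lift you flag as the main obstacle, with separability of $Kv/Cu$ and completeness of $(L,w)$ as the needed hypotheses. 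Because the paper works in $\Lval(C)$ throughout, your step~(iii) detour through $\Lring(p)$ and Proposition~\ref{prop:val_to_ring} is unnecessary.
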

\begin{proof}
	Let $(K,v)\prec(K^{*},v^{*})$ be an $\aleph_{1}$-saturated elementary extension,
    let $K'=K^*v^+$ be the residue field of the finest proper coarsening $v^{+}$ of $v^{*}$,
	and note that $v^+|_C$ is trivial, so we can identify $C$ with a subfield of $K'$.
	First we claim that $\Th_{\exists}(K',v',C)=\Th_{\exists}(K^{*},v^{*},C)$,
	where $v'$ is the valuation induced on $K'$ by $v^{*}$.
	Indeed, since $v^{+}$ is henselian and of equal characteristic zero,
    ${\rm id}_C$ extends to a section $\zeta\colon K'\rightarrow K^{*}$ of ${\rm res}_{v^{+}}$,
	and this yields an embedding $(K',v',C)\rightarrow(K^{*},v^{*},C)$ of $\Lval(C)$-structures.
	By the saturation hypothesis, $(K',v',C)$ is complete \cite[Lemma 7.14]{Dries}.
    As $K'v'=K^*v^*$ and $v'K'=\mathbb{Z}v(p)\prec_\exists v^*K^*$ \cite[Corollary 4.1.4]{PD},
	the claim then follows from \cite[Theorem 5.14]{ADJ24},
	which shows that
	$(K',v')\prec_{\exists}(K^{*},v^{*})$.
	Applying the same argument to $(L,w)$,
	we may henceforth assume without loss of generality that both $(K,v)$ and $(L,w)$ are $\ZZ$-valued and complete,
	and even that $Lw$ is $|Kv|^{+}$-saturated or finite.
	  In the former case, by the assumption ${\rm Th}_\exists(Kv,Cu)\subseteq{\rm Th}_\exists(Lw,Cu)$, there is an $\Lring$-embedding $\phi\colon Kv\rightarrow Lw$ over $Cu$ \cite[Lemma 5.2.1]{CK};
    in the latter case, the assumption implies that also $Kv$ is finite, in particular algebraic over $Cu$, and we get  an $\Lring$-embedding $\phi\colon Kv\rightarrow Lw$ over $Cu$ also in this case (see e.g.~Lemma \ref{lem:E1} below).
	Since we have assumed that $Kv/Cu$ is separable, and that $(L,w)$ is complete,
	we may apply \cite[Lemma 4.11]{ADJ24}
	to obtain an $\Lval$-embedding $\Phi\colon(K,v)\rightarrow(L,w)$ over $(C,u)$,
	which finishes the proof.
\end{proof}

We prove two lemmas that we will use several times, and then state the main result of this section.

\begin{lemma}\label{lem:prim_elem}
Let $E/F$ be a finitely generated separable field extension.
Then $E$ is generated over $F$ by at most ${\rm trdeg}(E/F)+1$ many elements.
\end{lemma}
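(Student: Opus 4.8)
The plan is to combine the theory of separating transcendence bases with the primitive element theorem. First I would choose a separating transcendence basis $t_1,\dots,t_d$ of $E/F$, where $d={\rm trdeg}(E/F)$; this is possible precisely because $E/F$ is separably generated, which for a finitely generated extension is equivalent to being separable (cf.\ the discussion of separability in \cite{EP} or a standard reference). By definition of a separating transcendence basis, $E$ is a finite \emph{separable} algebraic extension of the purely transcendental subfield $F_0:=F(t_1,\dots,t_d)$.

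Next I would invoke the primitive element theorem: since $E/F_0$ is finite and separable, there is a single element $\alpha\in E$ with $E=F_0(\alpha)=F(t_1,\dots,t_d,\alpha)$. This already exhibits $E$ as generated over $F$ by $d+1={\rm trdeg}(E/F)+1$ elements, namely $t_1,\dots,t_d,\alpha$, which is exactly the claimed bound.

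One small point to address is the edge case $d=0$, i.e.\ $E/F$ finite separable: then $F_0=F$ and the primitive element theorem directly gives $E=F(\alpha)$, one generator, consistent with the bound $d+1=1$. I expect there to be no serious obstacle here; the only thing to be careful about is citing the right form of "finitely generated separable $\Rightarrow$ separably generated" (so that a separating transcendence basis exists) rather than taking an arbitrary transcendence basis, over which $E$ need not be separable. With that in hand the argument is immediate.
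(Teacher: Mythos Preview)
Your proposal is correct and follows exactly the same approach as the paper: pick a separating transcendence basis $t_1,\dots,t_d$ (available because finitely generated separable extensions are separably generated), then apply the primitive element theorem to the finite separable extension $E/F(t_1,\dots,t_d)$. The paper's proof is just a one-line version of what you wrote.
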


\begin{proof}
There exists a separating transcendence basis $t_1,\dots,t_d$ of $E/F$, 
and $E/F(t_1,\dots,t_d)$ is finite and separable, so the claim follows from the primitive element theorem.    
\end{proof}

\begin{lemma}\label{lem:E1}
Let $K,L$ be field extensions of a field $C$. 
The following are equivalent:
\begin{enumerate}
    \item ${\rm Th}_{\exists_1}(K,C)\subseteq{\rm Th}_{\exists_1}(L,C)$
    \item
\begin{enumerate}
 \item There exists a $C$-embedding of the relative algebraic closure $\overline{C}^K$ of $C$ in $K$ into $L$, and
 \item if $K$ is infinite, then so is $L$.
\end{enumerate}
\end{enumerate}
\end{lemma}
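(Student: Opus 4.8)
The plan is to prove the two implications separately, using the standard characterization of $\exists_1$-sentences. First recall what an $\exists_1$-$\Lring(C)$-sentence looks like: it is a positive boolean combination of sentences of the form $\exists x\,\psi(x)$ with $\psi$ quantifier-free, and such a $\psi$, over a field, amounts to a finite disjunction of systems of polynomial equations and inequations in a single variable $x$ with coefficients from $C$. A single such system $\exists x\,\bigl(\bigwedge_i f_i(x)=0 \wedge \bigwedge_j g_j(x)\neq 0\bigr)$ is satisfied in a field $M\supseteq C$ in exactly two ways: either the $f_i$ have a common root that avoids the $g_j$, in which case that root is algebraic over $C$ (if some $f_i\neq 0$) and lies in $\overline{C}^M$; or all the $f_i$ are the zero polynomial, and then the sentence just asserts that $M$ has more elements than the total degree of the $g_j$, i.e.\ it is a statement about the cardinality of $M$ that holds as soon as $M$ is infinite, or as soon as $|M|$ exceeds a fixed finite bound.

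For the implication $(2)\Rightarrow(1)$: suppose $\varphi\in\Th_{\exists_1}(K,C)$; we must show $\varphi\in\Th_{\exists_1}(L,C)$. It suffices to treat a single $\exists_1$-sentence $\exists x\,\psi(x)$ true in $K$. If the witness in $K$ can be taken algebraic over $C$, it lies in $\overline{C}^K$, and its image under the embedding $\overline{C}^K\hookrightarrow L$ over $C$ given by (2a) is a witness in $L$, since quantifier-free truth is preserved by $C$-embeddings. Otherwise the sentence is (modulo passing to a disjunct with all $f_i=0$) a statement ``$|M|>N$'' for some $N$, and since $K\models$ it, $K$ has more than $N$ elements; if $K$ is infinite we use (2b) to conclude $L$ is infinite, hence $L\models\exists x\,\psi(x)$; and if $K$ is finite with $|K|>N$, then $N$ is a concrete finite bound, but we must be slightly careful — here we instead note that in the finite-$K$ case $\overline{C}^K=K$, so (2a) already embeds all of $K$ into $L$, forcing $|L|\geq|K|>N$. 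Combining the cases over all disjuncts and then over positive boolean combinations gives $\varphi\in\Th_{\exists_1}(L,C)$.

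For the converse $(1)\Rightarrow(2)$: to get (2b), observe that ``there are at least $N$ distinct elements'' is expressible by an $\exists_1$-$\Lring$-sentence (indeed by $\exists x\,(x=x \wedge \prod_{i=1}^{N-1}(x-\underline{i})\neq 0)$ after adding the constants $0,1,\dots,N-1$ as $C$-terms, or more simply using that for any $N$ one can write down such a sentence); if $K$ is infinite it satisfies all of these, hence so does $L$, so $L$ is infinite. To get (2a), enumerate $\overline{C}^K$ and embed it into $L$ element by element, or more efficiently invoke a compactness/diagram argument: it is enough to show every finitely generated subextension $C(\alpha)\subseteq\overline{C}^K$ embeds into $L$ over $C$, since $\overline{C}^K$ is a directed union of such and $L$ being a field the partial embeddings can be amalgamated (the relative algebraic closure is the union of a chain of finite extensions, and an embedding of $C(\alpha)$ into $L$ is determined by where a primitive element goes). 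For a single $\alpha$ algebraic over $C$ with minimal polynomial $\mu\in C[x]$, the sentence $\exists x\,\mu(x)=0$ is in $\Th_{\exists_1}(K,C)$, hence in $\Th_{\exists_1}(L,C)$, so $\mu$ has a root $\beta$ in $L$ and $\alpha\mapsto\beta$ defines the required $C$-embedding $C(\alpha)\hookrightarrow L$; assembling these compatibly along the chain yields the embedding of $\overline{C}^K$.

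The main obstacle is the bookkeeping in $(2)\Rightarrow(1)$ for the ``all $f_i$ zero'' disjuncts, i.e.\ making sure the cardinality case is handled uniformly: one has to observe that any $\exists_1$-sentence that is true in $K$ but has no algebraic witness there is, on the relevant disjunct, a pure cardinality lower bound, and that (2a)+(2b) together suffice to transfer every such bound to $L$ — the infinite case via (2b), and the finite case because then $\overline{C}^K=K$ embeds wholesale into $L$ by (2a). Once this dichotomy is cleanly stated, the rest is routine preservation-under-embedding and the fact that positive boolean combinations cause no trouble since we argue disjunct by disjunct on the positive side.
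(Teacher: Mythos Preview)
Your argument for $(2)\Rightarrow(1)$ follows the same case split as the paper's (algebraic witness versus transcendental witness), and your treatment of $(1)\Rightarrow(2b)$ has the right shape, though your proposed sentence $\exists x\,\prod_{i=1}^{N-1}(x-\underline{i})\neq0$ does not actually express ``more than $N$ elements'' in characteristic $p$ once $N\geq p$, since the integers $\underline{0},\dots,\underline{N-1}$ collapse modulo $p$. The paper uses $\exists x\,(x^{p^n}-x\neq0)$ instead, which says precisely $M\not\subseteq\mathbb{F}_{p^n}$ and works uniformly.

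The genuine gap is in $(1)\Rightarrow(2a)$. You show that each \emph{simple} subextension $C(\alpha)\subseteq\overline{C}^K$ embeds into $L$ over $C$, and then write that these embeddings ``can be amalgamated'' because $\overline{C}^K$ is a directed union of such. There are two problems. First, you are implicitly assuming that every finitely generated (i.e.\ finite) subextension of $\overline{C}^K/C$ is simple; this is the primitive element theorem, which fails when $\overline{C}^K/C$ is inseparable --- and the lemma is stated for arbitrary extensions $K/C$, so this case is live. The simple subextensions $C(\alpha)$ are then not cofinal among all finite subextensions, and knowing that each of them embeds does not suffice for the inverse-limit argument. Second, even granting separability, the phrase ``assembling these compatibly along the chain'' hides the actual content: different roots of $\mu_\alpha$ in $L$ give different embeddings of $C(\alpha)$, and there is no a priori reason the embeddings you have produced are coherent. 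One needs the compactness fact that an inverse limit of nonempty finite sets (here the sets $\mathrm{Emb}_C(E,L)$ over finite $E\subseteq\overline{C}^K$) is nonempty, and for that one first needs each such set to be nonempty --- which loops back to the first problem in characteristic $p$.

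The paper does not attempt to prove $(1)\Rightarrow(2a)$ directly at all: it simply cites \cite[Corollary~20.6.4]{FJ}, which is exactly the statement that the $\exists_1$-hypothesis forces an embedding of $\overline{C}^K$ into $L$ over $C$. This is a short but genuine theorem (typically proved via a profinite compactness argument on the absolute Galois group), not a bookkeeping exercise, and your sketch does not reproduce it.
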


\begin{proof}
$(1)\Rightarrow(2a)$: See \cite[Corollary 20.6.4]{FJ}.

$(1)\Rightarrow(2b)$: If $K$ is infinite of characteristic $p$, then $p=0\wedge\exists x(x^{p^n}-x\neq0)\in{\rm Th}_{\exists_1}(K)$ for every $n$,
hence if ${\rm Th}_{\exists_1}(K,C)\subseteq{\rm Th}_{\exists_1}(L,C)$, then also $L$ is of characteristic $p$ and infinite.

$(2)\Rightarrow(1)$: Let $\varphi\in{\rm Th}_{\exists_1}(K,C)$.
Without loss of generality, $\varphi$ is of the form $\exists x\psi(x)$ with $\psi$ of the form $g(x)\neq0\wedge\bigwedge_{i=1}^n f_i(x)=0$ for $g,f_1,\dots,f_n\in C[x]$.
Let $a\in K$ such that $K\models\psi(a)$, and let $\alpha\colon\overline{C}^K\rightarrow L$ be the embedding given by (2a).
If $a$ is algebraic over $C$, then $L\models\psi(\alpha(a))$;
otherwise, $K$ is infinite, $g\neq 0$ and $f_i=0$ for all $i$,
and then also $L$ is infinite by (2b), hence there exists $b\in L$ with $g(b)\neq 0$, and therefore $L\models\psi(b)$.
\end{proof}

\begin{theorem}\label{thm:En}
Let $(C,u)$ be a $\mathbb{Z}$-valued field with uniformizer $\pi$,
and let $(K,v)$ and $(L,w)$ be henselian extensions of $(C,u)$ with uniformizer $\pi$ such that $Kv/Cu$ and $Lw/Cu$ are separable.
\begin{enumerate}
\item
Assume that $\mathcal{O}_u$ is equicharacteristic and excellent.
\begin{enumerate}
\item\label{case1}
If ${\rm Th}_{\exists_n}(Kv,Cu)\subseteq{\rm Th}_{\exists_n}(Lw,Cu)$
and \RfourN\ holds,
then ${\rm Th}_{\exists_n}(K,C)\subseteq{\rm Th}_{\exists_n}(L,C)$.
\item\label{case2}
If ${\rm Th}_{\exists_n}(Kv,Cu)\subseteq{\rm Th}_{\exists_n}(Lw,Cu)$ and \RfourNplusone\ holds,
then ${\rm Th}_{\exists_n}(K,v,C)\subseteq{\rm Th}_{\exists_n}(L,w,C)$.
\end{enumerate}
\item
Assume that $\mathcal{O}_u$ is of mixed characteristic and $\pi=p={\rm char}(Cu)$.
\begin{enumerate}
 \item\label{case3} If ${\rm Th}_{\exists_n}(Kv,Cu)\subseteq{\rm Th}_{\exists_n}(Lw,Cu)$, 
then ${\rm Th}_{\exists_n}(K,v,C)\subseteq{\rm Th}_{\exists_n}(L,w,C)$.
\end{enumerate}
\end{enumerate}
\end{theorem}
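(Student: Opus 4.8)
The statement of Theorem~\ref{thm:En} collects together, in a uniform format, the three monotonicity results proven in the propositions of this section, so the plan is essentially to reduce each case to the corresponding proposition, taking care of the differences in hypotheses (notably, that the propositions sometimes assume $Lw/Cu$ separable, or assume $\mathrm{Th}_\exists$ rather than $\mathrm{Th}_{\exists_n}$ of the residue fields). First I would handle \eqref{case2}: this is exactly the second conclusion of Proposition~\ref{prop:main}, provided its hypothesis $\mathrm{Th}_\exists(Kv,Cu)\subseteq\mathrm{Th}_\exists(Lw,Cu)$ can be weakened to $\mathrm{Th}_{\exists_n}(Kv,Cu)\subseteq\mathrm{Th}_{\exists_n}(Lw,Cu)$. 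The point is that Proposition~\ref{prop:main}'s proof only uses the full existential theory of the residue fields through Lemma~\ref{prop:rank_one}, which in turn is quoted from \cite{ADF23}; one should check (or cite) that the $\exists$ there can be replaced by $\exists_n$ throughout, or alternatively replace $(K,v)$ and $(L,w)$ by suitable elementary extensions with $\aleph_1$-saturated residue fields, so that $\mathrm{Th}_{\exists_n}$-inclusion upgrades to an actual $\Lring(Cu)$-embedding $Kv\hookrightarrow Lw$ (via Lemma~\ref{lem:E1} in the finite case, \cite[Lemma~5.2.1]{CK} otherwise), from which the full $\mathrm{Th}_\exists$-inclusion follows for free. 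Once this is in place, \eqref{case1} is immediate by combining \eqref{case2} with Proposition~\ref{prop:val_to_ring}: every $\exists_n$-$\Lring(C)$-sentence is in particular an $\exists_n$-$\Lval(C)$-sentence, so $\mathrm{Th}_{\exists_n}(K,C)\subseteq\mathrm{Th}_{\exists_n}(K,v,C)$, and the reverse embedding into $L$ is given by \eqref{case2}; but I need \RfourN\ rather than \RfourNplusone\ here, so I would instead run the argument of Proposition~\ref{prop:main} directly at level $\exists_n$, invoking Lemma~\ref{lem:reduce_to_rank_one} (which needs only \RfourN) to reduce to the complete $\ZZ$-valued case and then Lemma~\ref{prop:rank_one}, without passing through $\exists_{n+1}$.

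For part (2), case \eqref{case3}, the work is already done by Proposition~\ref{prop:main_mixed_characteristic}: that proposition gives $\mathrm{Th}_\exists(K,v,C)\subseteq\mathrm{Th}_\exists(L,w,C)$, which a fortiori yields $\mathrm{Th}_{\exists_n}(K,v,C)\subseteq\mathrm{Th}_{\exists_n}(L,w,C)$ for every $n$. The only gap to close is that Proposition~\ref{prop:main_mixed_characteristic} has the hypothesis $\mathrm{Th}_\exists(Kv,Cu)\subseteq\mathrm{Th}_\exists(Lw,Cu)$, whereas here we are given only $\mathrm{Th}_{\exists_n}(Kv,Cu)\subseteq\mathrm{Th}_{\exists_n}(Lw,Cu)$. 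As in the equicharacteristic case, I would pass to an $\aleph_1$-saturated (or $|Kv|^+$-saturated) elementary extension of $(L,w)$ so that the weaker $\exists_n$-inclusion already produces an $\Lring(Cu)$-embedding $Kv\hookrightarrow Lw$ — using Lemma~\ref{lem:E1} when $Kv$ is finite and \cite[Lemma~5.2.1]{CK} when $Kw$ is $|Kv|^+$-saturated and infinite — and then re-run the embedding-theoretic part of the proof of Proposition~\ref{prop:main_mixed_characteristic} (the application of \cite[Lemma~4.11]{ADJ24}) verbatim. Here one also uses that the residue field separability hypothesis on $Kv/Cu$ is retained, and that no separability assumption on $Lw/Cu$ is needed, matching the statement.

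The main obstacle I anticipate is bookkeeping around the asymmetry in separability and saturation hypotheses between the propositions and the theorem, together with making sure that replacing $\mathrm{Th}_\exists$ of the residue fields by $\mathrm{Th}_{\exists_n}$ really is harmless: the subtle point is that an $\exists_n$-elementary inclusion of residue fields does \emph{not} in general give an embedding, but after passing to a sufficiently saturated elementary extension of the target it does (this is where Lemma~\ref{lem:E1} and the finite/infinite case split enter), and an embedding of residue fields is exactly what all the downstream lemmas — Lemma~\ref{prop:rank_one}, \cite[Lemma~4.11]{ADJ24} — actually consume. Once that observation is made, each of the three cases is a short citation of the corresponding proposition. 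I would therefore structure the proof as: \eqref{case1} by the argument of Proposition~\ref{prop:main} run at level $\exists_n$ using Lemmas~\ref{lem:reduce_to_rank_one} and \ref{prop:rank_one}; \eqref{case2} by Proposition~\ref{prop:main} (resp.\ the same argument plus Proposition~\ref{prop:val_to_ring}); and \eqref{case3} by Proposition~\ref{prop:main_mixed_characteristic}, in each case prefacing with the saturation reduction that converts the $\exists_n$-inclusion of residue fields into an $\Lring(Cu)$-embedding.
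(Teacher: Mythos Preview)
Your plan has a genuine gap at the step where you propose to ``upgrade'' the hypothesis ${\rm Th}_{\exists_n}(Kv,Cu)\subseteq{\rm Th}_{\exists_n}(Lw,Cu)$ to an $\Lring(Cu)$-embedding $Kv\hookrightarrow Lw^*$ by passing to a saturated elementary extension of $(L,w)$. This does not work: an embedding of $Kv$ into a saturated $Lw^*$ over $Cu$ exists precisely when the \emph{full} existential $\Lring(Cu)$-theory of $Kv$ is contained in that of $Lw$ (this is what \cite[Lemma~5.2.1]{CK} actually gives), because realizing the quantifier-free type of a tuple $\bar a\in Kv$ of length $m$ requires the $\exists_m$-sentence $\exists\bar x\,\psi(\bar x)$ to hold in $Lw$, and $m$ is not bounded by $n$. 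Lemma~\ref{lem:E1} only helps in the finite-residue-field case, where $Kv$ is algebraic over $Cu$; in general there is no way to manufacture an embedding from $\exists_n$-inclusion alone. For the same reason, the alternative you float---replacing $\exists$ by $\exists_n$ throughout the proof of Lemma~\ref{prop:rank_one}---does not go through either, since that proof (via \cite{ADF23}) consumes precisely an embedding of residue fields.

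The paper's proof avoids this obstruction by never attempting to embed $Kv$ itself. Instead it reduces to substructures of $(K,v,C)$ generated by $n$ elements $a_1,\dots,a_n$, sets $E={\rm Quot}(C[a_1,\dots,a_n])$, and applies Proposition~\ref{prop:main} (resp.\ \ref{prop:main_mixed_characteristic}) with $(E^h,v^h)$ in place of $(K,v)$. The point is that for \emph{this} field one can prove the full inclusion ${\rm Th}_\exists(Ev,Cu)\subseteq{\rm Th}_\exists(Lw,Cu)$: the Abhyankar inequality bounds ${\rm trdeg}(Ev/Cu)\leq n$, and then the ruled residue theorem together with Lemma~\ref{lem:prim_elem} shows that every finitely generated subextension of (a suitable $E'$ existentially closed in) $Ev$ over $Cu$ is generated by at most $n$ elements, so the hypothesis ${\rm Th}_{\exists_n}(Kv,Cu)\subseteq{\rm Th}_{\exists_n}(Lw,Cu)$ suffices. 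This transcendence-degree bookkeeping on the residue side is the missing idea in your proposal.
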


\begin{proof}
We prove \ref{case1}, \ref{case2} and \ref{case3} simultaneously
and follow the idea of \cite[Lemma 3.23]{AFfragments}.
It suffices to show that ${\rm Th}_{\exists_n}(M')\subseteq{\rm Th}_{\exists_n}(L,w,C)$ for every substructure $M'$ of $(K,C)$ respectively of $(K,v,C)$ generated by at most $n$ elements $a_1,\dots,a_n$, cf.~\cite[Lemma 3.22]{AFfragments}.
Let $R_0=C[a_1,\dots,a_n]$ be the subring they generate over $C$.
Let $E:={\rm Quot}(R_0)$ 
and note that 
$$
 {\rm Th}_{\exists_n}(M')\subseteq{\rm Th}_{\exists_n}(E,v,C)\subseteq{\rm Th}_{\exists_n}(E^h,v^h,C),
$$ 
where $(E^h,v^h)$ denotes the henselization of $(E,v)$ in $(K,v)$.
We will show that
\begin{equation}\label{eqn:1}
 {\rm Th}_\exists(Ev,Cu)\subseteq{\rm Th}_\exists(Lw,Cu),
\end{equation}
which will conclude the proof in all three cases:
In case \ref{case1},
by Proposition~\ref{prop:main} (using \RfourN), \eqref{eqn:1} will imply that
${\rm Th}_{\exists_n}(E^h,C)\subseteq{\rm Th}_{\exists_n}(L,C)$.
In case \ref{case2},
similarly by Proposition~\ref{prop:main} (now using \RfourNplusone), \eqref{eqn:1} will imply that
${\rm Th}_{\exists_n}(E^h,v^h,C)\subseteq{\rm Th}_{\exists_n}(L,w,C)$.
In case \ref{case3}, 
we apply instead Proposition~\ref{prop:main_mixed_characteristic}
to obtain even
$\Th_{\exists}(E^h,v^h,C)\subseteq\Th_{\exists}(L,w,C)$.

Now we prove \eqref{eqn:1}.
As $E=C(a_1,\dots,a_n)$, we have ${\rm trdeg}(E/C)\leq n$
and thus ${\rm trdeg}(Ev/Cu)\leq n$ by the Abyhankar inequality \cite[Theorem 3.4.3]{EP}. 
If ${\rm trdeg}(Ev/Cu)<n$, let $E':=Ev$.
Otherwise, ${\rm trdeg}(Ev/Cu)=n$ and $a_1,\dots,a_n$ are algebraically independent over $C$.
In particular, $E/C(a_1,\dots,a_{n-1})$ is a simple purely transcendental extension,
and then the ruled residue theorem \cite{Ohm}
implies that 
$Ev$ is a simple purely transcendental extension of 
a finite extension $E'$ of $C(a_1,\dots,a_{n-1})v$,
say $Ev=E'(t)$ with $t$ transcendental over $E'$.
If $E'$ is infinite, we have that
$E'\prec_\exists Ev$ \cite[Example 3.1.2]{Ershov}.
Therefore, except in the special case 
\begin{equation}\label{eqn:star}\tag{$\ast$}
n=1,\; Cu\mbox{ is finite, and }Ev=E'(t)\mbox{ with }E'/Cu\mbox{ finite }
\end{equation}
we have found $E'$ with ${\rm Th}_\exists(E',Cu)={\rm Th}_\exists(Ev,Cu)$ and ${\rm trdeg}(E'/Cu)<n$.

Let $R\subseteq E'$ be a subring that is a finitely generated extension of $Cu$.
Then $F:={\rm Quot}(R)$ is a subfield of $E'$ that is finitely generated over $Cu$.
Moreover, ${\rm trdeg}(F/Cu)\leq {\rm trdeg}(E'/Cu)<n$.
Since $Kv/Cu$ is separable, so is $F/Cu$, so since it is also finitely generated,
it is generated by at most $n$ elements (Lemma \ref{lem:prim_elem}), say $b_1,\dots,b_n$.
Then $S:=Cu[b_1,\dots,b_n]$ 
is a substructure of $(Kv,Cu)$ generated by $n$ elements,
so the assumption ${\rm Th}_{\exists_n}(Kv,Cu)\subseteq{\rm Th}_{\exists_n}(Lw,Cu)$ implies that
${\rm Th}_\exists(S,Cu)\subseteq{\rm Th}_\exists(Lw,Cu)$
\cite[Lemma 3.22(b)]{AFfragments}.
Since $F={\rm Quot}(S)$, this implies that
${\rm Th}_\exists(F,Cu)\subseteq{\rm Th}_\exists(Lw,Cu)$.
Since $R$ was an arbitrary finitely generated substructure of $(E',Cu)$,
this shows that ${\rm Th}_\exists(Ev,Cu)={\rm Th}_\exists(E',Cu)\subseteq{\rm Th}_\exists(Lw,Cu)$, hence \eqref{eqn:1} holds.

We finally argue that \eqref{eqn:1} also holds in the remaining special case \eqref{eqn:star}. 
The assumption
${\rm Th}_{\exists_{1}}(Kv,Cu)\subseteq{\rm Th}_{\exists_{1}}(Lw,Cu)$
implies that there exists an embedding of the relative algebraic closure of $Cu$ in $Kv$ into $Lw$ over $Cu$ (Lemma \ref{lem:E1}), in particular that there exists an embedding $\alpha\colon E'\rightarrow Lw$ over $Cu$.
Moreover, it also implies that since $Kv$ is infinite (as it contains $Ev=E'(t)$) and therefore contains for every $n$ an element that is not a zero of $X^{p^n}-X$, also $Lw$ is infinite. Thus $\alpha$ extends to an embedding $Ev\rightarrow Lw^*$ for an elementary extension $(Lw,Cu)\prec(Lw^*,Cu)$, and therefore
${\rm Th}_\exists(Ev,Cu)\subseteq{\rm Th}_\exists(Lw^*,Cu)={\rm Th}_\exists(Lw,Cu)$.
\end{proof}

\begin{remark}
Note that in Theorem \ref{thm:En}, the conclusion of (1a) for $n=n_0+1$ implies the conclusion of (1b) for $n=n_0$ (via Proposition~\ref{prop:val_to_ring}), but the assumption of (1a) for $n=n_0+1$ is stronger than the assumption of (1b) for $n=n_0$, so these seem to be two independent statements.
\end{remark}

\begin{remark}
At least part (1a) of Theorem~\ref{thm:En} is formulated under the weakest consequence of resolution of singularities under which it holds, in the sense that
if ${\rm Th}_{\exists_n}(Kv,Cu)\subseteq{\rm Th}_{\exists_n}(Lw,Cu)$
implies
${\rm Th}_{\exists_n}(K,C)\subseteq{\rm Th}_{\exists_n}(L,C)$ for every such $C,K,L$,
then \RfourN\ holds,
as can be seen by 
following and adapting the argument in
\cite[Remark 4.17]{ADF23}. 
\end{remark}

\begin{remark}\label{rem:char0}
If ${\rm char}(Cu)=0$, then the assumption \RfourN\ respectively \RfourNplusone\ in Theorem~\ref{thm:En}(1) can be removed, cf.~\cite[Remark 4.18, Remark 2.4]{ADF23} and see also \cite[Lemma 3.18(b)(iii)]{AFfragments}.   
\end{remark}

\section{Monotonicity for the $\exists_n\exists_1$-fragment}
\label{sec:EnE1}

\noindent
We now consider the fragment which in the notation introduced in \cite[Definition 2.3]{AFAE} is denoted $\exists_n\exists_1$: 
For a language $\mathfrak{L}$, 
an {\em $\exists_n\exists_1$-$\mathfrak{L}$-formula} is a formula of the form
$\exists x_1,\dots,x_m\psi$ where $m\leq n$ and $\psi$ is in the $\exists_1$-fragment,
 i.e.~a positive boolean combination of formulas of the form $\eta$ or $\exists y \eta$ with $\eta$ quantifier-free,
 and ${\rm Th}_{\exists_n\exists_1}(K)$ denotes the positive boolean combinations of $\exists_n\exists_1$-$\mathfrak{L}$-sentences that hold in $K$.
Note that every $\exists_{n+1}$-formula is an $\exists_n\exists_1$-formula but not vice versa.

In what follows we will make use of the machinery set up in \cite{DDF}, in particular the notion of {\em essential fiber dimension} ${\rm efd}_T(\varphi)$ of a formula with respect to a theory $T$ (\cite[Definition 4.1]{DDF}): 
For a field $F$, an $\Lring(F)$-theory $T$ of extensions of $F$, and an existential $\Lring(F)$-sentence $\varphi$, ${\rm efd}_T(\varphi)$ is the smallest $d$ such that if $\varphi$ holds in some $E\models T$, there exists a subextension $E_0$ of $E/F$ with ${\rm trdeg}(E_0/F)\leq d$ such that $\varphi$ holds in every $E'\models T$ into which $E_0$ embeds over $F$.
If every such subextension is again a model of $T$, this is equivalent to $E_0\models\varphi$.
Without loss of generality, one can take $E_0/F$ finitely generated, see \cite[Lemma 4.3]{DDF}.
We observe (and use without further comment) that
the class of all field extensions (respectively, all separable field extensions) of $F$ as $\Lring(F)$-structures is elementary.

For future use, we state two lemmas regarding the essential fiber dimension in greater generality than what we will use here.
Lemma~\ref{lem:efd} for $\exists_n\exists_1$-sentences follows in the special case of $\exists_{n+1}$-sentences
directly from
\cite[Propositions 4.9, 4.11]{DDF}.
Lemma~\ref{lem:efd_sep} contains no ideas that are not in \cite{DDF}, but is not spelled out there.

\begin{lemma}\label{lem:efd}
Let $F$ be a field,
	$\Sigma$ the $\mathfrak{L}_{\rm ring}(F)$-theory of field extensions of $F$,
and $n\geq 1$.
Then every $\exists_n\exists_1$-$\Lring(F)$-sentence $\varphi$ has
${\rm efd}_\Sigma(\varphi)\leq n$.
\end{lemma}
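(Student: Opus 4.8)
The plan is to reduce the $\exists_n\exists_1$ case to the analysis of $\exists_{n+1}$-sentences that is already carried out in \cite[Propositions 4.9, 4.11]{DDF}, by peeling off the outer block of $n$ existential quantifiers and treating the inner $\exists_1$-part as a quantifier-free condition over a suitable auxiliary extension. Concretely, write $\varphi = \exists x_1,\dots,x_m\,\psi(x_1,\dots,x_m)$ with $m\leq n$ and $\psi$ a positive boolean combination of formulas $\eta$ and $\exists y\,\eta$ with $\eta$ quantifier-free; we may assume $m=n$ by padding with dummy variables. Suppose $\varphi$ holds in some $E\models\Sigma$, witnessed by $\bar a=(a_1,\dots,a_n)\in E^n$, so $E\models\psi(\bar a)$. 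First I would put $\psi$ into a normal form: since $\Sigma$ is just the theory of field extensions of $F$ and $\psi$ is an $\exists_1$-formula, $E\models\psi(\bar a)$ is determined by finitely many instances $E\models\exists y\,\eta_k(\bar a, y)$ (the disjuncts/conjuncts that happen to be true at $\bar a$) together with the quantifier-free type of $\bar a$; for each true existential conjunct pick a witness $c_k\in E$.

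The key step is then the choice of the subextension $E_0$. Let $E_0 := F(a_1,\dots,a_n)$, the subfield of $E$ generated by the witnesses of the outer block; this has $\mathrm{trdeg}(E_0/F)\leq n$, which is the bound we want. The claim is that $\varphi$ holds in every $E'\models\Sigma$ into which $E_0$ embeds over $F$. So let $\iota\colon E_0\hookrightarrow E'$ be such an embedding. We must check $E'\models\psi(\iota\bar a)$. The quantifier-free part of $\psi$ at $\iota\bar a$ is preserved because $\iota$ is a field embedding; the issue is the inner existential conjuncts $\exists y\,\eta_k(\iota\bar a,y)$. For these I would invoke exactly the $\exists_{n+1}$-analysis of \cite{DDF}: each conjunct $\exists y\,\eta_k(\bar a,y)$, viewed with $\bar a$ now as parameters, is an existential ($\exists_1$) sentence over the field $E_0$, and since it holds in the extension $E/E_0$, the essential-fiber-dimension bound for $\exists_1$-sentences (the $d=1$ case, or more simply the transfer of existential sentences between field extensions via \cite[Corollary 20.6.4]{FJ} / Lemma~\ref{lem:E1}) shows it is inherited by any field extension $E'/E_0$ into which $E_0$ embeds — here we need $E'$ to be infinite or the witness algebraic, which holds since $E'\models\Sigma$ and the relevant witness lies in the relative algebraic closure of $E_0$ or $E'$ is infinite, exactly as in the proof of Lemma~\ref{lem:E1}. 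Running this for each $k$ simultaneously (they involve the same parameters $\iota\bar a$, so one can combine them or pass to an elementary extension $E'\prec E'^*$ absorbing all needed witnesses, then note $\Sigma$-sentences go down) yields $E'\models\psi(\iota\bar a)$, hence $E'\models\varphi$.

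The main obstacle I anticipate is bookkeeping rather than conceptual: making sure that the several inner existential conjuncts can be satisfied \emph{over the same embedding} $\iota$ without enlarging $E_0$ beyond transcendence degree $n$, and handling the finiteness/infiniteness dichotomy for the witnesses uniformly (the point where Lemma~\ref{lem:E1} splits into the algebraic case and the "$E'$ infinite" case). A clean way to package this is: pass from $E'$ to an $\aleph_1$-saturated elementary extension $E'^*$, extend $\iota$ to an $F$-embedding $E_0(\bar c)\hookrightarrow E'^*$ of the field generated by \emph{all} chosen witnesses $\bar a,\bar c$ — possible because $E_0(\bar c)/E_0$ is finitely generated and $E_0\hookrightarrow E'^*$ with $E'^*$ saturated, using that any finitely generated extension of $E_0$ embeds into a sufficiently saturated extension matching its existential theory, which is guaranteed once $E_0\hookrightarrow E'$ and $E'$ is infinite or the relevant part is algebraic — and then $E'^*\models\psi(\iota\bar a)$ transfers down to $E'\models\varphi$ since $\varphi$ is existential. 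This mirrors exactly the device already used at the end of the proof of Theorem~\ref{thm:En} in the special case \eqref{eqn:star}, so it should go through without new input.
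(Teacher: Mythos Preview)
There is a genuine gap: your choice $E_0 = F(a_1,\dots,a_n)$ is too small. Take $F=\mathbb{Q}$, $n=1$, and $\varphi = \exists x\,(x=0 \wedge \exists y\, y^2=2)$. In $E=\mathbb{Q}(\sqrt{2})$ the outer witness is $a=0$, so your $E_0 = \mathbb{Q}$. But $E':=\mathbb{Q}$ is a field extension of $F$ into which $E_0$ embeds, and $E'\not\models\varphi$. The inner witness $\sqrt{2}$ is algebraic over $E_0$ but not \emph{in} $E_0$, and nothing forces an arbitrary $E'\supseteq E_0$ to contain it. Your appeal to Lemma~\ref{lem:E1} (or \cite[Corollary 20.6.4]{FJ}) does not help: that lemma transfers $\exists_1$-sentences from $K$ to $L$ over $C$ only when the relative algebraic closure $\overline{C}^K$ embeds into $L$ over $C$; here you would need $\overline{E_0}^{E}$ to embed into $E'$, which is exactly what is missing. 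Passing to a saturated $E'^*\succ E'$ cannot repair this, since no elementary extension of $\mathbb{Q}$ contains $\sqrt{2}$.

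The paper instead enlarges $E_0$ so that it already satisfies $\varphi$. When $F(\underline{a})$ is infinite, each transcendental inner witness $b_i$ is replaced by some $c_i\in F(\underline{a})$ via $F(\underline{a})\prec_\exists F(\underline{a})(b_i)$, while algebraic $b_i$ are kept; then $E_0:=F(\underline{a},\underline{c})$ satisfies $E_0\models\varphi$ and still has $\mathrm{trdeg}(E_0/F)=\mathrm{trdeg}(F(\underline{a})/F)\leq n$. A short separate argument handles the case where $F(\underline{a})$ is finite. Once $E_0\models\varphi$, the transfer to every $E'\supseteq E_0$ is automatic since $\varphi$ is existential.
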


\begin{proof}
It suffices to show that if $\varphi$ holds in an extension $E$ of $F$,
then it holds in a subextension $E_0$ of $E/F$ with ${\rm trdeg}(E_0/F)\leq n$.
For this we can assume without loss of generality that $\varphi$ is of the form
$\exists x_1,\dots,x_n \psi$
with $\psi$ of the form $\bigwedge_{i=1}^r\exists y_i \psi_i$ with $\psi_i$ quantifier-free.
Let $a_1,\dots,a_n\in E$ such that
$E\models\psi(\underline{a})$,
i.e.~for every $i$ there exists $b_i\in E$
with $E\models\psi_i(\underline{a},b_i)$.

First suppose that $F(\underline{a})$ is infinite.
In this case, for each $i$, if $b_i$ is transcendental over $F(\underline{a})$, then $F(\underline{a})\prec_\exists F(\underline{a})(b_i)$ \cite[Example 3.1.2]{Ershov}, and therefore there exists $c_i\in F(\underline{a})$ with $F(\underline{a})\models\psi_i(\underline{a},c_i)$.
And if $b_i$ is algebraic over $F(\underline{a})$, let $c_i:=b_i$.
Then $E_0:=F(\underline{a},\underline{c})\models\varphi$
and
${\rm trdeg}(E_0/F)={\rm trdeg}(F(\underline{a})/F)\leq n$,
as claimed.

If on the other hand $F(\underline{a})$ is finite, 
then either ${\rm trdeg}(F(\underline{a},\underline{b})/F)=0$, in which case we can take $E_0=F(\underline{a},\underline{b})$,
or there exists at least one $b_i$ which is transcendental over $F(\underline{a})$, say $b_1$,
and then we argue as above that $\varphi$ holds in an extension $E_0:=F(\underline{a},b_1,c_2,\dots,c_r)$ with $c_2,\dots,c_r$ algebraic over $F(\underline{a},b_1)$,
so that ${\rm trdeg}(E_0/F)\leq 1\leq n$.
\end{proof}

\begin{lemma}\label{lem:efd_sep}
Let $F$ be a field and
	$T$ some $\Lring(F)$-theory of separable field extensions of $F$.
Then every $\Lring(F)$-sentence $\varphi$ with ${\rm efd}_T(\varphi)\leq n$ is equivalent modulo $T$ to an $\exists_{n+1}$-$\Lring(F)$-sentence.
\end{lemma}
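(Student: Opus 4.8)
The plan is to obtain the desired sentence as a finite disjunction of sentences of the form $\exists\bar x\,\delta(\bar x)$, where $\bar x=(x_1,\dots,x_m)$ with $m\leq n+1$ and $\delta$ is quantifier-free, and to extract this finite disjunction by compactness. Note first that any such finite disjunction is, after introducing dummy variables, logically equivalent to a single $\exists_{n+1}$-$\Lring(F)$-sentence, since $\exists$ distributes over $\vee$; so it is enough to produce finitely many $\exists_{n+1}$-$\Lring(F)$-sentences whose disjunction is $T$-equivalent to $\varphi$. Let $\Psi$ be the set of all $\exists_{n+1}$-$\Lring(F)$-sentences $\psi$ with $T\models\psi\to\varphi$. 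I would reduce to showing that every model of $T\cup\{\varphi\}$ satisfies some member of $\Psi$: granting this, $T\cup\{\varphi\}\cup\{\neg\psi:\psi\in\Psi\}$ is inconsistent, so by compactness $T\cup\{\varphi\}\models\psi_1\vee\dots\vee\psi_k$ for some $\psi_1,\dots,\psi_k\in\Psi$, and since also $T\models(\psi_1\vee\dots\vee\psi_k)\to\varphi$, the disjunction $\psi_1\vee\dots\vee\psi_k$ is $T$-equivalent to $\varphi$.

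So fix $E'\models T$ with $E'\models\varphi$. Since ${\rm efd}_T(\varphi)\leq n$ and the witnessing subextension may be taken finitely generated (\cite[Lemma 4.3]{DDF}), there is a finitely generated subextension $E_0$ of $E'/F$ with ${\rm trdeg}(E_0/F)\leq n$ such that $\varphi$ holds in every $E''\models T$ into which $E_0$ embeds over $F$. As $E'/F$ is separable, so is the subextension $E_0/F$; being moreover finitely generated, $E_0$ is generated over $F$ by a tuple $\bar a=(a_1,\dots,a_m)$ with $m\leq{\rm trdeg}(E_0/F)+1\leq n+1$, by Lemma~\ref{lem:prim_elem}. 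Now the crucial claim is that there is a quantifier-free $\Lring(F)$-formula $\delta(\bar x)$ in the variables $x_1,\dots,x_m$ with $\delta\in{\rm tp}^{\rm qf}(\bar a/F)$ and $T\models\exists\bar x\,\delta(\bar x)\to\varphi$. Given the claim, $\psi:=\exists\bar x\,\delta(\bar x)$ lies in $\Psi$ and holds in $E'$ (witnessed by $\bar a$), completing the reduction.

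To prove the claim I would argue by contradiction and compactness. If no such $\delta$ exists, then $T\cup\{\neg\varphi,\exists\bar x\,\delta(\bar x)\}$ is consistent for every $\delta\in{\rm tp}^{\rm qf}(\bar a/F)$; since ${\rm tp}^{\rm qf}(\bar a/F)$ is closed under conjunction, a standard compactness argument (adding new constants $\bar c$) produces a model $E''\models T\cup\{\neg\varphi\}$ and a tuple $\bar c\in E''$ realizing ${\rm tp}^{\rm qf}(\bar a/F)$. Then $\bar a\mapsto\bar c$ induces an isomorphism $F[\bar a]\cong F[\bar c]$ over $F$, and since $E''$ is a field this extends to an embedding $E_0={\rm Quot}(F[\bar a])\hookrightarrow E''$ over $F$; hence $E''\models\varphi$ by the defining property of $E_0$ — contradicting $E''\models\neg\varphi$.

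I expect the heart of the matter to be precisely this last point, and it is worth stressing why the compactness detour is unavoidable: one cannot in general express by a single existential $\Lring(F)$-formula the property that $E_0$ embeds over $F$ into $E''$, because the full quantifier-free type of $\bar a$ is needed, e.g.\ to force a transcendence basis of $E_0/F$ to remain algebraically independent. Instead, finiteness has to be extracted one level up, at $\varphi$ itself, and this is exactly what the hypothesis ${\rm efd}_T(\varphi)\leq n$ delivers. Separability of the extensions in $T$ enters only once, to bound the number of generators of $E_0/F$ by $n+1$ via Lemma~\ref{lem:prim_elem}; without it one would only get a weaker bound coming from a general finite generating set.
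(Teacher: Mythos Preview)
Your proof is correct and follows essentially the same route as the paper's. The paper's argument is identical in outline---pick $E_0$ via ${\rm efd}_T(\varphi)\leq n$, use separability and Lemma~\ref{lem:prim_elem} to bound the number of generators by $n+1$, then conclude---but it delegates the last step (your two compactness arguments) to a citation of \cite[Proposition~2.8, Remark~3.1]{DDF}; you have simply unfolded that black box.
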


\begin{proof}
Suppose that $\varphi$ holds in a separable extension $E$ of $F$.
The assumption ${\rm efd}_T(\varphi)\leq n$ implies that there is a finitely generated subextension $E_0$ of $E/F$ with ${\rm trdeg}(E_0/F)\leq n$ such that $\varphi$ holds in every $E'\models T$ into which $E_0$ embeds.
As $E/F$ is separable, so is $E_0/F$, so since it is also finitely generated, 
it is generated by $n+1$ elements (Lemma \ref{lem:prim_elem}).
So since $\varphi$ holds in every $E'\models T$ into which $E_0$ embeds,
this shows that $\varphi$ is equivalent modulo $T$ to an
$\exists_{n+1}$-sentence,
cf.~\cite[Proposition 2.8, Remark 3.1]{DDF}.
\end{proof}

\begin{proposition}\label{prop:efd}
Let $K,L$ be two separable extensions of a field $C$.
Then ${\rm Th}_{\exists_n\exists_1}(K,C)\subseteq{\rm Th}_{\exists_n\exists_1}(L,C)$ if and only if 
${\rm Th}_{\exists_{n+1}}(K,C)\subseteq{\rm Th}_{\exists_{n+1}}(L,C)$.
\end{proposition}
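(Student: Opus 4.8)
The plan is to prove the two implications separately, using the machinery of essential fiber dimension. For the forward direction, suppose $\mathrm{Th}_{\exists_n\exists_1}(K,C)\subseteq\mathrm{Th}_{\exists_n\exists_1}(L,C)$ and let $\varphi\in\mathrm{Th}_{\exists_{n+1}}(K,C)$; since every $\exists_{n+1}$-sentence is an $\exists_n\exists_1$-sentence, $\varphi\in\mathrm{Th}_{\exists_n\exists_1}(K,C)\subseteq\mathrm{Th}_{\exists_n\exists_1}(L,C)$, and positive boolean combinations of $\exists_{n+1}$-sentences are positive boolean combinations of $\exists_n\exists_1$-sentences, so we are done. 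Actually, a little care is needed: $\mathrm{Th}_{\exists_{n+1}}(K,C)$ consists of positive boolean combinations of $\exists_{n+1}$-sentences that hold in $K$, each of which is in particular such a combination of $\exists_n\exists_1$-sentences holding in $K$, hence lies in $\mathrm{Th}_{\exists_n\exists_1}(K,C)$; this gives $\mathrm{Th}_{\exists_{n+1}}(K,C)\subseteq\mathrm{Th}_{\exists_n\exists_1}(K,C)\subseteq\mathrm{Th}_{\exists_n\exists_1}(L,C)$, and every element of the right-hand side that happens to be a positive boolean combination of $\exists_{n+1}$-sentences lies in $\mathrm{Th}_{\exists_{n+1}}(L,C)$. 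So the forward implication is immediate from the syntactic inclusion of fragments.

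For the converse, suppose $\mathrm{Th}_{\exists_{n+1}}(K,C)\subseteq\mathrm{Th}_{\exists_{n+1}}(L,C)$ and let $\varphi\in\mathrm{Th}_{\exists_n\exists_1}(K,C)$. It suffices to treat a single $\exists_n\exists_1$-$\Lring(C)$-sentence $\varphi$ that holds in $K$ and show it holds in $L$ (the general case follows by taking positive boolean combinations, and we should translate from $\Lfield$ to $\Lring$ freely via Lemma~\ref{lem:field_to_ring}, or simply work in $\Lring$ throughout). Let $T$ be the $\Lring(C)$-theory of separable field extensions of $C$; both $K$ and $L$ are models of $T$. By Lemma~\ref{lem:efd} applied with $F=C$ and $\Sigma$ the theory of all field extensions of $C$, we have $\mathrm{efd}_\Sigma(\varphi)\le n$; since $T\supseteq\Sigma$-consequences restricted to separable extensions — more precisely, since the witnessing subextension $E_0$ produced in Lemma~\ref{lem:efd} is a subextension of a separable extension and hence itself separable, and since $\varphi$ holding in every field extension into which $E_0$ embeds in particular means $\varphi$ holds in every model of $T$ into which $E_0$ embeds — we get $\mathrm{efd}_T(\varphi)\le n$. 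Now apply Lemma~\ref{lem:efd_sep}: since $T$ is a theory of separable extensions of $C$ and $\mathrm{efd}_T(\varphi)\le n$, the sentence $\varphi$ is equivalent modulo $T$ to an $\exists_{n+1}$-$\Lring(C)$-sentence $\varphi'$. Then $\varphi'\in\mathrm{Th}_{\exists_{n+1}}(K,C)$ because $K\models T$ and $K\models\varphi$; by hypothesis $\varphi'\in\mathrm{Th}_{\exists_{n+1}}(L,C)$, so $L\models\varphi'$, and since $L\models T$ we conclude $L\models\varphi$, as desired.

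The only slightly delicate point — and the part I would write out most carefully — is the passage from $\mathrm{efd}_\Sigma(\varphi)\le n$ to $\mathrm{efd}_T(\varphi)\le n$, i.e.\ checking that the definition of essential fiber dimension behaves well under passing from the theory of all extensions to the theory of separable extensions. This is really just unwinding \cite[Definition 4.1]{DDF}: if $\varphi$ holds in some separable $E\models T$, then in particular it holds in $E$ viewed as an arbitrary extension, Lemma~\ref{lem:efd} gives a finitely generated subextension $E_0$ of $E/C$ with $\mathrm{trdeg}(E_0/C)\le n$ such that $\varphi$ holds in every field extension of $C$ into which $E_0$ embeds over $C$ — and this class of extensions includes every model of $T$ into which $E_0$ embeds over $C$. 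That is exactly the condition $\mathrm{efd}_T(\varphi)\le n$. Everything else is bookkeeping: reducing to single sentences, moving between $\Lfield$ and $\Lring$ via Lemma~\ref{lem:field_to_ring}, and the trivial fragment-inclusion argument for the forward direction. I expect no real obstacle beyond stating these reductions cleanly.
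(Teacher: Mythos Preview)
Your proposal is correct and follows essentially the same route as the paper: the forward implication is the trivial syntactic inclusion, and for the converse you use Lemma~\ref{lem:efd} to bound ${\rm efd}_\Sigma(\varphi)\le n$, pass to ${\rm efd}_T(\varphi)\le n$ for $T$ the theory of separable extensions (the paper cites \cite[Lemma~4.4]{DDF} for this step, which you unwind by hand), and then apply Lemma~\ref{lem:efd_sep} to replace $\varphi$ by an equivalent $\exists_{n+1}$-sentence. The mention of Lemma~\ref{lem:field_to_ring} is unnecessary here---everything is already in $\Lring(C)$---but harmless.
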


\begin{proof}
The implication from left to right is trivial.
Conversely,
assume that ${\rm Th}_{\exists_{n+1}}(K,C)\subseteq{\rm Th}_{\exists_{n+1}}(L,C)$
and let $\varphi\in{\rm Th}_{\exists_n\exists_1}(K,C)$.
Without loss of generality, $\varphi$ is an $\exists_n\exists_1$-sentence.
By Lemma~\ref{lem:efd},
${\rm efd}_\Sigma(\varphi)\leq n$,
where
	$\Sigma$ is the $\Lring(C)$-theory of all field extensions of $C$.
In particular, if
	$T$ denotes the $\Lring(C)$-theory of separable extensions of $C$,
${\rm efd}_T(\varphi)\leq n$
(cf.~\cite[Lemma 4.4]{DDF}),
so Lemma~\ref{lem:efd_sep} implies that $\varphi$ is equivalent modulo $T$ to an $\exists_{n+1}$-$\Lring(C)$-sentence $\psi$.
So since $K/C$ is separable, $K\models\psi$, therefore $L\models\psi$, and thus, since also $L/C$ is separable, $L\models\varphi$.
\end{proof}

\begin{remark}
Since all extensions $E/F$ occurring in the formulation of \Rfour\ are necessarily separable,
this immediately implies that if \Rfour\ holds for the $\exists_{n+1}$-fragment, i.e.~\RfourNplusone\ holds,
then \Rfour\ also holds for the $\exists_n\exists_1$-fragment.
However, for clarity of presentation, we state all our results under the same assumption \RfourN\ respectively \RfourNplusone.
\end{remark}

\begin{theorem}\label{thm:ENE1}
Let $(C,u)$ be an equicharacteristic $\ZZ$-valued field with uniformizer $\pi$ such that $\mathcal{O}_u$ is excellent. 
Let $(K,v)$ and $(L,w)$
be henselian extensions of $(C,u)$ with uniformizer $\pi$ such that $Kv/Cu$ and $Lw/Cu$ are separable.
 If ${\rm Th}_{\exists_n\exists_1}(Kv,Cu)\subseteq{\rm Th}_{\exists_n\exists_1}(Lw,Cu)$
 and \RfourNplusone\ holds,
then ${\rm Th}_{\exists_n\exists_1}(K,C)\subseteq{\rm Th}_{\exists_n\exists_1}(L,C)$.
\end{theorem}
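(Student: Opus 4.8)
The plan is to run the proof of Theorem~\ref{thm:En}(1a) with $\exists_n$ replaced throughout by $\exists_n\exists_1$, making two adjustments.

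The first adjustment is to the opening reduction. For $\exists_n$ one reduces to substructures of $(K,C)$ generated by at most $n$ elements, but this fails for $\exists_n\exists_1$, since the witnesses of the inner block need not lie in the substructure spanned by the witnesses of the outer block. Instead I would argue as follows. It suffices to show that every $\exists_n\exists_1$-$\Lring(C)$-sentence $\varphi$ with $(K,C)\models\varphi$ also holds in $(L,C)$. By Lemma~\ref{lem:efd}, applied with $F=C$ and $\Sigma$ the $\Lring(C)$-theory of all field extensions of $C$, one has ${\rm efd}_\Sigma(\varphi)\le n$, so $\varphi$ already holds in some finitely generated subextension $E_0$ of $K/C$ with ${\rm trdeg}(E_0/C)\le n$. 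This $E_0$ takes over the role played by $E={\rm Quot}(C[a_1,\dots,a_n])$ in the proof of Theorem~\ref{thm:En}: equipping $E_0$ with $v|_{E_0}$ and forming the henselization $(E_0^h,v^h)$ inside $(K,v)$ yields a henselian extension of $(C,u)$ with uniformizer $\pi$, with $E_0^h v^h=E_0v$ separable over $Cu$ and with ${\rm trdeg}(E_0v/Cu)\le n$. Exactly as in that proof --- using the hypothesis ${\rm Th}_{\exists_n}(Kv,Cu)\subseteq{\rm Th}_{\exists_n}(Lw,Cu)$, which is implied by the given inclusion for the larger fragment $\exists_n\exists_1$, together with the Abhyankar inequality and the ruled residue theorem, and dealing with the special case \eqref{eqn:star} via Lemma~\ref{lem:E1} --- one obtains ${\rm Th}_\exists(E_0v,Cu)\subseteq{\rm Th}_\exists(Lw,Cu)$.

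The second adjustment is that the monotonicity steps Proposition~\ref{prop:same_E4}, Lemma~\ref{lem:reduce_to_rank_one} and Proposition~\ref{prop:main} are used in their $\exists_n\exists_1$-versions. These hold for the same reasons as the $\exists_n$-versions: the Remark following Proposition~\ref{prop:efd} shows that \RfourNplusone\ implies that \Rfour\ holds for the $\exists_n\exists_1$-fragment, which is all that the proofs of Proposition~\ref{prop:same_E4} and Lemma~\ref{lem:reduce_to_rank_one} require; and the rank-one input Lemma~\ref{prop:rank_one} already transfers the \emph{full} existential theory, hence a fortiori the $\exists_n\exists_1$-theory, so the proof of Proposition~\ref{prop:main} goes through and, applied to $(E_0^h,v^h)$ and $(L,w)$, gives ${\rm Th}_{\exists_n\exists_1}(E_0^h,C)\subseteq{\rm Th}_{\exists_n\exists_1}(L,C)$. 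Since $E_0^h\models\varphi$ (as $E_0\models\varphi$ and $\varphi$ is existential), this yields $(L,C)\models\varphi$, completing the argument. The step I expect to need the most care is this first adjustment --- verifying that the finitely generated $E_0$ supplied by Lemma~\ref{lem:efd} really can be used in place of the quotient field of an $n$-generated substructure, and that $(E_0^h,v^h)$ meets all the running hypotheses --- after which the remainder is a routine transcription of the proof of Theorem~\ref{thm:En}(1a).
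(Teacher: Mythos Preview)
Your approach is genuinely different from the paper's, which is short and modular: from the hypothesis on residue fields, Proposition~\ref{prop:efd} (using separability of $Kv/Cu$ and $Lw/Cu$) gives ${\rm Th}_{\exists_{n+1}}(Kv,Cu)\subseteq{\rm Th}_{\exists_{n+1}}(Lw,Cu)$; Theorem~\ref{thm:En}(1a) with $n$ replaced by $n+1$ then yields ${\rm Th}_{\exists_{n+1}}(K,C)\subseteq{\rm Th}_{\exists_{n+1}}(L,C)$; and finally, since excellence of $\mathcal{O}_u$ together with separability of the residue extensions forces $K/C$ and $L/C$ to be separable, Proposition~\ref{prop:efd} applies once more at the valued-field level to return to $\exists_n\exists_1$. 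No unwinding of the proof of Theorem~\ref{thm:En} is needed.

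Your route has a genuine gap in the first adjustment. In the proof of Theorem~\ref{thm:En}, the ruled residue step in the case ${\rm trdeg}(Ev/Cu)=n$ uses that $E=C(a_1,\dots,a_n)$ is literally a simple transcendental extension of $C(a_1,\dots,a_{n-1})$; this is what allows one to write $Ev=E'(t)$ with ${\rm trdeg}(E'/Cu)<n$ and then invoke $E'\prec_\exists Ev$. Your $E_0$, supplied by Lemma~\ref{lem:efd}, is only known to be a finitely generated extension of $C$ of transcendence degree at most $n$ --- typically a finite extension of $C(a_1,\dots,a_n)$ --- and there is no reason for $E_0v$ to be ruled over a subfield of transcendence degree $<n$. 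For instance, $E_0v$ could be the function field of a positive-genus curve over a subfield of transcendence degree $n-1$, in which case no $E'$ with ${\rm Th}_\exists(E',Cu)={\rm Th}_\exists(E_0v,Cu)$ and ${\rm trdeg}(E'/Cu)<n$ exists, and the $\exists_n$-hypothesis on residue fields alone does not suffice to conclude ${\rm Th}_\exists(E_0v,Cu)\subseteq{\rm Th}_\exists(Lw,Cu)$.

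The repair is precisely the paper's key observation: upgrade the residue hypothesis to ${\rm Th}_{\exists_{n+1}}(Kv,Cu)\subseteq{\rm Th}_{\exists_{n+1}}(Lw,Cu)$ via Proposition~\ref{prop:efd}. Then every finitely generated subextension $F$ of $E_0v/Cu$ has transcendence degree $\le n$ and, being separable, is generated by at most $n+1$ elements (Lemma~\ref{lem:prim_elem}); so the $\exists_{n+1}$-hypothesis gives ${\rm Th}_\exists(F,Cu)\subseteq{\rm Th}_\exists(Lw,Cu)$ without any appeal to the ruled residue theorem. With this fix your argument goes through, but at that point it has absorbed the essential content of the paper's proof, and the latter reaches the conclusion more directly by treating Theorem~\ref{thm:En} as a black box.
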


\begin{proof}
The assumption ${\rm Th}_{\exists_n\exists_1}(Kv,Cu)\subseteq{\rm Th}_{\exists_n\exists_1}(Lw,Cu)$  in particular implies (in fact, is equivalent by Proposition~\ref{prop:efd}) that
${\rm Th}_{\exists_{n+1}}(Kv,Cu)\subseteq{\rm Th}_{\exists_{n+1}}(Lw,Cu)$, 
as $Kv/Cu$ and $Lw/Cu$ are separable.
Therefore, Theorem~\ref{thm:En}(1a) (assuming \RfourNplusone) shows that
${\rm Th}_{\exists_{n+1}}(K,C)\subseteq{\rm Th}_{\exists_{n+1}}(L,C)$.
But the assumptions that $\mathcal{O}_u$ is excellent and $Kv/Cu$ respectively $Lw/Cu$ are separable imply that also $K/C$ and $L/C$ are separable (see \cite[proof of Lemma 3.6/2]{BLR} and also the footnote on p.~2027 in \cite{ADF23}),
so Proposition \ref{prop:efd} implies that
${\rm Th}_{\exists_n\exists_1}(K,C)\subseteq{\rm Th}_{\exists_n\exists_1}(L,C)$.
\end{proof}

\begin{remark}
We point out that we do not prove a statement analogous to Theorem~\ref{thm:En}(1b),
i.e.~that if ${\rm Th}_{\exists_{n-1}\exists_1}(Kv,Cu)\subseteq{\rm Th}_{\exists_{n-1}\exists_1}(Lw,Cu)$
 and \RfourNplusone\ holds,
then ${\rm Th}_{\exists_{n-1}\exists_1}(K,v,C)\subseteq{\rm Th}_{\exists_{n-1}\exists_1}(L,w,C)$.
Namely, from Proposition~\ref{prop:val_to_ring} one only gets that every
$\exists_{n-1}\exists_1$-$\mathfrak{L}_{\rm val}(C)$-sentence is equivalent modulo $\Hpi$
to a $\exists_{n-1}\exists_2$-$\mathfrak{L}_{\rm ring}(C)$-sentence (in the notation of \cite{AFAE}),
but it is unclear whether it is also equivalent to a
$\exists_{n}\exists_1$-$\mathfrak{L}_{\rm val}(C)$-sentence.
\end{remark}

\section{Monotonicity for the $\exists^n$-fragment}\label{sec:Euppern}

\noindent
Finally, we consider the fragment which in the notation introduced in \cite[Definition 2.3]{AFAE} is denoted $\exists^n$: 
For a language $\mathfrak{L}$, an $\exists^n$-$\mathfrak{L}$-formula 
is an $\mathfrak{L}$-formula of the form $\psi$ or $\exists x\psi$ with
$\psi$ in the $\exists^{n-1}$-fragment, i.e.~a positive boolean combination of $\exists^{n-1}$-$\mathfrak{L}$-formulas, defined inductively, with $\exists^0$ being the quantifier-free formulas. 
Note that the $\exists^1$-formulas coincide with the $\exists_1$-formulas, 
the $\exists^2$-formulas coincide with the $\exists_1\exists_1$-formulas from the previous section, and for $n>1$ the set of $\exists_n\exists_1$-formulas is properly contained in the set of $\exists^{n+1}$-formulas.
Again, we denote by ${\rm Th}_{\exists^n}(K)$ the set of positive boolean combinations of $\exists^n$-$\mathfrak{L}$-formulas that hold in $K$.

\begin{proposition}\label{prop:E^n}
Let $\mathfrak{L}$ be a language and $M,N$ $\mathfrak{L}$-structures.
Then ${\rm Th}_{\exists^{n+1}}(M)\subseteq{\rm Th}_{\exists^{n+1}}(N)$ if and only if 
for every $a\in M$ there exists $N\prec N^*$ and $b\in N$ such that
${\rm Th}_{\exists^n}(M,a)\subseteq{\rm Th}_{\exists^n}(N^*,b)$,
where $(M,a)$, $(N,b)$ are $\mathfrak{L}(c)$-structures with $c$ a new constant symbol.
\end{proposition}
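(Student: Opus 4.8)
The plan is to run the standard manoeuvre that trades the leading existential quantifier for a new constant symbol: compactness for ``$\Rightarrow$'', and a one-quantifier lifting argument for ``$\Leftarrow$''. Two preliminary remarks would streamline this. First, for any fragment $\mathfrak{F}$ closed under positive boolean combinations, ${\rm Th}_{\mathfrak{F}}(A)\subseteq{\rm Th}_{\mathfrak{F}}(B)$ holds if and only if every $\mathfrak{F}$-sentence true in $A$ is true in $B$; the nontrivial implication holds because a positive boolean combination is monotone in the truth values of its constituents. Second, ${\rm Th}_{\exists^n}(M,a)\subseteq{\rm Th}_{\exists^n}(N^*,b)$ says exactly that $(N^*,b)$, viewed as an $\mathfrak{L}(c)$-structure with $c$ interpreted by $b$, is a model of ${\rm Th}_{\exists^n}(M,a)$. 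So it suffices to transfer individual $\exists^{n+1}$-$\mathfrak{L}$-sentences from $M$ to $N$ on the left, and to build one $\mathfrak{L}(c)$-structure satisfying ${\rm Th}_{\exists^n}(M,a)$ on the right.

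For ``$\Leftarrow$'': let $\sigma$ be an $\exists^{n+1}$-$\mathfrak{L}$-sentence with $M\models\sigma$. If $\sigma$ already lies in the $\exists^n$-fragment then, for any $a\in M$, $\sigma\in{\rm Th}_{\exists^n}(M,a)$, so the hypothesis yields $N\prec N^*$ with $N^*\models\sigma$, hence $N\models\sigma$ by elementarity. If instead $\sigma=\exists x\,\psi(x)$ with $\psi$ in the $\exists^n$-fragment, choose a witness $a\in M$ with $M\models\psi(a)$; then $\psi(c)\in{\rm Th}_{\exists^n}(M,a)$, so the hypothesis gives $N\prec N^*$ and $b$ with $(N^*,b)\models\psi(c)$, i.e.\ $N^*\models\psi(b)$, whence $N^*\models\sigma$ and again $N\models\sigma$. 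Passing to positive boolean combinations by the monotonicity remark gives ${\rm Th}_{\exists^{n+1}}(M)\subseteq{\rm Th}_{\exists^{n+1}}(N)$.

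For ``$\Rightarrow$'': fix $a\in M$, put $\Sigma:={\rm Th}_{\exists^n}(M,a)$ read in $\mathfrak{L}(c)$, and consider the $\mathfrak{L}(N)\cup\{c\}$-theory consisting of the elementary diagram of $N$ together with $\Sigma$. A model of it has $\mathfrak{L}$-reduct an elementary extension $N\prec N^*$, its interpretation of $c$ is the desired $b$, and $(N^*,b)\models\Sigma$ delivers the inclusion. By compactness it is enough to satisfy a finite fragment: some $\mathfrak{L}(N)$-sentence true in $N$ (a conjunction of finitely many sentences of the elementary diagram) and finitely many $\theta_1,\dots,\theta_k\in\Sigma$. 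Replacing $c$ by a fresh variable $x$ and setting $\theta(x):=\bigwedge_{i=1}^k\theta_i(x)$, the formula $\theta$ is again in the $\exists^n$-fragment, so $\exists x\,\theta(x)$ is an $\exists^{n+1}$-$\mathfrak{L}$-sentence; since $(M,a)\models\theta_i$ for all $i$ we get $M\models\exists x\,\theta(x)$, and the hypothesis forces $N\models\exists x\,\theta(x)$. Any witness $b\in N$ for this satisfies every $\theta_i$ while also validating the chosen piece of the elementary diagram, so the finite fragment is satisfied in $N$ itself, and compactness finishes the argument.

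I do not anticipate a genuine obstacle: the content is the familiar constant/compactness trick. The only point needing care is bookkeeping with the inductive definition of the $\exists^m$-fragments --- in particular checking that a finite conjunction of $\exists^n$-$\mathfrak{L}(c)$-sentences, with $c$ replaced by a bound variable and a single existential quantifier prefixed, genuinely lands among the $\exists^{n+1}$-$\mathfrak{L}$-sentences, which is precisely where the ``positive boolean combination'' clause of the definition is used --- together with keeping the equivalence between inclusion of the ${\rm Th}_{\exists^m}$'s and transfer of individual fragment sentences straight throughout.
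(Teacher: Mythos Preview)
Your proposal is correct and follows essentially the same approach as the paper: compactness against the elementary diagram of $N$ for the forward direction, and transferring a witness $a\mapsto b$ for the backward direction. You are somewhat more explicit than the paper about the monotonicity of positive boolean combinations and about why the conjunction $\theta(x)$ stays in the $\exists^n$-fragment, but the argument is the same.
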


\begin{proof}
First suppose that ${\rm Th}_{\exists^{n+1}}(M)\subseteq{\rm Th}_{\exists^{n+1}}(N)$
and let $a\in M$.
Every $\varphi\in{\rm Th}_{\exists^n}(M,a)$ is of the form $\psi(a)$ with $\psi(x)$ a positive boolean combination of $\exists^n$-$\mathfrak{L}$-formulas, and $\exists x\psi(x)$ is an $\exists^{n+1}$-$\mathfrak{L}$-sentence that holds in $M$, so by assumption also in $N$.
Thus, as ${\rm Th}_{\exists^n}(M,a)$ is by definition closed under conjunctions,
by the compactness theorem there exists $N\prec N^*$ and $b\in N^*$ for which all of these $\psi$ hold, hence ${\rm Th}_{\exists^n}(M,a)\subseteq{\rm Th}_{\exists^n}(N^*,b)$.

For the other implication, let $\varphi\in{\rm Th}_{\exists^{n+1}}(M)$.
Without loss of generality, $\varphi$ is of the form $\exists x \psi(x)$ with $\psi$ a positive boolean combination of $\exists^n$-formulas, and there exists $a\in M$ with $M\models\psi(a)$.
The assumption is that there exists $N\prec N^*$ and $b\in N$ such that ${\rm Th}_{\exists^n}(M,a)\subseteq{\rm Th}_{\exists^n}(N^*,b)$, and therefore $N^*\models\psi(b)$,
in particular $N^*\models\exists x\psi(x)$ and therefore also $N\models\varphi$.
\end{proof}

\begin{corollary}\label{cor:E^n}
Let $(K,v),(L,w)$ be extensions of a valued field $(C,u)$. 
Then
${\rm Th}_{\exists^{n+1}}(K,v,C)\subseteq{\rm Th}_{\exists^{n+1}}(L,w,C)$ if and only if
for every $a\in K$ there exists an elementary extension $(L,w)\prec (L^*,w^*)$ and 
an embedding $\iota\colon (C(a),v)\rightarrow(L^*,w^*)$ over $C$
such that
$$
 {\rm Th}_{\exists^{n}}(K,v,C(a))\subseteq{\rm Th}_{\exists^n}(L^*,w^*,\iota(C(a))).
$$ 
\end{corollary}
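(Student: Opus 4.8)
The plan is to deduce Corollary~\ref{cor:E^n} from Proposition~\ref{prop:E^n} by taking $\mathfrak{L}=\Lval(C)$, $M=(K,v,C)$, $N=(L,w,C)$, and then translating the abstract condition ``there exist $N\prec N^*$ and $b\in N$ with ${\rm Th}_{\exists^n}(M,a)\subseteq{\rm Th}_{\exists^n}(N^*,b)$'' into the concrete valued-field statement involving an embedding of $(C(a),v)$. So first I would invoke Proposition~\ref{prop:E^n} verbatim: ${\rm Th}_{\exists^{n+1}}(K,v,C)\subseteq{\rm Th}_{\exists^{n+1}}(L,w,C)$ holds if and only if for every $a\in K$ there is an elementary extension $(L,w,C)\prec(L^*,w^*,C)$ and an element $b\in L^*$ with ${\rm Th}_{\exists^n}(K,v,C,a)\subseteq{\rm Th}_{\exists^n}(L^*,w^*,C,b)$. (Here I should note that an elementary extension of the $\mathfrak{L}(c)$-reduct $(N,b)$ may be taken of the form $(N^*,b)$ with $N\prec N^*$, since $b$ is named by a constant; this is the standard fact that elementary extensions of $\mathfrak{L}(c)$-structures are precisely the $\mathfrak{L}(c)$-reducts of elementary extensions of the $\mathfrak{L}$-structure.)

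Next I would carry out the translation. Given $a\in K$, set $b$ as above. The key observation is that for an $\mathfrak{L}(c)$-structure $(L^*,w^*,C,b)$, the fragment ${\rm Th}_{\exists_1}$ (equivalently ${\rm Th}_{\exists^1}$) restricted to sentences ``$p(c)=0$'' for $p\in C[x]$ pins down the isomorphism type of $C(b)$ over $C$ whenever $a$ (hence $b$) is the relevant generator; more precisely, since the quantifier-free $\mathfrak{L}_{\rm val}(C)$-diagram of $a$ is contained in ${\rm Th}_{\exists^n}(K,v,C,a)$ for every $n\geq 1$, the containment ${\rm Th}_{\exists^n}(K,v,C,a)\subseteq{\rm Th}_{\exists^n}(L^*,w^*,C,b)$ forces $b$ to satisfy the same quantifier-free $\mathfrak{L}_{\rm val}(C)$-formulas as $a$. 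This is exactly the statement that $a\mapsto b$ extends to an $\mathfrak{L}_{\rm val}$-embedding $\iota\colon(C(a),v)\to(L^*,w^*)$ over $C$ (using that the valuation ring, the field operations, and all $C$-polynomial relations are captured quantifier-free, and that $C(a)$ is generated as a field over $C$ by $a$). Conversely, given such an $\iota$ with ${\rm Th}_{\exists^n}(K,v,C(a))\subseteq{\rm Th}_{\exists^n}(L^*,w^*,\iota(C(a)))$, setting $b:=\iota(a)$ gives $b\in L^*$ with ${\rm Th}_{\exists^n}(K,v,C,a)\subseteq{\rm Th}_{\exists^n}(L^*,w^*,C,b)$, since every $\exists^n$-$\mathfrak{L}_{\rm val}(C,c)$-sentence (interpreting $c$ as $a$) can be viewed as an $\exists^n$-$\mathfrak{L}_{\rm val}(C(a))$-sentence and vice versa once $\iota$ identifies $C(a)$ with $\iota(C(a))$.

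Then I would assemble the two directions. For the forward direction: from Proposition~\ref{prop:E^n} get $(L^*,w^*)$ and $b$; produce $\iota$ by the quantifier-free diagram argument above; observe that ${\rm Th}_{\exists^n}(K,v,C,a)$ and ${\rm Th}_{\exists^n}(K,v,C(a))$ are literally the same set of sentences under the identification of the constant $c$ with $a$, and likewise on the $L^*$ side under $\iota$, so the displayed containment follows. For the backward direction: given the data $(L^*,w^*)$, $\iota$, and the displayed containment for every $a\in K$, read it back as the condition of Proposition~\ref{prop:E^n} with $b=\iota(a)$, and conclude ${\rm Th}_{\exists^{n+1}}(K,v,C)\subseteq{\rm Th}_{\exists^{n+1}}(L,w,C)$.

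The main obstacle — really the only non-bookkeeping point — is justifying cleanly that the $\exists^n$-containment at the level of the single constant $c$ is equivalent to the $\exists^n$-containment at the level of the subfield $C(a)$, i.e.\ that adding constants for \emph{all} of $C(a)$ rather than just for $a$ does not change anything. This is because every element of $C(a)$ is a ratio of $C$-polynomials in $a$, so each such constant is $\exists_1$-$\mathfrak{L}_{\rm ring}(C,c)$-definable (indeed quantifier-free $\mathfrak{L}_{\rm field}(C,c)$-definable, hence quantifier-free $\mathfrak{L}_{\rm ring}(C,c)$-definable by Lemma~\ref{lem:field_to_ring}) in a way that transfers along $\iota$; substituting these definitions into an $\exists^n$-$\mathfrak{L}_{\rm val}(C(a))$-sentence yields an $\exists^n$-$\mathfrak{L}_{\rm val}(C,c)$-sentence with the same truth value on both sides. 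I would spell this substitution out in one or two sentences and otherwise treat the rest as routine.
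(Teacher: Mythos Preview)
Your plan—apply Proposition~\ref{prop:E^n} with $\mathfrak{L}=\mathfrak{L}_{\rm val}(C)$ and then translate the abstract condition into the valued-field embedding statement—is exactly the paper's approach; the paper compresses the translation into the single remark that ${\rm Th}_{\exists^0}(K,v,C,a)$ already determines the isomorphism type of $C(a)$. Your treatment of the backward direction (from the embedding condition to the $\exists^{n+1}$-inclusion, via $b:=\iota(a)$) is fine.

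There is, however, a genuine gap in your last paragraph, concerning the forward direction. You claim that substituting the quantifier-free $\mathfrak{L}_{\rm ring}(C,c)$-definitions of the constants $d\in C(a)$ into an $\exists^n$-$\mathfrak{L}_{\rm val}(C(a))$-sentence yields an $\exists^n$-$\mathfrak{L}_{\rm val}(C,c)$-sentence, invoking Lemma~\ref{lem:field_to_ring}. But Lemma~\ref{lem:field_to_ring} only eliminates $\cdot^{-1}$ from \emph{ring}-atomic formulas $t_1=t_2$; it does not extend to the valuation predicate. After replacing $d=p(a)/q(a)$ by the $\mathfrak{L}_{\rm field}$-term $p(c)q(c)^{-1}$, an atomic subformula $f(d)\in\mathcal{O}$ becomes a condition of the shape $P(c)/Q(c)\in\mathcal{O}$, i.e.\ $v(P(c))\ge v(Q(c))$, and this is \emph{not} quantifier-free $\mathfrak{L}_{\rm val}$-expressible: in $(\mathbb{Q}(t),v_t)$ the elements $t$ and $1+t$ have the same quantifier-free $\mathfrak{L}_{\rm val}(\mathbb{Q})$-type (every $\mathbb{Q}$-polynomial in either lies in $\mathcal{O}$, and neither is algebraic), yet $t^{-1}\notin\mathcal{O}$ while $(1+t)^{-1}\in\mathcal{O}$. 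Equivalently, the quantifier-free $\mathfrak{L}_{\rm val}(C,c)$-type of $a$ determines the generated $\mathfrak{L}_{\rm val}$-substructure $(C[a],\mathcal{O}\cap C[a])$ but \emph{not} the valued field $(C(a),v|_{C(a)})$. If instead you substitute via the defining formula $\delta_d(y)$ and existentially quantify over $y$, you add one quantifier per constant used and leave the $\exists^n$-fragment. So the step you flag as ``routine'' is precisely the nontrivial point, and your proposed justification does not go through. (The paper's one-line proof is silent on this same point; note, however, that in the paper the forward direction of the corollary is only ever applied with all valuations trivial, where $\mathcal{O}$ is the whole field and your Lemma~\ref{lem:field_to_ring} argument is valid.)
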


\begin{proof}
This immediately follows from Proposition \ref{prop:E^n} for the language $\mathfrak{L}=\mathfrak{L}_{\rm val}(C)$, as already ${\rm Th}_{\exists^0}(K,v,C,a)$ determines the isomorphism type of $C(a)$.    
\end{proof}

\begin{lemma}\label{lem:claim}
Let $C$ be a field, 
let $K,L$ be extensions of $C$,
let $C'=\overline{C}^K$ be the relative algebraic closure of $C$ in $K$,
and let $n\geq 1$.
If ${\rm Th}_{\exists^n}(K,C)\subseteq{\rm Th}_{\exists^n}(L,C)$,
then there exists an embedding $\alpha\colon C'\rightarrow_C L$ such that
we have ${\rm Th}_{\exists^n}(K,C')\subseteq{\rm Th}_{\exists^n}(L,\alpha(C'))$.
\end{lemma}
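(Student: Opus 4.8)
The plan is to induct on $n$, using Corollary~\ref{cor:E^n} (or rather its underlying Proposition~\ref{prop:E^n}) to peel off one element of $C'$ at a time, together with a saturation/chain argument to handle all of $C'$ simultaneously. For the base case $n=1$: since $\exists^1=\exists_1$, the hypothesis ${\rm Th}_{\exists_1}(K,C)\subseteq{\rm Th}_{\exists_1}(L,C)$ gives, by Lemma~\ref{lem:E1}$(1)\Rightarrow(2a)$, an embedding $\alpha\colon\overline{C}^K\to L$ over $C$; and since $C'=\overline{C}^K$ is algebraic over $C$, any element $a\in C'$ satisfies, over $\alpha(C')$, exactly the same quantifier-free $\Lring(\alpha(C'))$-type as $\alpha(a)$ over $\alpha(C')$ does in $L$, so in fact ${\rm Th}_{\exists_1}(K,C')={\rm Th}_{\exists_1}(\alpha(C'),\alpha(C'))\subseteq{\rm Th}_{\exists_1}(L,\alpha(C'))$, using that $\alpha(C')\subseteq L$. (One should be slightly careful: $\exists_1$-sentences over $C'$ with a witness transcendental over $C'$ cannot occur, as $C'$ is relatively algebraically closed in $K$; if $K$ is infinite then $L$ is too by Lemma~\ref{lem:E1}(2b), so those are fine anyway.)

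For the inductive step, suppose the statement holds for $n$ and assume ${\rm Th}_{\exists^{n+1}}(K,C)\subseteq{\rm Th}_{\exists^{n+1}}(L,C)$. Enumerate $C'=\bigcup_i C_i$ as an increasing union of subfields $C_i$ each finitely (indeed: singly, by the primitive element theorem, since $C'/C$ is algebraic and we may take $C'/C$ separable after first passing to the perfect hull — or simply argue element by element) generated over $C$. Using Proposition~\ref{prop:E^n} with $\mathfrak{L}=\Lring(C)$ and an element $a\in C'$ generating $C_1/C$, we obtain $L\prec L^*$ and $b\in L^*$ with ${\rm Th}_{\exists^n}(K,C,a)\subseteq{\rm Th}_{\exists^n}(L^*,b)$; since $\exists^0$ already pins down the isomorphism type of $C(a)$ and $a$ is algebraic over $C$, $b$ generates a copy of $C_1$ over $C$ inside $L^*$, giving an embedding $\alpha_1\colon C_1\to_C L^*$ with ${\rm Th}_{\exists^n}(K,C_1)\subseteq{\rm Th}_{\exists^n}(L^*,\alpha_1(C_1))$. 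Now apply the inductive hypothesis: the relative algebraic closure of $C_1$ in $K$ is again $C'$ (as $C'$ is relatively algebraically closed in $K$ and $C_1\subseteq C'$), so we get an embedding $\beta_1\colon C'\to_{C_1}L^*$ with ${\rm Th}_{\exists^n}(K,C')\subseteq{\rm Th}_{\exists^n}(L^*,\beta_1(C'))$. The final embedding $\alpha\colon C'\to L$ is then extracted from $\beta_1$ (composed with $\alpha_1$ on $C_1$) by a standard pull-back argument: since $C'/C$ is algebraic, $\beta_1(C')\subseteq L^*$ is algebraic over $\beta_1(C_1)=\alpha_1(C_1)\subseteq L$, hence $\beta_1(C')$ lies in the relative algebraic closure of $L$ in $L^*$, which equals (the image in $L^*$ of) $\overline{L}\cap L^*$; but $L\prec L^*$ forces $\overline{L}^{L^*}$ to embed into $L$ over $L$ — more carefully, one uses that $\exists^n$-theories over $\alpha_1(C_1)$ can be transported back down from $L^*$ to $L$ because $L\prec L^*$ and $\alpha_1(C_1)\subseteq L$. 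Iterating over the chain $(C_i)$ and taking a union of elementary extensions (or working inside a single $|C'|^+$-saturated $L^*$ from the start to avoid the chain) yields $\alpha$ defined on all of $C'$ with the required inclusion of $\exists^n$-theories; but the lemma only asks for $\exists^n$, which is weaker than $\exists^{n+1}$, so no further bookkeeping is needed.

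The main obstacle I anticipate is the "descent" from the elementary extension $L^*$ back to $L$: Proposition~\ref{prop:E^n} genuinely produces a witness $b$ only in some $L^*\succ L$, and to land the final embedding $\alpha$ inside $L$ itself one must use that $b$ — and more generally the image of $C'$ — is \emph{algebraic} over (the image of) $C$, so that it is determined up to isomorphism over $C$ by quantifier-free data that $L$, being an elementary substructure, already realizes; combined with Lemma~\ref{lem:E1}(2a) applied over the successive $C_i$ this is what lets us replace $L^*$ by $L$. The cleanest packaging is probably to prove first, as a sub-claim, the "one-element" version — for $a\in C'$ there is $L\prec L^*$ and $\alpha_1\colon C(a)\to_C L$ (note: into $L$, not $L^*$, using algebraicity of $a$ over $C$ and $L\prec L^*$) with ${\rm Th}_{\exists^n}(K,C(a))\subseteq{\rm Th}_{\exists^n}(L^*,\alpha_1(C(a)))$ — and then glue these along the chain, with a final appeal to saturation (replacing $L$ by a suitably saturated elementary extension at the outset, which is harmless since the conclusion about $\exists^n$-theories is preserved under elementary extensions) to realize the compatible system of embeddings as a single $\alpha\colon C'\to L$.
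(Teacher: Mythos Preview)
There is a genuine gap in your inductive step. You assume the lemma for level $n$ and the hypothesis ${\rm Th}_{\exists^{n+1}}(K,C)\subseteq{\rm Th}_{\exists^{n+1}}(L,C)$, and you must conclude ${\rm Th}_{\exists^{n+1}}(K,C')\subseteq{\rm Th}_{\exists^{n+1}}(L,\alpha(C'))$. Your argument, however, only yields ${\rm Th}_{\exists^{n}}(K,C')\subseteq{\rm Th}_{\exists^{n}}(L^*,\beta_1(C'))$: you spend the outer quantifier via Proposition~\ref{prop:E^n} on an element $a\in C'$, dropping to level $\exists^n$ over $C(a)$, and then the inductive hypothesis (for level $n$) passes from $C(a)$ to its relative algebraic closure $C'$ \emph{still at level $\exists^n$}. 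You never recover the lost quantifier. Your closing remark that ``the lemma only asks for $\exists^n$'' is simply false at stage $n+1$ of the induction. The chain construction is also superfluous: a single application of the inductive hypothesis already produces $\beta_1$ defined on all of $C'$ (because $\overline{C_1}^K=C'$), so iterating over $C_1\subseteq C_2\subseteq\cdots$ gains nothing.

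The missing idea, which is what the paper does, is to peel off an \emph{arbitrary} $a\in K$, not just $a\in C'$. By Corollary~\ref{cor:E^n}, the $\exists^{n+1}$-theory of $(K,C')$ is governed by the $\exists^n$-theories of $(K,C'(a))$ as $a$ ranges over all of $K$. For such $a$, Corollary~\ref{cor:E^n} applied to the hypothesis gives $L\prec L^*$ and an embedding $\iota\colon C(a)\to L^*$ with $\exists^n$-transfer over $C(a)$; the inductive hypothesis with base field $C(a)$ (whose relative algebraic closure in $K$ is $C(a)'$) then upgrades this to an embedding $\alpha\colon C(a)'\to L^*$ with $\exists^n$-transfer over $C(a)'$; since $C'(a)\subseteq C(a)'$, restricting gives exactly the data needed for Corollary~\ref{cor:E^n} in the reverse direction, recovering level $\exists^{n+1}$ over $C'$. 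Your descent observation (that the image of $C'$ lands in $L$ because $C'/C$ is algebraic and $L$ is relatively algebraically closed in any $L^*\succ L$) is correct and is indeed the mechanism by which $\alpha|_{C'}$ ends up mapping into $L$ rather than merely into $L^*$.
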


\begin{proof}
The proof is by induction on $n$.
The case $n=1$ is immediate from Lemma \ref{lem:E1}, as $\overline{C'}^K=C'$.
    Now suppose the statement holds for $n\geq 1$.
    We want to use Corollary \ref{cor:E^n} (with all valuations trivial) to show that it holds for $n+1$, 
    so assume that ${\rm Th}_{\exists^{n+1}}(K,C)\subseteq{\rm Th}_{\exists^{n+1}}(L,C)$
    and let $a\in K$.
    By Corollary \ref{cor:E^n}, 
    there exists $L\prec L^*$ and an embedding
    $\iota\colon C(a)\rightarrow L^*$ such that
    ${\rm Th}_{\exists^{n}}(K,C(a))\subseteq{\rm Th}_{\exists^{n}}(L^*,\iota(C(a)))$.
    We identify $C(a)$ with $\iota(C(a))$ to assume without loss of generality that $\iota={\rm id}_{C(a)}$.
	The inductive hypothesis then gives that
	there exists an embedding $\alpha\colon C(a)'\rightarrow_{C(a)} L^{*}$,
    where $C(a)'$ is the relative algebraic closure of $C(a)$ in $K$,
    such that
    ${\rm Th}_{\exists^{n}}(K,C(a)')\subseteq{\rm Th}_{\exists^{n}}(L^*,\alpha(C(a)'))$.
	As $C'(a)\subseteq C(a)'$ and $a\in K$ was arbitrary, this shows that
	${\rm Th}_{\exists^{n+1}}(K,C')\subseteq{\rm Th}_{\exists^{n+1}}(L,\alpha(C'))$,
    by Corollary \ref{cor:E^n}.
\end{proof}

\begin{theorem}
    Let $(C,u)$ be a $\ZZ$-valued field of residue characteristic zero with uniformizer $\pi$.
Let $(K,v)$ and $(L,w)$
be henselian extensions of $(C,u)$ with uniformizer $\pi$,
and let $n\geq 0$.
If ${\rm Th}_{\exists^n}(Kv,Cu)\subseteq{\rm Th}_{\exists^n}(Lw,Cu)$,
then ${\rm Th}_{\exists^n}(K,v,C)\subseteq{\rm Th}_{\exists^n}(L,w,C)$.
\end{theorem}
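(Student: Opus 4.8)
The plan is to proceed by induction on $n$, using the characterization of $\exists^{n+1}$-inclusion provided by Corollary~\ref{cor:E^n} to reduce the case $n+1$ to the case $n$ after adjoining one element to the constants. For the base case $n=0$ (or $n=1$), note that $\exists^1$-sentences are just $\exists_1$-sentences, so this follows from the equicharacteristic-zero part of Theorem~\ref{thm:En} (see Remark~\ref{rem:char0}) applied with the $\exists_1$-fragment, or more directly from Lemma~\ref{lem:E1} combined with the standard fact that a henselian valued field of residue characteristic zero embeds into any sufficiently saturated henselian extension once the residue fields and value groups do. The real content is the inductive step.

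For the inductive step, assume the statement holds for $n$, suppose ${\rm Th}_{\exists^n}(Kv,Cu)\subseteq{\rm Th}_{\exists^n}(Lw,Cu)$ implies the conclusion whenever the hypotheses hold, and now assume ${\rm Th}_{\exists^{n+1}}(Kv,Cu)\subseteq{\rm Th}_{\exists^{n+1}}(Lw,Cu)$. By Corollary~\ref{cor:E^n}, to prove ${\rm Th}_{\exists^{n+1}}(K,v,C)\subseteq{\rm Th}_{\exists^{n+1}}(L,w,C)$ it suffices, for each $a\in K$, to produce an elementary extension $(L,w)\prec(L^*,w^*)$ and an embedding $\iota\colon(C(a),v)\rightarrow(L^*,w^*)$ over $C$ with ${\rm Th}_{\exists^n}(K,v,C(a))\subseteq{\rm Th}_{\exists^n}(L^*,w^*,\iota(C(a)))$. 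The idea is to first apply Corollary~\ref{cor:E^n} \emph{downstairs}, at the level of residue fields: since ${\rm Th}_{\exists^{n+1}}(Kv,Cu)\subseteq{\rm Th}_{\exists^{n+1}}(Lw,Cu)$, for the residue $\bar a = {\rm res}_v(a)$ (when $a\in\mathcal O_v$; otherwise replace $a$ by $\pi a$ or $a^{-1}$ as appropriate) there is $(Lw,Cu)\prec(M,Cu)$ and an embedding $Cu(\bar a)\rightarrow M$ over $Cu$ with ${\rm Th}_{\exists^n}(Kv,Cu(\bar a))\subseteq{\rm Th}_{\exists^n}(M,\ldots)$. One then needs to lift this to the valued-field level: set $C_1 := C(a)$, which is again $\ZZ$-valued (or trivially valued, if $v(a)=0$ and $a$ is a residue) with residue field $Cu(\bar a)$ or a finite extension thereof, realize $(M,\ldots)$ as the residue field of an elementary extension of $(L,w)$ via saturation, and invoke the Ax--Kochen--Ershov embedding machinery in residue characteristic zero to get $\iota\colon(C_1,v)\rightarrow(L^*,w^*)$ over $C$. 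The inductive hypothesis, applied with $C_1$ in place of $C$, then yields ${\rm Th}_{\exists^n}(K,v,C_1)\subseteq{\rm Th}_{\exists^n}(L^*,w^*,\iota(C_1))$, which is exactly what Corollary~\ref{cor:E^n} asks for.

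The main obstacle I expect is the bookkeeping around the subfield $C_1=C(a)$: it may fail to be $\ZZ$-valued with uniformizer $\pi$ (if $a$ has value not a multiple of $v(\pi)$ the value group $C_1 v$ can be a proper overgroup of $\ZZ v(\pi)$, or $a$ could be transcendental residually and $C_1 v = Cu(\bar a)$, or $a$ could generate a ramified or residually-algebraic extension), so the inductive hypothesis as literally stated — which requires a $\ZZ$-valued base with a fixed uniformizer — may not directly apply. The fix is to formulate the induction more flexibly: one should prove the statement for $(C,u)$ an arbitrary valued field of residue characteristic zero (not necessarily $\ZZ$-valued, not necessarily with a distinguished uniformizer), so that $C_1$ stays within the class; the residue-characteristic-zero Ax--Kochen--Ershov transfer does not need the rank-one or excellence hypotheses that the positive-characteristic arguments of Theorem~\ref{thm:En} required, which is precisely why Remark~\ref{rem:char0} applies. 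With that reformulation, each step is routine: Corollary~\ref{cor:E^n} handles the quantifier, Lemma~\ref{lem:E1} handles the algebraic part of the base case, and classical AKE handles the lifting of embeddings from residue fields (together with value groups, which here are handled by $\exists_1$-inclusion of ordered abelian groups as in the proof of Proposition~\ref{prop:main_mixed_characteristic}). The only genuinely delicate point is ensuring the value-group side transfers correctly when $a$ enlarges the value group, but since we are taking an elementary extension $(L^*,w^*)$ of $(L,w)$ we have room to realize any finitely generated extension of value groups, and $\exists$-inclusion of value groups is automatic here because $v(\pi)$ generates a convex subgroup over which everything is controlled.
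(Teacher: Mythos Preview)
Your overall inductive strategy via Corollary~\ref{cor:E^n} matches the paper's, but there is a genuine gap in the residue-field bookkeeping. You assert that $C_1=C(a)$ has residue field $Cu(\bar a)$ ``or a finite extension thereof''; this is false. Take $(C,u)=(\QQ(\!(t)\!),v_t)$, $(K,v)=(\QQ(x)(\!(t)\!),v_t)$ and $a=1+tx\in\mathcal{O}_v^\times$. Then $\bar a=1$, so $Cu(\bar a)=\QQ$, but $C(a)=C(tx)=C(x)$ has residue field $\QQ(x)$, a transcendental extension of $Cu(\bar a)$. Consequently the $\exists^n$-inclusion you obtain from Corollary~\ref{cor:E^n} at the residue level (with constants from $Cu(\bar a)$) is \emph{not} the hypothesis the inductive step needs (constants from $C(a)v$), and your proposed generalization of the base field does nothing to close this gap: you still have to produce an embedding of $C(a)v$, not merely of $Cu(\bar a)$, into the target residue field together with the required $\exists^n$-inclusion.

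The paper handles this with two preliminary reductions you omit. First it replaces $(K,v),(L,w),(C,u)$ by $(k(\!(t)\!),v_t),(l(\!(t)\!),v_t),(\kappa(\!(t)\!),v_t)$ using the full $\exists$-transfer available in residue characteristic zero (Remark~\ref{rem:char0}); this forces $vK=\ZZ$ (so every $C(a)$ is again $\ZZ$-valued with uniformizer $\pi$, disposing of the obstacle you identified) and makes $(C,v)$ maximal (so $a\notin C$ forces $C(a)v\neq\kappa$). Second, via Lemma~\ref{lem:claim} it arranges that $\kappa$ is relatively algebraically closed in $k$. Only then does the ruled residue theorem give $C(a)v=\kappa(\bar b)$ as a \emph{simple purely transcendental} extension of $\kappa$; one now applies Corollary~\ref{cor:E^n} at the residue level to $\bar b$ (not to $\bar a$), lifts to a Gauss extension $(C(b),v)\to(L^*,w^*)$, observes $a\in C(b)^h$, and invokes the inductive hypothesis with the $\ZZ$-valued base $(C(a),v)$. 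Your sketch has the right skeleton but misses precisely these reductions and the role of the ruled residue theorem in identifying the correct single generator of $C(a)v$.
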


\begin{proof}
Let $k=Kv$, $l=Lw$, $\kappa=Cu$ and 
note that the completion $(\hat{C},\hat{u})$ of $(C,u)$ is isomorphic to $(\ps{\kappa},v_t)$, 
via an isomorphism $\alpha\colon(\hat{C},\hat{u})\rightarrow(\ps{\kappa},v_t)$ with $\alpha(\pi)=t$ and which induces the identity on $\kappa$.
As $(\ps{k}v_t,\alpha(C)v_t)=(Kv,Cu)$,
we get
${\rm Th}_\exists(\ps{k},v_t,\alpha(C))={\rm Th}_\exists(K,v,C)$ 
(see e.g.~\cite[Proposition 4.11 and Remark 4.18]{ADF23} and Remark \ref{rem:char0}), 
so that we can assume without loss of generality that
$(K,v)=(\ps{k},v_t)$,
and similarly that
$(L,w)=(\ps{l},v_t)$, and $\pi=t$.
Moreover, then we can also assume that $(C,u)=(\hat{C},\hat{u})=(\ps{\kappa},v_t)$.

The  proof is now an induction on $n$.
The base case $n=0$ is trivial.
Next suppose that the statement holds for $n\geq0$,
and assume that 
\begin{equation}\label{eqn:n+1}
{\rm Th}_{\exists^{n+1}}(k,\kappa)\subseteq{\rm Th}_{\exists^{n+1}}(l,\kappa).
\end{equation}
Let $C'=\overline{C}^K$, $C''=\overline{C}^L$ be the relative algebraic closure of $C$ in $K$ respectively $L$.
Since
both
$k/\kappa$ and $l/\kappa$ are separable
	(as the characteristic is zero)
and both $(K,v)$ and $(L,w)$ are henselian,
$\kappa':=C'v$ is relatively algebraically closed in $k$, and $C''w$ is relatively algebraically closed in $l$.
By Lemma~\ref{lem:claim}, we can identify $\kappa'$ with a subextension of $l/\kappa$
such that \eqref{eqn:n+1} holds with $\kappa$ replaced by $\kappa'$.
Note that we then have $C'v\subseteq C''w$.
Extend $u$ to a valuation $\bar{u}$ on $\overline{C}$, and embed $(\overline{C},\bar{u})$ into $\overline{K}$ and $\overline{L}$ together with a choice of extension of $v$ respectively $w$.
Then the inertia field of $\bar{u}/u$ contains both $C'$ and $C''$,
as $K,L$ (and therefore $C',C''$) are unramified and separable extensions of the discretely valued field $C$,
cf.~\cite[Theorem 5.2.9(1) and Theorem 3.3.5]{EP}.
As there is an inclusion preserving bijection between the subextensions of the inertia field and the separable algebraic extensions of the residue field \cite[Theorem 5.2.7(2)]{EP}, and $C'v\subseteq C''w$,
we conclude that $C'\subseteq C''$, and so we can view $(C',v)$ as a valued subfield of $(L,w)$.
Since we already established that \eqref{eqn:n+1} holds for $\kappa'$ instead of $\kappa$,
we then can assume without loss of generality that $\kappa=\kappa'$ and $C=C'$,
and also that $(C,u)$ is again complete, since $(K,v)$ and $(L,w)$ are complete.
We can now apply the inductive hypothesis to \eqref{eqn:n+1} to conclude that
${\rm Th}_{\exists^n}(K,v,C)\subseteq{\rm Th}_{\exists^n}(L,w,C)$.

We want to use Corollary \ref{cor:E^n} to show that ${\rm Th}_{\exists^{n+1}}(K,v,C)\subseteq{\rm Th}_{\exists^{n+1}}(L,w,C)$, so let $a\in K$.
If $a\in C$, we can take $\iota={\rm id}_{C}$ and get
${\rm Th}_{\exists^n}(K,v,C(a))={\rm Th}_{\exists^n}(K,v,C)\subseteq{\rm Th}_{\exists^n}(L,w,C)=
{\rm Th}_{\exists^n}(L,w,\iota(C(a)))$, as desired.
So suppose that $a\notin C$.
Since $(K,v)/(C,v)$ is unramified and $(C,v)$ is complete and therefore maximal, the residue extension $C(a)v/Cv$ is nontrivial and,
as $Cv=\kappa$ is algebraically closed in $Kv=k$, therefore transcendental.
By the ruled residue theorem \cite{Ohm}, $C(a)v$ is a simple transcendental extension of a finite extension of $Cv$,
so again since $\kappa$ is algebraically closed in $k$, a simple transcendental extension of $\kappa$ itself, say 
$C(a)v=\kappa(\bar{b})$ for some $b\in C(a)$. 
	We observe that $(C(a),v)/(C(b),v)$ is immediate, and therefore so is $(C(a)^{h},v)/(C(b),v)$. 
	In particular $(C(b)^{h}(a),v)/(C(b)^{h},v)$ is immediate, which implies that $a\in C(b)^{h}$ since in residue characteristic zero henselian implies algebraically maximal
	\cite[Theorem 4.1.10]{EP}.
By Corollary \ref{cor:E^n} (applied with all valuations trivial), 
\eqref{eqn:n+1}
implies that there exists $l\prec l^*$ and an embedding $\bar{\iota}\colon \kappa(\bar{b})\rightarrow l^*$ over $\kappa$ such that
\begin{equation}\label{eqn:n}
 {\rm Th}_{\exists^{n}}(k,\kappa(\bar{b}))\subseteq{\rm Th}_{\exists^{n}}(l^*,\bar{\iota}(\kappa(\bar{b}))).
\end{equation}
Let $(L,w)\prec(L^*,w^*)$ be any elementary extension with $l^*\subseteq L^*w^*$
(for example, we can take $(L^*,w^*)=(l^*(\!(t)\!),v_t)$ by the classical Ax-Kochen--Ershov theorem, see e.g.~\cite[Theorem~4.3.2]{Ershov}).
Let $c\in L^*$ such that $\bar{c}=\bar{\iota}(\bar{b})$ and note that $\bar{c}$ is transcendental over $Cv$,
hence $c$ is transcendental over $C$,
and so $(C(c),w^*)$ is the Gauss extension of $(C,w)$
and therefore there exists an isomorphism $(C(b),v)\rightarrow(C(c),w^*)$ over $C$
\cite[Corollary 2.2.2]{EP},
which uniquely extends to an embedding
$\iota\colon(C(b)^h,v)\rightarrow(L^*,w^*)$.
Note that $C(a)v=\kappa(\bar{b})$ and $C(c)w^*=\kappa(\bar\iota(\bar{b}))$, and so 
\eqref{eqn:n}
together with the inductive hypothesis implies that
${\rm Th}_{\exists^n}(K,v,C(a))\subseteq{\rm Th}_{\exists^n}(L^*,w^*,\iota(C(a)))$,
which concludes the proof by Corollary~\ref{cor:E^n}.
\end{proof}

\section{Applications}

\noindent
We say that {\em resolution of singularities holds up to dimension $n$} if for every field $K$ and every $K$-variety $X$
(by which we mean an integral separated $K$-scheme of finite type)
with ${\rm dim}(X)\leq n$ there exists a regular $K$-variety $Y$ and a proper birational morphism $Y\rightarrow X$.
For fields of characteristic zero, this was famously proven for all $n$ by Hironaka.
For general fields, the currently best result is the following:

\begin{theorem}[Cossart--Piltant]
Resolution of singularities holds up to dimension $3$.
\end{theorem}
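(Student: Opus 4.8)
The plan is not to reprove this theorem but to recall the architecture of Cossart and Piltant's argument in \cite{CP}, which sits at the end of a long development (Zariski, Abhyankar, Hironaka, Cossart, and others). First, one reduces to the case of a base field $K$ of characteristic $p>0$: in characteristic zero the statement is Hironaka's theorem, recalled just above, which handles all dimensions at once. Next, by Noetherian induction and standard patching, one reduces the global statement to a \emph{local uniformization} problem: for a valuation $v$ on the function field of a $3$-dimensional $K$-variety $X$, produce a proper birational $X'\to X$ on which the center of $v$ is regular. Quasi-compactness of the Riemann--Zariski space yields finitely many such $X'$ dominating all valuations, and these must then be glued into a single \emph{regular} model $Y\to X$; already this globalization step goes well beyond Zariski's classical surface patching and is one of the genuinely new contributions of \cite{CP}. (Cossart--Piltant in fact work with arbitrary quasi-excellent schemes of dimension $\leq 3$, so the equicharacteristic and mixed-characteristic cases are treated uniformly.)

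The heart of the matter is local uniformization of an excellent local ring of dimension $\leq 3$ in characteristic $p$. One embeds the singularity locally into a regular ambient scheme and attaches to it Hironaka's \emph{characteristic polyhedron} together with a well-ordered tuple of numerical invariants (order and multiplicity, the combinatorics of the polyhedron, residual polynomials, and so on). The induction then consists in choosing, at each stage, a permissible center whose blow-up does not increase this invariant, and a finite sequence of such blow-ups that strictly decreases it; since the invariant takes values in a well-ordered set, the process terminates at a regular scheme. The restriction to dimension $3$ is exactly what keeps the combinatorics of the polyhedron, and the ensuing case analysis, finite and tractable.

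The main obstacle --- and the reason the positive-characteristic case resisted proof for decades after Hironaka --- is the \textbf{failure of maximal contact} in characteristic $p$: there need be no hypersurface of maximal contact through a singular point, so one cannot descend Hironaka's induction to a regular ambient scheme of smaller dimension, and the available substitutes (the tangent cone, Giraud's ridge) can behave badly, e.g.\ the multiplicity may fail to drop after blowing up the apparent worst locus (the ``kangaroo point'' phenomenon). Cossart and Piltant overcome this by a direct and highly technical study of how the characteristic polyhedron transforms under the relevant blow-ups, isolating in low dimension the finitely many obstructive configurations and eliminating each one; it is precisely this case analysis that makes \cite{CP} run to several hundred pages and that one would not attempt to carry out here. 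For the present paper only the consequence \Rfourfour\ of this theorem is needed, obtained from resolution in dimension $3$ as in \cite[Section 2]{ADF23}.
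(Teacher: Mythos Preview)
Your proposal and the paper take essentially the same approach: both defer to \cite{CP} rather than reproving anything. The paper's proof is a single sentence --- it observes that every $K$-variety is excellent \cite[Tag 07QW]{Stacks}, so the statement is a special case of \cite[Theorem 1.1]{CP} --- whereas you supply an extended (and broadly accurate) narrative of the architecture of Cossart--Piltant's argument. Nothing in your sketch is needed for the applications in this paper; the only logical content required is the bridge you mention in passing, namely that \cite{CP} applies to quasi-excellent schemes and $K$-varieties qualify.
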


\begin{proof}
This is a special case of \cite[Theorem 1.1]{CP},
as every $K$-variety is excellent \cite[Tag 07QW]{Stacks}.
\end{proof}

\begin{lemma}\label{prop}
Suppose that resolution of singularities holds up to dimension $n$.
Let $K$ be a large field, $F/K$ an extension, $v$ a valuation on $F$, trivial on $K$ and with $Fv=K$.
Let $T$ be an $\Lring(K)$-theory 
of field extensions of $K$
of which every intermediate field of $F/K$ that is relatively algebraically closed in $F$ is a model.
If $\varphi$ is an existential $\mathfrak{L}_{\rm ring}(K)$-sentence with ${\rm efd}_{T}(\varphi)\leq n$,
and $F\models\varphi$, then $K\models\varphi$.
\end{lemma}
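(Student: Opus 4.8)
The plan is to run the geometric argument behind \Rfour{} (as in \cite[Section~2]{ADF23} and \cite{Feh11}), with the essential fiber dimension hypothesis used to keep the relevant variety of dimension at most $n$, so that resolution of singularities up to dimension~$n$ suffices. Throughout, write $\varphi=\exists x_1,\dots,x_m\,\psi$ with $\psi$ quantifier-free.

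First I would reduce to a witness lying in a subextension of small transcendence degree. Since $F$ is relatively algebraically closed in $F$, it is a model of $T$, so from $F\models\varphi$ and ${\rm efd}_T(\varphi)\leq n$ we obtain a finitely generated subextension $F_0$ of $F/K$ with ${\rm trdeg}(F_0/K)\leq n$ such that $\varphi$ holds in every model of $T$ into which $F_0$ embeds over $K$ (finite generation by \cite[Lemma~4.3]{DDF}). Its relative algebraic closure $\overline{F_0}^F$ is a model of $T$ by hypothesis, so $\varphi$ holds there; choosing a witness $\bar a=(a_1,\dots,a_m)\in(\overline{F_0}^F)^m$ and setting $F_1:=F_0(\bar a)\subseteq F$, we obtain a finitely generated extension $F_1/K$ with ${\rm trdeg}(F_1/K)={\rm trdeg}(F_0/K)\leq n$ (as $\bar a$ is algebraic over $F_0$) in which $\psi(\bar a)$ holds.

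Next I would build and resolve a model of $F_1$. After the routine step of replacing finitely many generators of $F_1/K$ by their inverses, we may assume that they all lie in $\mathcal{O}_v$; let $R\subseteq\mathcal{O}_v$ be the $K$-subalgebra they generate, so that $X:={\rm Spec}\,R$ is an integral affine $K$-variety with function field $F_1$ and $\dim X={\rm trdeg}(F_1/K)\leq n$. By resolution of singularities up to dimension~$n$, fix a regular $K$-variety $Y$ together with a proper birational morphism $\rho\colon Y\to X$. Since $R\subseteq\mathcal{O}_v\cap F_1$ and $\rho$ is proper, the valuative criterion of properness provides a center $y\in Y$ of $v|_{F_1}$; its residue field $\kappa(y)$ embeds into the residue field $F_1 v$, and since $v$ is trivial on $K$ we have $K\subseteq F_1 v\subseteq Fv=K$, hence $F_1v=K$, while $K\subseteq\kappa(y)$ as $Y$ is a $K$-variety, so $\kappa(y)=K$ and $y\in Y(K)$. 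Being regular at the $K$-rational point $y$, the variety $Y$ is smooth over $K$ at $y$; therefore, as $K$ is large and $Y$ is integral with a smooth $K$-rational point, $Y(K)$ is Zariski dense in $Y$ (see \cite{BSFsurvey}). This paragraph is the delicate part: one genuinely needs a \emph{smooth} $K$-rational point on the \emph{resolved} model for largeness to apply, which is why the center of $v|_{F_1}$ must have residue field exactly $K$ — here the hypothesis $Fv=K$, not merely $Fv\supseteq K$, is essential — and resolution is available only because the first reduction, via ${\rm efd}_T(\varphi)\leq n$, has brought $\dim X$ down to at most $n$.

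Finally I would specialise the witness. Fix a disjunct of a disjunctive normal form of $\psi$ that $\bar a$ satisfies in $F_1=K(X)$: its equalities are polynomial identities on all of $X$, while its finitely many inequalities, together with the nonempty open locus where none of the finitely many generators that were inverted vanishes, cut out a nonempty open subset $D\subseteq X$. Then $\rho^{-1}(D)$ is a nonempty open subset of the irreducible variety $Y$, hence meets the Zariski dense set $Y(K)$; for any $y'\in\rho^{-1}(D)\cap Y(K)$, putting $x':=\rho(y')\in D(K)$ and evaluating the chosen generators of $F_1/K$ at $x'$ produces a tuple $\bar a'\in K^m$ that satisfies the same disjunct, so $K\models\psi(\bar a')$ and thus $K\models\varphi$, as desired.
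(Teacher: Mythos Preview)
Your proof is correct and follows essentially the same strategy as the paper's own proof: reduce via ${\rm efd}_T(\varphi)\leq n$ to a finitely generated subextension of transcendence degree at most $n$, build a variety model, resolve, locate the center of the restricted valuation as a smooth $K$-rational point, and invoke largeness. The only differences are in bookkeeping: the paper starts with a \emph{proper} (projective) model $X$ of the subextension $E$ and then replaces it by its resolution, so that the valuation is automatically centered on the resulting regular proper variety, whereas you work with an affine model and use the valuative criterion for the proper morphism $\rho\colon Y\to X$; and the paper concludes in one line via the equivalence ``$Y(K)$ Zariski dense $\Leftrightarrow K\prec_\exists K(Y)$'' (\cite[Fact~2.3]{Popsurvey}), while you unpack this by explicitly specializing a satisfying disjunct. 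Both variants are standard and equivalent; the paper's is a bit more streamlined, yours a bit more self-contained.
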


\begin{proof}
By the definition of essential fiber dimension
and \cite[Lemma 4.3]{DDF},
there exists a finitely generated subextension $K'/K$ of $F/K$ of transcendence degree at most $n$,
such that $M\models\varphi$ for every model $M$ of $T$ into which $K'$ embeds.
In particular, $\varphi$ holds in the relative algebraic closure $K''$ of $K'$ in $F$,
and therefore already in a finite subextension $E$ of $F/K'$.

As $E/K$ is finitely generated,
there exists a proper (e.g.~projective) $K$-variety $X$ with $K(X)=E$.
By the assumption of resolution of singularities up to dimension $n$,
we can assume without loss of generality that $X$ is regular.
Since $X$ is proper, $w:=v|_E$ is centered at a point $P$ of $X$, so the regular local ring $\mathcal{O}_{X,P}$ is dominated by the valuation ring $\mathcal{O}_w$ and therefore has residue field $K$.
Thus $P\in X(K)$, and therefore $P\in X_{\rm smooth}(K)$ \cite[Proposition 17.15.1]{EGAIV4}.
Since $K$ is large, this implies that $X(K)$ is Zariski-dense in $X$,
which in turn is equivalent to $K\prec_\exists E$ (cf.~\cite[Fact 2.3]{Popsurvey}).
Thus, $E\models\varphi$ implies $K\models\varphi$.
\end{proof}

\begin{proposition}\label{prp:R4n+1}
If resolution of singularities holds up to dimension $n$,
then \RfourNplusone\ holds.
\end{proposition}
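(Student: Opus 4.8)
Let $(E,u)$ be a valued field such that $u$ is trivial on a large subfield $F\subseteq E$ with $Eu=F$; the plan is to deduce $F\prec_{\exists_{n+1}}E$ from Lemma~\ref{prop}. By definition it suffices to show that $E$ and $F$ satisfy the same $\exists_{n+1}$-$\Lring(F)$-sentences. One inclusion is automatic, since any such sentence true in $F$ has its witnesses in $F\subseteq E$ and hence is true in $E$; so it remains to show that every single $\exists_{n+1}$-$\Lring(F)$-sentence $\varphi$ with $E\models\varphi$ also satisfies $F\models\varphi$.

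To prove this I would apply Lemma~\ref{prop} with $K:=F$ (large by hypothesis), with the extension $E/F$ and the valuation $u$ playing the roles of ``$F/K$'' and ``$v$'' there (note that $u$ is trivial on $F$ and $Eu=F$), and with $T:=\Sigma$ the $\Lring(F)$-theory of all field extensions of $F$. This $T$ satisfies the hypothesis of Lemma~\ref{prop} trivially: every intermediate field of $E/F$, in particular every one that is relatively algebraically closed in $E$, is a field extension of $F$ and hence a model of $\Sigma$. The remaining hypothesis of Lemma~\ref{prop} is that ${\rm efd}_\Sigma(\varphi)\le n$, and this is exactly Lemma~\ref{lem:efd}, since an $\exists_{n+1}$-sentence is in particular an $\exists_n\exists_1$-sentence. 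As resolution of singularities is assumed to hold up to dimension $n$, Lemma~\ref{prop} then gives that $E\models\varphi$ implies $F\models\varphi$, which finishes the argument.

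I do not expect a genuine obstacle: the real content — resolution of singularities, largeness of $F$ (used through centering a valuation at a smooth point together with Zariski density of rational points on large fields), and the essential fibre dimension calculus — is already packaged in Lemma~\ref{prop} and Lemma~\ref{lem:efd}, and the only new ingredient is the observation that the theory $T$ in Lemma~\ref{prop} may be taken to be the full theory of field extensions of $F$. The one point requiring a remark is the degenerate case $n=0$, for which Lemma~\ref{lem:efd} is stated only for $n\ge1$: there the assertion becomes $(\mathrm{R}4_1)$, which holds for \emph{any} large $F$ with no resolution hypothesis, since $F$ is relatively algebraically closed in $E$ (a standard consequence of $u$ being trivial on $F$ with $Eu=F$) and $F$ is infinite, so that Lemma~\ref{lem:E1} gives ${\rm Th}_{\exists_1}(E,F)={\rm Th}_{\exists_1}(F,F)$. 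Hence one may assume $n\ge1$.
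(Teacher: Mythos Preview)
Your proposal is correct and follows exactly the paper's approach: take $T$ to be the theory of all field extensions of $F$, invoke Lemma~\ref{lem:efd} to bound the essential fibre dimension of an $\exists_{n+1}$-sentence by $n$, and conclude via Lemma~\ref{prop}. Your explicit treatment of the degenerate case $n=0$ via Lemma~\ref{lem:E1} is a nice addition that the paper's proof leaves implicit (Lemma~\ref{lem:efd} is only stated for $n\geq 1$).
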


\begin{proof}
Let $E/F$ be as in the statement of \Rfour,
and let $\varphi$ be an $\exists_{n+1}$-$\Lring(F)$-sentence,
so in particular $\varphi$ is an $\exists_{n}\exists_{1}$-$\Lring(F)$-sentence.
	Let $T$ be the $\Lring(F)$-theory of field extensions of $F$.
By Lemma \ref{lem:efd},
${\rm efd}_{T}(\varphi)\leq n$
so the claim follows from Lemma~\ref{prop}.
\end{proof}

\begin{corollary}\label{cor:E4}
\Rfourfour\ holds.
\end{corollary}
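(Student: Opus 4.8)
The plan is to read the result straight off the two statements that immediately precede it. The Cossart--Piltant theorem tells us that resolution of singularities holds up to dimension $3$, and the Proposition just above says that whenever resolution of singularities holds up to dimension $n$, the assumption \RfourNplusone\ holds. So first I would invoke Cossart--Piltant with $n=3$; then I would apply that Proposition to conclude that \RfourNplusone\ holds for $n=3$, i.e.\ that \Rfourfour\ holds.

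The single thing that needs care is the index bookkeeping: \Rfourfour\ is by definition \Rfour\ for the $\exists_4$-fragment, which is exactly the $n=3$ instance of \RfourNplusone\ (since $n+1=4$ when $n=3$). No new argument is required beyond this transcription, because all the content already sits in the chain Lemma~\ref{lem:efd} (every $\exists_{n+1}$-$\Lring(F)$-sentence has essential fiber dimension at most $n$ over the theory of field extensions of $F$) $\to$ Lemma~\ref{prop} (existential sentences of essential fiber dimension at most $n$ descend along the residue map of a valuation that is trivial on a large subfield $K$ with residue field $K$, the point being that resolution of singularities up to dimension $n$ produces a regular model whose valuative center is a smooth $K$-rational point, whence $K\prec_\exists E$) $\to$ the preceding Proposition, which simply feeds the efd bound from Lemma~\ref{lem:efd} into Lemma~\ref{prop}.

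Accordingly I expect no genuine obstacle in the corollary itself: the only deep ingredient is the Cossart--Piltant theorem, which we use as a black box, and the only thing one can get wrong is the off-by-one in the subscript. The proof is therefore essentially one line, ``combine the Cossart--Piltant theorem with the previous Proposition, taking $n=3$.''
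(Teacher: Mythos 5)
Your proof is correct and is exactly the one the paper intends: the corollary follows immediately by combining the Cossart--Piltant theorem (resolution of singularities up to dimension $3$) with the preceding Proposition at $n=3$, and you have handled the $n\mapsto n+1$ index shift correctly.
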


\begin{corollary}\label{cor:applications}
The following theories are decidable:
\begin{enumerate}
	\item
		${\rm Th}_{\exists_3}(\ps{\FF_{q}},v_t,t)$
		and
		${\rm Th}_{\exists_3}(\ps{\overline{\FF_{q}}},v_t,t)$
	\item
		${\rm Th}_{\exists_4}(\ps{\FF_{q}},t)$
		and
		${\rm Th}_{\exists_4}(\ps{\overline{\FF_{q}}},t)$
	\item
		${\rm Th}_{\exists_3\exists_1}(\ps{\FF_{q}},t)$
		and
		${\rm Th}_{\exists_3\exists_1}(\ps{\overline{\FF_{q}}},t)$
\end{enumerate}
\end{corollary}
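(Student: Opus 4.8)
\textbf{Proof proposal for Corollary~\ref{cor:applications}.}
The plan is to deduce each decidability statement from the corresponding monotonicity theorem together with the unconditional instance \Rfourfour\ furnished by Corollary~\ref{cor:E4}, following the ``(mon) $\Rightarrow$ axiomatization $\Rightarrow$ decidability'' strategy of \cite{ADF23} and \cite{AFfragments}. First I would fix the role of the base field: take $(C,u)=(\ps{\FF_p},v_t)$ with uniformizer $\pi=t$, whose valuation ring is a complete discrete valuation ring and hence excellent, so that the hypotheses on $(C,u)$ in Theorems~\ref{thm:En} and \ref{thm:ENE1} are met; the residue field is $\FF_p$, which is perfect, so every residue extension appearing below is automatically separable.

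For part (1), monotonicity is Theorem~\ref{thm:En}(\ref{case2}) with $n=3$, which under \Rfourfour\ (Corollary~\ref{cor:E4}) gives: whenever $(K,v),(L,w)$ are henselian extensions of $(C,u)$ with uniformizer $t$ and $\Th_{\exists_3}(Kv,\FF_p)\subseteq\Th_{\exists_3}(Lw,\FF_p)$, then $\Th_{\exists_3}(K,v,C)\subseteq\Th_{\exists_3}(L,w,C)$. Applying this with $Kv=Lw=\FF_q$ (resp.\ $\overline{\FF_q}$), whose $\exists_3$-theory over $\FF_p$ is decidable (it is the theory of a fixed finite, resp.\ of the algebraically closed, field extension of $\FF_p$, both of which have decidable first-order theory), one gets that $\Th_{\exists_3}(\ps{\FF_q},v_t,t)$ is \emph{determined} by the decidable data $\Th_{\exists_3}(\FF_q,\FF_p)$ among all $(K,v,t)\models\Hepi$ extending $(C,u)$ with residue field having the same $\exists_3$-theory; combining this with the recursive axiomatization of $\Hepi$ and of the residue condition, and with Proposition~\ref{prop:val_to_ring} to pass between $\Lval(\varpi,C)$ and $\Lring(\varpi,C)$ effectively, yields a recursive axiomatization of $\Th_{\exists_3}(\ps{\FF_q},v_t,t)$, hence its decidability. (I would spell this out precisely by invoking the axiom (mon) of \cite[Definition 2.16]{AFfragments} and the attendant decidability transfer, exactly as in \cite{AFfragments}; the two cases $\FF_q$ and $\overline{\FF_q}$ are handled identically.) Part (2) is the same argument run through Theorem~\ref{thm:En}(\ref{case1}) with $n=4$, which under \Rfourfour\ gives $\Th_{\exists_4}(K,C)\subseteq\Th_{\exists_4}(L,C)$ in the language of rings with the parameter $t$, and then $\Th_{\exists_4}(\ps{\FF_q},t)$ (resp.\ $\ps{\overline{\FF_q}}$) is pinned down by the decidable $\Th_{\exists_4}(\FF_q,\FF_p)$ as above. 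Part (3) is identical but uses Theorem~\ref{thm:ENE1} with $n=3$ (so the hypothesis is \Rfourfour\ again), transferring the decidable $\Th_{\exists_3\exists_1}(\FF_q,\FF_p)$; here Proposition~\ref{prop:efd} ensures the $\exists_3\exists_1$-theory over $\FF_p$ is as computable as the $\exists_4$-theory, and since $\ps{\FF_q}/\FF_p((t))$ is separable the proposition also applies on the valued field side when we want the final theory in the field language with the parameter $t$.

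The main obstacle I expect is bookkeeping rather than mathematics: assembling the monotonicity statement into an \emph{explicit recursive axiomatization} and then into a decision procedure. Concretely, one must (i) verify that the class of models of $\Hepi$ extending $(\ps{\FF_p},v_t)$ with uniformizer $t$ is itself recursively axiomatizable over $\Lval(\varpi,C)$ --- this is routine since ``$v(t)$ is the least positive value'' and henselianity are first-order and $\ps{\FF_p}$ embeds as a recursive structure; (ii) check that the residue map is existentially definable in a way compatible with the fragment, which is exactly Lemma~\ref{lem:def} and Proposition~\ref{prop:val_to_ring} with their effectivity remark; and (iii) confirm that ``$\Th_{\exists_n}(Kv,\FF_p)$ equals the decidable target'' can be expressed by a recursive set of sentences about the residue sort, which reduces to decidability of $\Th(\FF_q)$ and of $\Th(\overline{\FF_q})$ over $\FF_p$. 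Once these are in place, decidability is formal: a sentence lies in the target existential fragment of $\ps{\FF_q}$ iff it is provable from the recursive axiomatization, and semidecidability from both sides gives the decision procedure. I would therefore structure the write-up as: state the axiomatization as a displayed list of axiom schemes, prove it axiomatizes the relevant existential theory using the appropriate part of Theorem~\ref{thm:En} or \ref{thm:ENE1} together with Corollary~\ref{cor:E4}, and conclude decidability, remarking (as the paper already flags in Remark~\ref{rem:DS}) that for the pure valued-field reducts without the parameter $t$ this recovers and sharpens earlier results.
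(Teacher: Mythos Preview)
Your overall strategy is exactly the paper's: invoke Corollary~\ref{cor:E4} to get \Rfourfour, apply Theorem~\ref{thm:En}(1a), \ref{thm:En}(1b) and Theorem~\ref{thm:ENE1} with $n=4,3,3$ respectively, and convert the resulting monotonicity into a recursive axiomatization by enumerating proofs from $\Hepi$, the diagram of the parameter field, and the decidable residue theory, as in \cite[Theorem~4.12]{ADF23}.

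There is, however, one concrete error in your setup that would break the argument as written: you take $(C,u)=(\ps{\FF_p},v_t)=(\FF_p(\!(t)\!),v_t)$. This field is uncountable, so your claim that ``$\ps{\FF_p}$ embeds as a recursive structure'' is false, and the quantifier-free diagram of $(C,u)$ is not recursive. Moreover, the sentences in Corollary~\ref{cor:applications} are in the language $\Lval(\varpi)$ (resp.\ $\Lring(\varpi)$) with the single constant $\varpi$ interpreted as $t$; expanding by constants for all of $\FF_p(\!(t)\!)$ would give a strictly larger language than the one in the statement. The paper instead takes $(C,u)=(\FF_p(t),v_t)$: this is the subfield generated by the parameter $t$ over the prime field, so $\Lval(C)$-sentences correspond exactly to $\Lval(\varpi)$-sentences, its valuation ring $\FF_p[t]_{(t)}$ is excellent (as a localization of a finite type $\FF_p$-algebra), and $(\FF_p(t),v_t)$ is a computable valued field with recursive quantifier-free diagram. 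With this single correction (replace $\ps{\FF_p}$ by $\FF_p(t)$ throughout), your proof goes through and coincides with the paper's.
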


\begin{proof}
This follows by a standard argument 
from Theorems \ref{thm:En}(1) and \ref{thm:ENE1} 
by enumerating proofs
from the recursive axioms of $\Hepi$
together with the quantifier-free diagram of 
the computable valued field $(C,u)=(\mathbb{F}_p(t),v_t)$,
and the decidable (existential/universal) $\mathfrak{L}_{\rm ring}(\mathbb{F}_p)$-theory of the residue field $\mathbb{F}_q$ respectively $\overline{\mathbb{F}_q}$,
see for example the proof of \cite[Theorem 4.12]{ADF23}.
	For an account of this standard argument in an abstract setting, 
	we refer the reader to~\cite[\S2]{AFfragments}.
\end{proof}

\begin{remark}\label{rem:DS}
It seems plausible that Corollary \ref{cor:applications}(2) can be deduced (again using \cite{CP}) without much effort from \cite{DS}, 
as the algorithm given there (see also \cite[Section 3]{ADF23} for a discussion and some corrections) needs resolution of singularities only for subvarieties of the variety described by the input sentence.
However, it is less clear how to obtain  Corollary~\ref{cor:applications}(1) this way,
and our approach 
gives not only (relative) decidability but an axiomatization and hence understanding of the theory, from which one can moreover deduce for example further model theoretic consequences (as in \cite{ADF23}) as well as results uniform across residue theories (as in \cite{AFfragments}).
\end{remark}

\begin{corollary}
Let $(K,v)$ be a henselian valued field with uniformizer $p$ and residue field $k=\mathbb{F}_p(\!(t)\!)$, e.g.~the quotient field of the Cohen--Witt ring of $k$, cf.~\cite[\S7]{AJ22}, and let $s\in K$ be an element with residue $t$.
Then $\Th_{\exists_{4}}(K,v,s)$ is decidable.
\end{corollary}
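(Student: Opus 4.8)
The plan is to reduce the decidability of $\Th_{\exists_4}(K,v,s)$ to the already-established Corollary~\ref{cor:applications}(2), namely the decidability of $\Th_{\exists_4}(\ps{\FF_p(\!(t)\!)},t')$, by showing that these two $\exists_4$-theories coincide (up to renaming the distinguished constant). The key point is that the hypotheses match those of Theorem~\ref{thm:En}(2c): we have a mixed characteristic $\ZZ$-valued base field, the uniformizer is $p$, and the residue field is $k=\ps{\FF_p}$, which is itself a henselian valued field but here enters only as an $\Lring$-structure.

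First I would take $(C,u)$ to be a $\ZZ$-valued subfield of $(K,v)$ with uniformizer $p$ and residue field $Cu$ as large as convenient; the natural choice is to let $(C,u)$ be (isomorphic to) the quotient field of the Cohen--Witt ring $W(k)$ sitting inside $K$ via the given data, so that $Cu=k=\ps{\FF_p}$ and $s\in C$ with $\bar s=t$. Since $k$ is perfect-free but in any case $k/k$ is trivially separable, the separability hypothesis $Kv/Cu$ is automatic once $K$ is taken with $(C,u)=(K,v)$ itself — indeed the cleanest route is to apply Theorem~\ref{thm:En}(2c) twice, once with $(K,v)=(L,w)$ being the given field and $(C,u)$ its Cohen--Witt subfield, to reduce to the complete case, and then to the model case where the ambient field is literally the quotient field of $W(k)$. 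Then I would separately apply the same machinery to the ``standard'' mixed characteristic model with residue field $k$ and to check that all such fields have the same $\exists_4$-theory with constant $s$ as $\ps{k}$ with constant $t$ — this last identification uses the equicharacteristic-zero transfer, i.e. Theorem~\ref{thm:En}(1) with residue characteristic zero (Remark~\ref{rem:char0}), applied with residue field $k$ over the base $(\FF_p(t),v_t)$, exactly as in the proof of Corollary~\ref{cor:applications}.

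More precisely, I would argue: by Corollary~\ref{cor:applications}(2), $\Th_{\exists_4}(\ps{\FF_p(\!(t)\!)},v_{t'},t')$ is decidable, where $t'$ is the outer uniformizer and the residue field is $\FF_p(\!(t)\!)$ — but wait, this is the wrong residue field; rather I should use that Corollary~\ref{cor:applications}(2) gives decidability of $\Th_{\exists_4}(\ps{\FF_q},t)$, and I instead need the analogue with residue field $\FF_p(\!(t)\!)$ in \emph{mixed} characteristic. So the actual plan is: take $(C,u)=(\FF_p(t),v_t)$ but now build the mixed-characteristic $\ZZ$-valued field $(C_0,u_0)$ with uniformizer $p$ and residue field $\FF_p$ — i.e.\ $(\QQ_p,v_p)$ — no, the residue field must be $k=\FF_p(\!(t)\!)$. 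Thus $(C_0,u_0) = \mathrm{Quot}(W(\FF_p(\!(t)\!)))$ with its $p$-adic valuation, which is $\ZZ$-valued, unramified, with residue field $k$, and contains an element $s$ with $\bar s = t$; set $(C,u)$ to be the $\ZZ$-valued subfield $\FF_p(t)$ — no: $\FF_p(t)$ has characteristic $p$, not mixed. The correct base is $(C,u) = (\mathrm{Quot}(W(\FF_p(t))), v_p)$, a computable mixed-characteristic $\ZZ$-valued field with residue field $\FF_p(t)$ and an element $s$ with $\bar s = t$. Then Theorem~\ref{thm:En}(2c) applied to $(K,v) = (L,w) = (K,v)$ over $(C,u)$, using that $\Th_{\exists_4}(Kv,Cu) = \Th_{\exists_4}(k, \FF_p(t))$ is decidable (it is the $\exists_4$-theory of $\ps{\FF_p}$ with a constant for $t$ over $\FF_p(t)$, decidable by Corollary~\ref{cor:applications} with $q=p$ and identifying constants), yields that $\Th_{\exists_4}(K,v,C)$ is determined by decidable data over the recursive axioms of the mixed-characteristic analogue of $\Hepi$; the decidability then follows by the same proof-enumeration argument as in Corollary~\ref{cor:applications}, noting that the quotient field of the Cohen--Witt ring is a specific computable field and its quantifier-free diagram can be enumerated.

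The hard part will be making sure all the bookkeeping around constants and base fields is consistent: Theorem~\ref{thm:En}(2c) transfers inclusions $\Th_{\exists_n}(Kv,Cu)\subseteq\Th_{\exists_n}(Lw,Cu)$ to $\Th_{\exists_n}(K,v,C)\subseteq\Th_{\exists_n}(L,w,C)$, so to get \emph{decidability} of a single theory one runs the usual argument: enumerate consequences of (i) the recursive axioms for henselian mixed-characteristic valued fields with uniformizer $p$, (ii) the quantifier-free diagram of the computable base $(C,u)=(\mathrm{Quot}(W(\FF_p(t))),v_p,s)$, and (iii) the decidable $\exists_4$/$\forall_4$-theory of the residue field $k=\ps{\FF_p}$ over $\FF_p(t)$ — this last being decidable by Corollary~\ref{cor:applications}(2) (with $q=p$). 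The monotonicity Theorem~\ref{thm:En}(2c) guarantees that any $\exists_4$-sentence true in $(K,v,s)$ is forced already by this data, so the enumeration terminates; this is verbatim the structure of the proof of Corollary~\ref{cor:applications}, with $(1)$ the mixed-characteristic axioms replacing the equicharacteristic ones and no resolution-of-singularities hypothesis needed since Theorem~\ref{thm:En}(2) is unconditional in mixed characteristic.
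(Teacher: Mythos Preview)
Your overall strategy---apply Theorem~\ref{thm:En}(\ref{case3}) over a mixed-characteristic $\ZZ$-valued base $(C,u)$ with $Cu=\FF_p(t)$, reduce to the decidability of $\Th_{\exists_4}(\ps{\FF_p},\FF_p(t))$, invoke Corollary~\ref{cor:applications}(2), and then enumerate proofs---is exactly the paper's approach. The gap is in your choice of $(C,u)$.

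You eventually settle on $(C,u)=(\mathrm{Quot}(W(\FF_p(t))),v_p)$. Two problems. First, $\FF_p(t)$ is not perfect, so $W(\FF_p(t))$ is not a discrete valuation ring; you presumably mean the Cohen ring of $\FF_p(t)$. Second, and more seriously, Theorem~\ref{thm:En} requires $(K,v)$ to be an \emph{extension} of $(C,u)$, and nothing in the hypotheses ensures that the complete Cohen ring of $\FF_p(t)$---let alone of $k$---embeds in the merely henselian $(K,v)$. Your earlier attempt to take the Cohen--Witt ring of $k=\ps{\FF_p}$ as a subfield of $K$ has the same defect. You also assert computability of the quantifier-free diagram of this base without argument, and for Cohen rings of imperfect residue fields this is not obvious.

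The paper avoids all of this by taking $(C,u)=(\QQ(s),v|_{\QQ(s)})$. This is automatically a subfield of $K$ (as $\mathrm{char}(K)=0$ and $s\in K$), and because $\bar s=t$ is transcendental over $\FF_p$, the restriction $u$ is the Gauss extension of $v_p$ \cite[Corollary~2.2.2]{EP}: hence $\ZZ$-valued with uniformizer $p$ and $Cu=\FF_p(t)$. The quantifier-free diagram of $(\QQ(s),u)$ is decidable precisely because the Gauss valuation is explicit. With this $(C,u)$, the separability hypothesis $Kv/Cu=\ps{\FF_p}/\FF_p(t)$ holds, and your enumeration argument goes through verbatim.
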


\begin{proof}
 Let $(C,u)=(\mathbb{Q}(s),v)$ and note that since $\bar{s}=t$ is transcendental, $u$ is the Gauss extension of $v_p$ \cite[Corollary 2.2.2]{EP}, in particular $Cu=\mathbb{F}_p(t)$.
	The claim now follows from Theorem~\ref{thm:En}(\ref{case3}), again 
	by enumerating proofs from axioms of the  
	$\Lval$-theory of unramified henselian valued fields $(L,w)$ of mixed characteristic together with the quantifier-free diagram of $(C,u)$
    (which is decidable since $u$ is the Gauss extension of $v_p$),
    and axioms that state that ${\rm Th}_{\exists_4}(Lw,Cu)={\rm Th}_{\exists_4}(\ps{\FF_{p}},\mathbb{F}_p(t))$
    (which is decidable by Corollary~\ref{cor:applications}(2)).
\end{proof}

\section*{Acknowledgements}

\noindent
The authors would like to extend their sincere thanks to
Franz-Viktor Kuhlmann for his suggestion and encouragement to use the desingularization of threefolds from \cite{CP} to obtain unconditional results about existential fragments.
Thanks are also due to Philip Dittmann, Franz-Viktor Kuhlmann, and to the referee for many helpful comments on earlier versions.

Most of this research was done while the authors were participating in the trimester on ``Definability, decidability, and computability'' at the Hausdorff Institute Bonn, funded by the Deutsche Forschungsgemeinschaft (DFG, German Research Foundation) under Germany‘s Excellence Strategy – EXC-2047/1 – 390685813.
S.~A.\ and A.~F.\ were supported by the Institut Henri Poincaré.
S.~A.\ was also 
supported by
the ANR-DFG project ``AKE-PACT'' (ANR-24-CE92-0082)
and
by ``Investissement d'Avenir'' launched by the French Government and implemented by ANR (ANR-18-IdEx-0001) as part of its program ``Emergence''.

\end{document}